\newtheorem{theorem}{Theorem}[section]
\newtheorem{lemma}[theorem]{Lemma}
\newtheorem{proposition}[theorem]{Proposition}
\theoremstyle{definition}
\newtheorem{definition}[theorem]{Definition}
\newtheorem{remark}[theorem]{Remark}
\numberwithin{equation}{section}
\begin{document}

\title[Heisenberg uniqueness pair]
{Heisenberg uniqueness pairs for some algebraic curves in the plane}

\author{Deb Kumar Giri and R. K. Srivastava}

\address{Department of Mathematics, Indian Institute of Technology, Guwahati, India 781039.}
\email{deb.giri@iitg.ernet.in, rksri@iitg.ernet.in}

\subjclass[2000]{Primary 42A38; Secondary 44A35}

\date{\today}

\keywords{Bessel function, convolution, Fourier transform.}

\begin{abstract}
A Heisenberg uniqueness pair is a pair $\left(\Gamma, \Lambda\right)$, where $\Gamma$
is a curve and $\Lambda$ is a set in $\mathbb R^2$ such that whenever a finite Borel
measure $\mu$ having support on $\Gamma$ which is absolutely continuous with respect to
the arc length on $\Gamma$ satisfies $\hat\mu\vert_\Lambda=0,$ then it is identically $0.$
In this article, we investigate the Heisenberg uniqueness pairs corresponding to the spiral,
hyperbola, circle and certain exponential curves. Further, we work out a characterization of
the Heisenberg uniqueness pairs corresponding to four parallel lines. In the latter case,
we observe a phenomenon of interlacing of three trigonometric polynomials.
\end{abstract}

\maketitle

\section{Introduction}\label{section1}
The concept of the Heisenberg uniqueness pair has been first introduced in an
influential article by Hedenmalm and Montes-Rodríguez (see \cite{HR}). We would
like to mention that Heisenberg uniqueness pair up to a certain extent is similar
to an annihilating pair of Borel measurable sets of positive measure as described by
Havin and Joricke \cite{HJ}. Further, the notion of Heisenberg uniqueness pair has
a sharp contrast to the known results about determining sets for measures by
Sitaram et al. \cite{BS, RS}, due to the fact that the determining set $\Lambda$ for
the function $\hat\mu$ has also been considered a thin set.

\smallskip

In addition, the question of determining the Heisenberg uniqueness pair for a
class of finite measures has also a significant similarity with the celebrated
result due to M. Benedicks (see\cite{B}). That is, support of a function $f\in L^1(\mathbb R^n)$
and its Fourier transform $\hat f$ cannot be of finite measure simultaneously.
Later, various analogues of the Benedicks theorem have been investigated in
different set ups, including the Heisenberg group and Euclidean motion groups
(see \cite{NR, PS, SST}).

\smallskip

In particular, if $\Gamma$ is compact, then $\hat\mu$ is a real analytic function
having exponential growth and it can vanish on a very delicate set. Hence
in this case, finding the Heisenberg uniqueness pairs becomes little easier.
However, this question becomes immensely difficult when the measure is supported
on a non-compact curve. Eventually, the Heisenberg uniqueness pair is a natural
invariant to the theme of the well studies uncertainty principle for the Fourier
transform.

\smallskip

In the article \cite{HR}, Hedenmalm and Montes-Rodríguez have shown that the pair
(hyperbola, some discrete set) is a Heisenberg uniqueness pair. As a dual problem,
a weak$^\ast$ dense subspace of $L^{\infty}(\mathbb R)$  has been constructed to
solve the Klein-Gordon equation. Further, a complete characterization of the
Heisenberg uniqueness pairs corresponding to any two parallel lines has been
given by Hedenmalm and Montes-Rodríguez (see \cite{HR}).

\smallskip

Afterward, a considerable amount of work has been done pertaining to the
Heisenberg uniqueness pair in the plane as well as in the higher dimensional
Euclidean spaces.

\smallskip

Recently, N. Lev \cite{L} and P. Sjolin \cite{S1} have independently shown that
circle and certain system of lines are HUP corresponding to the unit circle $S^1.$
Further, F. J. Gonzalez Vieli \cite{V1} has generalized HUP corresponding to circle
in the higher dimension and shown that a sphere whose radius does not lie in the zero
set of the Bessel functions $J_{(n+2k-2)/2};~k\in\mathbb Z_+,$ the set of non-negative
integers, is a HUP corresponding to the unit sphere $S^{n-1}.$

\smallskip

Per Sjolin \cite{S2} has investigated some of the Heisenberg uniqueness pairs
corresponding to the parabola. Subsequently, D. Blasi Babot \cite{Ba} has given a
characterization of the Heisenberg uniqueness pairs corresponding to a certain
system of three parallel lines. However, an exact analogue for the finitely many
parallel lines is still open.

\smallskip

 In a major development, P. Jaming and K. Kellay \cite{JK} have given a unifying proof
for some of the Heisenberg uniqueness pairs corresponding to the hyperbola, polygon,
ellipse and graph of the functions $\varphi(t)=|t|^\alpha,$ whenever $\alpha>0,$ via
dynamical system approach.

\smallskip

Let $\Gamma$ be a finite disjoint union of smooth curves in $\mathbb R^2.$
Let $X(\Gamma)$ be the space of all finite complex-valued Borel measure
$\mu$ in $\mathbb R^2$ which is supported on $\Gamma$ and absolutely
continuous with respect to the arc length measure on $\Gamma$. For
$(\xi,\eta)\in\mathbb R^2,$ the Fourier transform of $\mu$ is defined by
\[\hat\mu{( \xi,\eta)}=\int_\Gamma e^{-i\pi(x\cdot\xi+ y\cdot\eta)}d\mu(x,y).\]
In the above context, the function $\hat\mu$ becomes a uniformly continuous bounded
function on $\mathbb R^2.$ Thus, we can analyze the pointwise vanishing nature of the
function $\hat\mu.$
\begin{definition}
Let $\Lambda$ be a set in $\mathbb R^2.$ The pair $\left(\Gamma, \Lambda\right)$
is called a Heisenberg uniqueness pair for $X(\Gamma)$ if any $\mu\in X(\Gamma)$
satisfying $\hat\mu\vert_\Lambda=0,$ implies $\mu=0.$

\end{definition}
Since the Fourier transform is invariant under translation and rotation, one can
easily deduce the following invariance properties about the Heisenberg uniqueness
pair.
\smallskip

\begin{enumerate}[(i)]
  \item Let $u_o, v_o\in\mathbb R^2.$ Then the pair $(\Gamma,\Lambda)$
is a HUP if and only if the pair $(\Gamma+ u_o,\Lambda+v_o)$ is a HUP.

\smallskip

\item Let $T : \mathbb R^2\rightarrow \mathbb R^2$ be an invertible
linear transform whose adjoint is denoted by $T^\ast.$  Then  $(\Gamma,\Lambda)$
is a HUP if and only if $\left(T^{-1}\Gamma,T^\ast\Lambda\right)$  is a HUP.
\end{enumerate}
Now, we state first known results on the Heisenberg uniqueness pair due to
Hedenmalm and Montes-Rodríguez \cite{HR}. After that, we briefly indicate the
progress on this recent problem.

\begin{theorem}\cite{HR}\label{th9}
Let $\Gamma=L_1\cup L_2,$  where $L_j;~j=1,2$ are any two parallel straight lines
and $\Lambda$ a subset of $\mathbb R^2$ such that $\overline{\pi(\Lambda)}=\mathbb R.$
Then $(\Gamma,\Lambda)$ is a Heisenberg uniqueness pair if and only if the set
\begin{equation}\label{exp24}
\widetilde\Lambda={\pi_1^a(\Lambda)}\cup\left[{{\pi_1^b(\Lambda)}\smallsetminus{\pi_1^c(\Lambda)}}\right]
\end{equation}
is dense in $\mathbb R$.
\end{theorem}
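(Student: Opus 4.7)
The plan is to reduce $\Gamma$ to a canonical form by invariance, write $\hat\mu$
explicitly as a linear combination of Fourier transforms of two densities on the line,
and then analyse the vanishing condition pointwise in $\xi$.

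By the invariance properties~(i) and~(ii), I may assume after translation and a linear
change of coordinates that $L_1 = \mathbb R\times\{0\}$ and $L_2 = \mathbb R\times\{\alpha\}$
for some $\alpha>0$, with $\pi$ denoting projection onto the first coordinate. Any
$\mu\in X(\Gamma)$ then decomposes as
$d\mu = f_1(x)\,dx\otimes\delta_0 + f_2(x)\,dx\otimes\delta_\alpha$ with
$f_1,f_2\in L^1(\mathbb R)$, and
\[
\hat\mu(\xi,\eta) = \hat{f_1}(\xi) + e^{-i\pi\alpha\eta}\hat{f_2}(\xi).
\]
The hypothesis $\hat\mu|_\Lambda=0$ is therefore equivalent to the scalar identity
$\hat{f_1}(\xi) + e^{-i\pi\alpha\eta}\hat{f_2}(\xi) = 0$ for every $(\xi,\eta)\in\Lambda$.

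Next I would classify each $\xi\in\pi(\Lambda)$ according to the fibre
$\Lambda_\xi = \{\eta:(\xi,\eta)\in\Lambda\}$. If the set
$\{e^{-i\pi\alpha\eta}:\eta\in\Lambda_\xi\}$ contains at least two distinct values, the
resulting $2\times2$ system is non-degenerate and forces $\hat{f_1}(\xi)=\hat{f_2}(\xi)=0$;
this collection of $\xi$ should be $\pi_1^a(\Lambda)$. Otherwise there is a unique unimodular
constant $c(\xi)$ with $\hat{f_1}(\xi) = -c(\xi)\hat{f_2}(\xi)$, and the two auxiliary sets
$\pi_1^b(\Lambda)$ and $\pi_1^c(\Lambda)$ should isolate those $\xi$ for which this
one-parameter relation is informative versus those where it is automatically consistent and
so carries no information.

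For sufficiency, assume $\widetilde\Lambda$ is dense in $\mathbb R$. Combining the outright
vanishing on $\pi_1^a(\Lambda)$ with the relation on $\pi_1^b(\Lambda)\setminus\pi_1^c(\Lambda)$
(symmetrised using pairs of points whose $c$-values differ), I would argue that both
$\hat{f_1}$ and $\hat{f_2}$ vanish on $\widetilde\Lambda$; uniform continuity of the Fourier
transform of an $L^1$ function then extends this to all of $\mathbb R$, and Fourier inversion
gives $f_1 = f_2 = 0$.

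For necessity, if $\widetilde\Lambda$ is not dense, pick an open interval
$I\subset\mathbb R\setminus\overline{\widetilde\Lambda}$. I would build a non-trivial pair of
Schwartz functions $g_1,g_2$ supported in $I$, such that the corresponding
$f_j = \check g_j$ lie in $L^1(\mathbb R)$, and such that the ratio constraint
$g_1(\xi) = -c(\xi)g_2(\xi)$ holds on $I\cap \pi_1^b(\Lambda)$; the condition
$I\cap(\pi_1^b(\Lambda)\setminus\pi_1^c(\Lambda)) = \emptyset$ guarantees that this
constraint is the only one imposed and that it can be solved without forcing
$g_1\equiv g_2\equiv 0$, producing $\mu\ne 0$ with $\hat\mu|_\Lambda=0$.

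The hard part will be the necessity direction: one must exhibit a non-zero
$(g_1,g_2)$ supported in $I$ while simultaneously matching the prescribed
unimodular ratio $-c(\xi)$ throughout $I\cap\pi_1^b(\Lambda)$. This forces a careful
examination of how $c(\xi)$ is pinned down on $\pi_1^c(\Lambda)$ (where it is redundant) as
opposed to $\pi_1^b(\Lambda)\setminus\pi_1^c(\Lambda)$ (where it is binding); the precise
set-theoretic form of $\widetilde\Lambda$ in~\eqref{exp24} is dictated by exactly this
dichotomy, and the additional density assumption $\overline{\pi(\Lambda)} = \mathbb R$ is
needed to rule out the trivial obstruction that the first marginal of $\Lambda$ already fails
to capture $\mathbb R$.
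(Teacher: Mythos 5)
First, a point of reference: the paper does not actually prove Theorem \ref{th9}; it is quoted from Hedenmalm and Montes-Rodr\'iguez \cite{HR}, and the authors explicitly decline even to define $\pi_1^a(\Lambda)$, $\pi_1^b(\Lambda)$, $\pi_1^c(\Lambda)$. The closest internal template is the proof of Theorem \ref{th17} for four parallel lines in Section \ref{section3}, and your outline follows that template correctly at the top level: normalize by the invariance properties, write $\hat\mu(\xi,\eta)=\hat f_1(\xi)+e^{-i\pi\alpha\eta}\hat f_2(\xi)$, stratify $\pi(\Lambda)$ by the number of distinct values of $e^{-i\pi\alpha\eta}$ in the fibre, and observe that two distinct values give a nonsingular Vandermonde system forcing $\hat f_1(\xi)=\hat f_2(\xi)=0$. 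Your reading of the role of $\overline{\pi(\Lambda)}=\mathbb R$ is also right.

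The genuine gap is that you never identify the tool on which everything in the single-fibre stratum turns, namely Wiener's lemma (Lemma \ref{lemma4}): if $\psi$ agrees near $\xi$ on $E$ with $\hat\varphi$ for some $\varphi\in L^1(\mathbb R)$ and $\psi(\xi)\neq0$, then $1/\psi$ has the same property. This lemma is what (a) defines the dispensable set non-circularly, as the set of $\xi$ in the single-fibre stratum where $\xi\mapsto e^{-i\pi\alpha\eta(\xi)}$ locally agrees with the Fourier transform of an $L^1$ function (the space $L^{E,\xi}_{loc}$ of Section \ref{section3}); (b) proves sufficiency on $\pi_1^b(\Lambda)\smallsetminus\pi_1^c(\Lambda)$: if $\hat f_2(\xi)\neq0$ there, then by Wiener's lemma $c=-\hat f_1/\hat f_2$ is locally a Fourier transform, hence $\xi$ lies in the dispensable set, a contradiction. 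Your proposed ``symmetrisation using pairs of points whose $c$-values differ'' cannot substitute for this, because every $\xi$ in the single-fibre stratum has, by definition, only one fibre point and there are no pairs to play off against each other. Finally, (c) the same structure is what makes the necessity direction work: your plan to match $g_1(\xi)=-c(\xi)g_2(\xi)$ on $I\cap\pi_1^b(\Lambda)$ with a nonzero pair of Schwartz functions fails in general, since $c(\xi)$ is an arbitrary unimodular function of $\xi$ with no regularity, and a pair of functions whose Fourier transforms are supported in $I$ cannot realize an arbitrary prescribed ratio on an arbitrary closed subset of $I$. The construction succeeds only because $I$ can be shrunk so that $I\cap\pi(\Lambda)$ lies in the dispensable set, where $c=\hat\varphi$ for a single $\varphi\in L^1(\mathbb R)$ on a single subinterval; one then takes $\hat f_2$ supported in that subinterval and $f_1=-\varphi\ast f_2$, exactly as in Lemma \ref{lemma8}. (For two lines the interlacing analysis of Lemmas \ref{lemma6}--\ref{lemma9} is not needed, since the two-value stratum is wholly contained in $\widetilde\Lambda$; it becomes essential from three lines onward.) Without Wiener's lemma both the definition of $\widetilde\Lambda$ and the two directions of the equivalence remain heuristic.
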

Here we avoid to mention the notations appeared in (\ref{exp24}) as they are bit involved,
however, we have written down the same notations as in the article \cite{HR}. Though, their
main features can be perceived in Section \ref{section3}.
\begin{theorem}\cite{HR}\label{th18}
Let $\Gamma$ be the hyperbola $x_{1}x_{2}=1$ and $\Lambda_{\alpha,\beta}$ a lattice-cross
defined by
\[\Lambda_{\alpha,\beta}=\left(\alpha\mathbb Z\times\{0\}\right)\cup\left(\{0\}\times\beta\mathbb Z\right),\]
where $\alpha, \beta$ are positive reals. Then $\left(\Gamma,\Lambda_{\alpha,\beta}\right)$ is a Heisenberg
uniqueness pair if and only if $\alpha\beta\leq1$.
\end{theorem}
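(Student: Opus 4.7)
The plan is to translate the vanishing condition $\hat\mu|_{\Lambda_{\alpha,\beta}}=0$ into a pair of periodization identities on $\mathbb R$, reduce to the single-parameter case $\alpha=\beta$ by the invariance of the problem, and then attack the two directions separately. I parametrize $\Gamma$ by $t\in\mathbb R\setminus\{0\}$ via $(t,1/t)$ and write $d\mu=f(t)\,dt$ with $f\in L^1(\mathbb R\setminus\{0\})$ (absorbing the arc-length Jacobian), so that
\[
\hat\mu(\xi,\eta)=\int_{\mathbb R}e^{-i\pi(\xi t+\eta/t)}f(t)\,dt.
\]
Evaluating at $(\alpha m,0)$ gives $\hat f(\alpha m)$, and by Poisson summation the hypothesis $\hat f|_{\alpha\mathbb Z}=0$ is equivalent to the vanishing of the $(2/\alpha)$-periodization
\[
F(t):=\sum_{k\in\mathbb Z}f(t+2k/\alpha)\equiv 0.
\]
The substitution $s=1/t$ transforms the other evaluations $(0,\beta n)$ into $\hat h(\beta n)=0$, where $h(s):=f(1/s)/s^2$, which by the same argument gives
\[
H(s):=\sum_{k\in\mathbb Z}h(s+2k/\beta)\equiv 0.
\]
The hyperbolic dilation $T(x,y)=(\lambda x,y/\lambda)$ preserves $\Gamma$ and sends $\Lambda_{\alpha,\beta}$ to $\Lambda_{\lambda\alpha,\beta/\lambda}$, so by invariance property (ii) I may assume $\alpha=\beta$; only the product $\alpha\beta$ matters.

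For the necessity direction ($\alpha\beta>1\Rightarrow$ not a HUP), I would exhibit a nonzero $f$ satisfying both $F\equiv 0$ and $H\equiv 0$. When $\alpha>1$ the fundamental cell $[0,2/\alpha)$ has length below $2$, leaving enough room to choose $f$ as a finite combination of indicators on a symmetric union of intervals whose inversive image is compatible with the translation lattice; the two periodization conditions then reduce to a finite, underdetermined linear system that admits nonzero solutions.

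For the sufficiency direction ($\alpha\beta\le 1\Rightarrow$ HUP), which is the heart of the theorem, I would combine $F\equiv 0$ and $H\equiv 0$ into a single functional equation by alternating the translation $t\mapsto t+2/\alpha$ and the inversion $t\mapsto 1/t$. The resulting orbit structure is governed by a Gauss-type map $x\mapsto\{1/x\}$ rescaled to the period $2/\alpha$, and the pair of vanishing periodizations is equivalent to $f$ being an invariant distribution for the corresponding transfer operator. When $\alpha\le 1$, Perron--Frobenius theory for this operator excludes nontrivial $L^1$ invariants and forces $f\equiv 0$. In practice I would first settle the critical case $\alpha\beta=1$ via the classical spectral theory of the Gauss--Kuzmin--Wirsing operator (whose leading eigenvalue $1$ is simple) and then propagate to $\alpha\beta<1$ by a contraction/compactness argument.

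The main obstacle is precisely the critical threshold $\alpha\beta=1$: strictly below it the transfer operator would be outright contractive and the conclusion would be immediate, while at $\alpha\beta=1$ the operator sits on the boundary of contractivity, and ruling out nontrivial fixed points requires the full ergodic/spectral analysis of the Gauss map. A secondary difficulty is that the two branches of the hyperbola remain coupled throughout: the inversion $t\mapsto 1/t$ preserves each branch, but the periodization does not, so $f$ on $t>0$ and $t<0$ must be tracked simultaneously.
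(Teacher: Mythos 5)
You should first note that the paper you are annotating does not prove this statement at all: Theorem \ref{th18} is quoted verbatim from Hedenmalm and Montes-Rodr\'iguez \cite{HR}, so the only proof to measure your sketch against is the original one. Your skeleton --- converting $\hat\mu\vert_{\Lambda_{\alpha,\beta}}=0$ into two vanishing periodizations, reducing to a single parameter via the hyperbolic dilation $(x,y)\mapsto(\lambda x,y/\lambda)$ (which is legitimate and is used in \cite{HR}), and coupling the two conditions through a Gauss-type map --- is indeed the right skeleton. But the step you lean on for sufficiency is wrong as stated. The transfer operator of the Gauss map, and of its rescaled variants relevant to $\alpha\beta\le 1$, \emph{does} have a nontrivial nonnegative fixed point in $L^1$ of the fundamental interval, namely the density of the absolutely continuous invariant measure (for the classical Gauss map, $h_0(x)=\tfrac{1}{\log 2}\,\tfrac{1}{1+x}$); and for $\alpha\beta<1$ the operator is likewise not ``outright contractive,'' since $1$ is still an eigenvalue with a positive eigenfunction. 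So ``Perron--Frobenius excludes nontrivial $L^1$ invariants'' cannot be the mechanism, and taken literally your argument would disprove the Gauss--Kuzmin--Wirsing theory you invoke. What actually closes the critical case in \cite{HR} is finer: the vanishing periodizations produce a fixed-point identity with signs, one deduces $|u|\le \mathcal{L}|u|$ for the restriction $u$ of $f$ to the fundamental interval, ergodicity then forces $|u|$ to be a multiple of the invariant density, and finally one checks that unfolding such a $u$ back to all of $\mathbb R$ through the periodization identities is incompatible with $f\in L^1(\mathbb R)$ unless $u=0$. That last integrability/unfolding step is the heart of the matter and is entirely missing from your plan.

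The necessity direction is also underdeveloped. For $\alpha\beta>1$ you propose a finite union of indicators solving a ``finite, underdetermined linear system,'' but the two periodization conditions form an infinite system coupled by the inversion $t\mapsto 1/t$, which spreads any finite union of intervals over infinitely many translates of both lattices; nothing in your description makes the system finite. The construction in \cite{HR} instead exploits the specific supercritical geometry: when $\alpha\beta>1$ the branches of the associated Gauss-type map fail to cover the fundamental interval, leaving a gap on which data can be prescribed freely and then propagated consistently to a nonzero $f\in L^1$ annihilating all of $\Lambda_{\alpha,\beta}$. You would need to exhibit that gap (whose existence is precisely equivalent to $\alpha\beta>1$) or give some other explicit construction. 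A minor point: the passage from $\hat f\vert_{\alpha\mathbb Z}=0$ to the vanishing of the $(2/\alpha)$-periodization is best justified by noting that the Fourier coefficients of the periodization of an $L^1$ function are the samples of $\hat f$, rather than by citing Poisson summation, whose pointwise form needs hypotheses you have not granted yourself.
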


For $\xi\in\Lambda,$ define a function $e_\xi$ on $\Gamma$ by $e_{\xi}(x)=e^{i\pi x\cdot\xi}.$
As a dual problem to Theorem \ref{th18}, Hedenmalm and Montes-Rodríguez \cite{HR} have proved
the following density result which in turn solve the one-dimensional Kein-Gordon equation.
\begin{theorem}\cite{HR}\label{th10}
The pair  $(\Gamma,\Lambda)$ is a Heisenberg uniqueness pair  if and only if
the set $\{e_{\xi}:~\xi\in\Lambda\}$ is a weak$^\ast$ dense subspace of
$L^{\infty}(\Gamma).$
\end{theorem}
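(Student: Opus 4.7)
The plan is to prove this as a direct consequence of $L^1$–$L^\infty$ duality together with the Hahn–Banach theorem. First I would use the Radon–Nikodym theorem to identify $X(\Gamma)$ isometrically with $L^1(\Gamma,d\sigma)$, where $\sigma$ is the arc length measure on $\Gamma$: every $\mu\in X(\Gamma)$ is of the form $d\mu = f\,d\sigma$ for a uniquely determined $f\in L^1(\Gamma,d\sigma)$, and $\mu=0$ if and only if $f=0$. Under this identification the Fourier transform at $\xi$ becomes
\[
\hat\mu(\xi)\;=\;\int_\Gamma e^{-i\pi x\cdot\xi}\,f(x)\,d\sigma(x)\;=\;\int_\Gamma \overline{e_\xi(x)}\,f(x)\,d\sigma(x),
\]
which is precisely the canonical dual pairing of $f\in L^1(\Gamma)$ against the bounded function $e_\xi\in L^\infty(\Gamma)$; the observation $|e_\xi|\equiv 1$ makes membership in $L^\infty(\Gamma)$ automatic.

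The next step is to recast the HUP condition in dual form. The vanishing $\hat\mu|_\Lambda=0$ is equivalent to saying that $f$ annihilates every $e_\xi$ with $\xi\in\Lambda$, and hence annihilates the full linear span $W:=\operatorname{span}\{e_\xi:\xi\in\Lambda\}\subset L^\infty(\Gamma)$. Therefore $(\Gamma,\Lambda)$ is a Heisenberg uniqueness pair exactly when the preannihilator $W_\perp=\{f\in L^1(\Gamma):\langle f,g\rangle=0 \text{ for all } g\in W\}$ reduces to $\{0\}$.

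Finally I would invoke the standard duality lemma in the dual pair $(L^1,L^\infty)$: a linear subspace $W\subset L^\infty$ is weak$^*$ dense if and only if $W_\perp=\{0\}$. This is immediate from Hahn–Banach, since every weak$^*$ continuous linear functional on $L^\infty(\Gamma)$ is represented by an element of $L^1(\Gamma)$, and the failure of weak$^*$ density of $W$ would produce a nonzero $h\in L^1$ with $h\perp W$. Concatenating the three steps yields the claimed equivalence.

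I do not anticipate a genuine obstacle here, as the statement is really a piece of abstract duality; the substantive content of the paper lies in verifying the analytic density hypothesis for specific $\Gamma$ and $\Lambda$. The only bookkeeping to monitor is the isometric identification $X(\Gamma)\cong L^1(\Gamma,d\sigma)$, so that vanishing of $\mu$ corresponds to vanishing of $f$ in $L^1$, and a small sign issue in the Fourier exponent: the complex conjugate on $e_\xi$ in the pairing can be absorbed by replacing $\Lambda$ with $-\Lambda$, which has no effect on weak$^*$ density of the linear span.
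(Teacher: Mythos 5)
The paper states this theorem without proof, as a quotation from \cite{HR}, so there is no in-paper argument to compare against; your proposal is correct and is essentially the duality argument used in the cited source. Identifying $X(\Gamma)$ with $L^1(\Gamma,d\sigma)$, reading $\hat\mu\vert_\Lambda=0$ as membership of $f$ in the preannihilator of $W=\operatorname{span}\{e_\xi:\xi\in\Lambda\}$, and invoking the Hahn--Banach fact that a subspace $W\subset L^\infty=(L^1)^\ast$ is weak$^\ast$ dense iff $W_\perp=\{0\}$ is exactly the right route, and your footnote about the complex conjugate is harmless since $g\mapsto\bar g$ is a weak$^\ast$-bicontinuous bijection of $L^\infty(\Gamma)$, so $W$ and $\overline{W}$ are weak$^\ast$ dense simultaneously.
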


\begin{remark}\label{rk6}
In particular, for $\Gamma$ to be an algebraic curve, the question of Heisenberg uniqueness
pair can be understood through a partial differential equation (PDE). That is, if $\Gamma$
is the zero set of a polynomial $P$ on $\mathbb R^2,$ then $\hat\mu$ satisfies the PDE
\[P\left(\dfrac{\partial_1}{i\pi}, \dfrac{\partial_2}{i\pi}\right)\hat\mu=0\]
with initial condition $\hat\mu\vert_\Lambda=0.$ This formulation may help potentially
in determining the geometrical structure of the set $Z(\hat\mu),$ the zero set of the
function $\hat\mu.$ If we consider $\Lambda$ to be contained in $Z(\hat\mu),$ then
$\left(\Gamma, \Lambda\right)$ is not a HUP. Hence the question of the HUP arises
when $\Lambda$ has located away from $Z(\hat\mu).$
\end{remark}
\smallskip

In the case when $\mu$ is supported on a circle, the function  $\hat\mu$
becomes real analytic and hence it could vanish at most on a very thin set. Thus,
there are an enormous number of candidates for $\Lambda$ such that $\left(\Gamma,\Lambda\right)$
is a HUP. Some of the Heisenberg uniqueness pairs corresponding to circle has been
independently investigated by N. Lev and P. Sjolin. Following are their
main results. For more details, we refer to \cite{L, S1}.

\begin{theorem}\cite{L, S1}\label{th16}
Let $\Gamma=S^1$ be the unit circle.
\smallskip

\noindent $(i)$ Let $\Lambda$ be a circle of radius $r.$ Then $\left(\Gamma,\Lambda\right)$
is a HUP if and only if $J_{k}(r)\neq0$ for all $k\in\mathbb Z_+.$
\smallskip

\noindent $(ii)$ Let $\Lambda$ be a straight line. Then $\left(\Gamma,\Lambda\right)$
is not a HUP.
\smallskip

\noindent $(iii)$ Let $\Lambda=L_1\cup L_2,$ where $L_j;~j=1,2$ are two straight lines.
If $L_{1}$ and $L_{2}$ are parallel, then $\left(\Gamma,\Lambda\right)$ is a HUP.
\smallskip

\noindent $(iv)$ Let $L_j;~j=1,2,\ldots, N$ be the $N$ different straight
lines which intersect at one point and angle between any of two lines out
of these $N$ lines is of the form $\pi\alpha.$ Let $\Lambda_N=\bigcup\limits_{j=1}^N L_j.$
Then  $\left(\Gamma,\Lambda_N\right)$ is not a HUP if and only if $\alpha$ is rational.
\end{theorem}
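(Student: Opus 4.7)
The plan is to combine the Fourier expansion of $f$ on $S^1$ with the Jacobi--Anger identity. Writing $d\mu=f(\theta)\,d\theta$ with $f\in L^1(S^1)$ and $(\xi,\eta)=r(\cos\phi,\sin\phi)$, the expansion $e^{-i\pi r\cos(\theta-\phi)}=\sum_{k\in\mathbb Z}(-i)^k J_k(\pi r)e^{ik(\theta-\phi)}$ gives
\[
\hat\mu(\xi,\eta)=\sum_{k\in\mathbb Z}(-i)^k J_k(\pi r)\,e^{-ik\phi}\int_0^{2\pi}e^{ik\theta}f(\theta)\,d\theta.
\]
For (i), with $\Lambda$ the concentric circle of radius $r$, the vanishing $\hat\mu|_\Lambda=0$ is a trigonometric series in $\phi$; uniqueness of Fourier series forces $J_k(\pi r)\hat{f}(-k)=0$ for every $k$, and since $J_{-k}=(-1)^kJ_k$ this gives $f\equiv 0$ precisely when $J_k(\pi r)\neq 0$ for all $k\in\mathbb Z_+$.

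For (ii) and (iii) I would pass directly through the line integral. After rotating so the line is horizontal at height $c$, I substitute $u=\cos\theta$ and split $[0,2\pi]$ at $\pi$; then $\hat\mu(\,\cdot\,,c)$ becomes the one-dimensional Fourier transform of an $L^1$ function on $[-1,1]$. By Fourier uniqueness on $\mathbb R$, its vanishing is equivalent to the pointwise reflection identity
\[
e^{-i\pi c\sin\theta}f(\theta)+e^{i\pi c\sin\theta}f(2\pi-\theta)=0\quad\text{a.e.}
\]
For (ii), $f(\theta)=e^{i\pi c\sin\theta}\sin\theta$ is an explicit nonzero solution, so HUP fails. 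For (iii), imposing the identity at two distinct heights $c_1\neq c_2$ and subtracting yields $f(\theta)\bigl[e^{-2i\pi(c_1-c_2)\sin\theta}-1\bigr]=0$; the bracket vanishes only on a finite set of $\theta$, forcing $f\equiv 0$.

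For (iv), I translate the common intersection to the origin and let the $j$-th line have angle $\pi\beta_j$. After a rotation by $-\pi\beta_j$ and the argument of (ii), each line contributes a reflection identity $f(\theta)=-f(2\pi\beta_j-\theta)$. Composing two such reflections gives the translation invariance $f(\theta)=f(\theta+2\pi(\beta_k-\beta_j))$ for every pair $(j,k)$. If some $\beta_k-\beta_j$ is irrational, the resulting subgroup of translations is dense in $\mathbb R/2\pi\mathbb Z$; Fourier analysis on $S^1$ then forces every nonzero Fourier coefficient of $f$ to vanish, and any one reflection identity kills the remaining constant mode, so $f\equiv 0$ and HUP holds. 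Conversely, if every $\beta_k-\beta_j$ is rational with common denominator $Q$, then $f(\theta)=\sin\bigl(Q(\theta-\pi\beta_1)\bigr)$ satisfies every reflection identity and is nonzero, so HUP fails.

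The main obstacle is the derivation of the reflection identity driving (ii)-(iv): the substitution $u=\cos\theta$ is two-sheeted with singular Jacobian $1/\sqrt{1-u^2}$, and one has to verify that the combined integrand lies in $L^1([-1,1])$ before invoking one-dimensional Fourier uniqueness. Once this is in hand, the remaining content of (iv) is the group-theoretic dichotomy between dense and finite cyclic subgroups of $\mathbb R/2\pi\mathbb Z$, which is exactly what the rationality of $\alpha$ controls.
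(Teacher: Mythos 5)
This theorem is quoted in the paper from Lev \cite{L} and Sj\"olin \cite{S1} without proof, so there is no in-paper argument to compare against; judged on its own, your proposal is essentially correct and follows the standard route (it is also the same Bessel-coefficient mechanism the paper itself uses in its proof of Theorem \ref{th4} for the circle--spiral pair). Three small points. First, with the paper's normalization $\hat\mu(\xi,\eta)=\int e^{-i\pi(x\xi+y\eta)}d\mu$, the Jacobi--Anger expansion produces $J_k(\pi r)$, as you write, whereas the statement (copied from sources using a $2\pi$-free convention) says $J_k(r)$; this is only a rescaling of $r$, but you should state which convention you are matching. Second, in (i) the ``only if'' direction needs the explicit counterexample: if $J_{k_0}(\pi r)=0$ take $f(\theta)=e^{ik_0\theta}$, so that every term of the series vanishes on the circle of radius $r$; your phrase ``precisely when'' leaves this implicit. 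Third, the integrability issue you flag is genuinely harmless: since $d\theta = du/\sqrt{1-u^2}$ under $u=\cos\theta$, the function $u\mapsto\bigl(e^{-i\pi c\sqrt{1-u^2}}f(\arccos u)+e^{i\pi c\sqrt{1-u^2}}f(-\arccos u)\bigr)/\sqrt{1-u^2}$ has $L^1[-1,1]$ norm bounded by $\|f\|_{L^1(S^1)}$, so one-dimensional Fourier injectivity applies and the reflection identity is an exact equivalence; with that in place, your treatment of (ii)--(iv), including the composition of two reflections into an irrational rotation and the counterexample $\sin\bigl(Q(\theta-\pi\beta_1)\bigr)$ in the rational case (valid since $Q(\beta_j-\beta_1)\in\mathbb Z$ for a common denominator $Q$), is sound, provided you note that $N\ge 2$ is needed for the irrational direction.
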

In contrast to the case of finitely many straight lines, P. Sjolin \cite{S1} has
shown that if $\Lambda=\bigcup\limits_{k=1}^{\infty}L_k, $ where $\{L_k\}$ is a
sequence of straight lines which intersect at one point. Then $(S^1,\Lambda)$
is a HUP.

\begin{remark}\label{rk5}
Since we know that any homogeneous harmonic polynomial on $\mathbb R^2$ can be
expressed as $Ar^j\sin(j\theta+\delta)$ for some $j\in\mathbb N$ and
$\delta\in[0, 2\pi)$ (see \cite{FNS}), up to some rotation and translation,
we can think of $\Lambda_N=\bigcup\limits_{k=1}^N L_k,$ appeared in Theorem
\ref{th16} $(iv),$ as the zero set of some homogeneous harmonic polynomial.
If $(S^1,\Lambda)$ is a Heisenberg uniqueness pair, then the set $\Lambda$
must be away from the zero set of any homogeneous harmonic polynomial.
However, the converse is not true. Since $(S^1,\Lambda)$ is not a HUP if
$\Lambda$ is a circle whose radius lie in the zero set of some Bessel function.
Thus, it is an interesting question to examine the exceptional sets for the
Heisenberg uniqueness pairs corresponding to circle.
\end{remark}

Subsequently, some of the Heisenberg uniqueness pairs corresponding to
the parabola have been obtained by P. Sjolin \cite{S2}. Let $|E|$ denotes
the Lebesgue measure of the set $E\subset\mathbb R.$

\begin{theorem}\cite{S2}\label{th11}
Let $\Gamma$ denote the parabola $y= x^2.$
\smallskip

\noindent $(i)$ Let $\Lambda=L$ be a straight line. Then $\left(\Gamma,\Lambda\right)$
is a HUP if and only if $L$ is parallel to the X-axis.
\smallskip

\noindent $(ii)$ Let $\Lambda=L_{1}\cup L_{2},$ where $L_j;~j=1,2$ are two
different straight lines. Then $\left(\Gamma,\Lambda\right)$ is a HUP.
\smallskip

\noindent $(iii)$ Let $L_j;~j=1,2$ be two different straight lines which are
not parallel to the $X$-axis. Let $E_j\subset L_j$ and $|E_j|>0;~j=1,2.$
If $\Lambda=E_{1}\cup E_{2},$ then $\left(\Gamma,\Lambda\right)$ is a HUP.
\end{theorem}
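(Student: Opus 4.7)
The plan is to parametrize $\Gamma$ by $t\mapsto(t,t^2)$, so that every $\mu\in X(\Gamma)$ has the form $d\mu=f(t)\,dt$ with $f\in L^1(\mathbb R)$ and
\[\hat\mu(\xi,\eta)=\int_{\mathbb R}e^{-i\pi(t\xi+t^2\eta)}f(t)\,dt.\]
Restricted to any straight line in the $(\xi,\eta)$-plane, the right-hand side becomes the one-dimensional Fourier transform of a pushforward of $f\,dt$ under a quadratic or linear change of variable, and extracting a functional equation for $f$ from the vanishing of this pushforward is the main mechanism throughout.

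For part $(i)$, if $L=\{\eta=\eta_0\}$ the condition $\hat\mu\vert_L=0$ is exactly the vanishing of the Fourier transform of $e^{-i\pi t^2\eta_0}f(t)$, which gives $f\equiv 0$ by Fourier uniqueness. Conversely, if $L$ is vertical, $\xi=c$, the substitution $u=t^2$ rewrites the condition as the vanishing of the Fourier transform in $\eta$ of a function on $(0,\infty)$; one extracts the symmetry $f(-t)=-e^{-2i\pi c t}f(t)$, which is satisfied by any $f(t)=e^{i\pi c t}h(t)$ with $h\in L^1$ odd. If $L$ is oblique, $\eta=a\xi+b$ with $a\neq 0$, the map $\phi(t)=t+at^2$ is two-to-one onto a half-line with preimages satisfying $t_+(s)+t_-(s)=-1/a$, and the vanishing of the pushforward of $e^{-i\pi b t^2}f(t)\,dt$ yields
\[f(t)=-e^{-i\pi b(2at+1)/a^2}f\!\left(-\tfrac{1}{a}-t\right)\quad\text{a.e.,}\]
which has many nonzero $L^1$ solutions, e.g.\ those supported near the fixed point $-1/(2a)$ of the involution $t\mapsto -1/a-t$. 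Hence $(\Gamma,L)$ is not a HUP in either non-horizontal case.

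For part $(ii)$, part $(i)$ immediately handles the case that one of the $L_j$ is horizontal, so assume neither is. From each $L_j$ derive a functional equation $f(t)=c_j(t)f(\sigma_j(t))$ with $|c_j|=1$ and an affine involution $\sigma_j$: namely $\sigma_j(t)=-1/a_j-t$ if $L_j$ is oblique and $\sigma_j(t)=-t$ if $L_j$ is vertical. Composing the two relations yields $f(t)=K(t)f(t+\tau)$ with $|K|=1$, where $\tau=1/a_2-1/a_1$ in the oblique-oblique case and $\tau=\pm 1/a$ in the vertical-oblique case. Whenever $\tau\neq 0$, $|f|$ is $\tau$-periodic on $\mathbb R$ and hence vanishes by integrability. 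The remaining $\tau=0$ configurations, two parallel oblique lines and two distinct vertical lines, have $\sigma_1=\sigma_2$; since the lines are distinct, $c_1/c_2$ is a nonconstant unimodular exponential in $t$, so subtracting the two equations forces $f\circ\sigma_1=0$ a.e., hence $f=0$.

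For part $(iii)$ the additional difficulty is that $\hat\mu$ only vanishes on $E_j\subset L_j$ of positive one-dimensional measure, so the pushforward's Fourier transform vanishes only on a subset of positive Lebesgue measure on the dual line, rather than identically. The crucial observation is that in every case above the pushforward is an $L^1$ density supported on a half-line: on $[-1/(4a_j),\infty)$ or $(-\infty,-1/(4a_j)]$ (according to the sign of $a_j$) when $L_j$ is oblique, and on $(0,\infty)$ after the substitution $u=t^2$ when $L_j$ is vertical. The Fourier transform of an $L^1$ function supported on a half-line extends to a bounded analytic function in a complex half-plane of the dual variable, and by the F.\ and M.\ Riesz / Privalov uniqueness theorem its boundary values cannot vanish on a set of positive Lebesgue measure without the function being identically zero. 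Thus the pushforward itself vanishes, the same functional equations as in $(ii)$ are recovered, and the rest of the argument proceeds verbatim. The main technical obstacle I anticipate lies here: cleanly verifying the $L^1$ half-line support and the integrability needed to invoke the Riesz-type boundary-vanishing theorem, together with a careful case-by-case treatment of the geometric configurations of $(L_1,L_2)$, in particular the degenerate $\tau=0$ configurations inherited from $(ii)$.
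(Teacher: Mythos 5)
Your proposal is correct; note that the paper does not actually prove this theorem but merely quotes it from Sjolin \cite{S2}, so there is no internal proof to compare against. Your method --- converting the restriction of $\hat\mu$ to each non-horizontal line into the Fourier transform of a half-line-supported $L^1$ pushforward, extracting an involutive functional equation $f=c\cdot(f\circ\sigma)$ with $|c|=1$, combining two such equations into either periodicity of $|f|$ (hence $f=0$ by integrability) or a pointwise cancellation against a nonconstant unimodular exponential, and in part $(iii)$ upgrading vanishing on a positive-measure set to identical vanishing via the log-integral uniqueness theorem for half-line-supported $L^1$ functions (the paper's Lemma \ref{lemma3}) --- is exactly the toolkit this paper deploys for its analogous exponential-curve and hyperbola results (Theorems \ref{th8} and \ref{th6}, via Lemmas \ref{lemma15} and \ref{lemma1}), and your case analysis of the line configurations, including the degenerate parallel-oblique and two-vertical cases, is complete.
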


The question of Heisenberg uniqueness pair in the higher dimension
has been first taken up by F. J. Gonzalez Vieli \cite{V1, V2}.
\begin{theorem}\cite{V1}\label{th14}
Let $\Gamma=S^{n-1}$ be the unit sphere in $\mathbb R^n$ and $\Lambda$
a sphere of radius $r.$ Then $\left(\Gamma,\Lambda\right)$ is a HUP
if and only if $J_{(n+2k-2)/2}(r)\neq0 $ for all $k\in\mathbb Z_+.$
\end{theorem}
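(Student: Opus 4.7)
The plan is to reduce the statement to the Bochner--Hecke identity, which computes the Fourier transform of a spherical harmonic times surface measure in closed form, and then to read off the answer from the orthogonality of spherical harmonics on $S^{n-1}$. Write $\mu=f\,d\sigma$ with $f\in L^{1}(S^{n-1})$; the HUP question amounts to showing that vanishing of the continuous function $\omega\mapsto\hat\mu(r\omega)$, $\omega\in S^{n-1}$, forces $f=0$.

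The key computation I would invoke is: for every spherical harmonic $Y_{k}$ of degree $k$ on $S^{n-1}$ and every $x\in S^{n-1}$,
\[
\int_{S^{n-1}} e^{-i\pi r\,\omega\cdot x}\,Y_{k}(\omega)\,d\sigma(\omega)\;=\;c_{n,k}\,\frac{J_{(n+2k-2)/2}(\pi r)}{(\pi r)^{(n-2)/2}}\,Y_{k}(x),
\]
with an explicit nonzero constant $c_{n,k}$. Applying Fubini inside $\int_{S^{n-1}}\hat\mu(r\omega)\,Y_{k}(\omega)\,d\sigma(\omega)$ then shows that the $k$-th spherical-harmonic moment of $\omega\mapsto\hat\mu(r\omega)$ is a nonzero multiple of $J_{(n+2k-2)/2}(\pi r)\int_{S^{n-1}} f\,Y_{k}\,d\sigma$. (The theorem writes $r$ where I have $\pi r$; the discrepancy is only a dilation absorbed by the invariance property~(ii) preceding the definition.)

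For the sufficiency direction, if $\hat\mu|_{\Lambda}\equiv 0$ and $J_{(n+2k-2)/2}(r)\neq 0$ for every $k\in\mathbb Z_{+}$, then $\int_{S^{n-1}} f\,Y_{k}\,d\sigma=0$ for every spherical harmonic of every degree. Since finite sums of spherical harmonics coincide with restrictions of polynomials to $S^{n-1}$, which by Stone--Weierstrass are uniformly dense in $C(S^{n-1})$, this forces $f=0$ and hence $\mu=0$. For necessity, suppose $J_{(n+2k_{0}-2)/2}(r)=0$ for some $k_{0}\in\mathbb Z_{+}$; pick any nonzero spherical harmonic $Y_{k_{0}}$ of degree $k_{0}$ and set $\mu:=Y_{k_{0}}\,d\sigma\in X(S^{n-1})$. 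By the Bochner--Hecke identity applied to $f=Y_{k_{0}}$, the function $\hat\mu(r\omega)$ is a constant multiple of $J_{(n+2k_{0}-2)/2}(r)\,Y_{k_{0}}(\omega)$, hence vanishes identically on $\Lambda$, while $\mu\neq 0$.

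The main obstacle is essentially bookkeeping: verifying the precise form of the Bochner--Hecke identity, in particular that the Bessel order is exactly $(n+2k-2)/2$ and that $c_{n,k}\neq 0$, and tracking the factor of $\pi$ coming from the unconventional Fourier transform used in this paper. Beyond Fubini and density of polynomials on the sphere, no delicate analysis is needed.
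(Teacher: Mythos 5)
The paper does not prove this statement: Theorem \ref{th14} is quoted from Gonzalez Vieli \cite{V1} as part of the introductory survey, so there is no in-paper argument to compare against. Your proof is the standard one for this result and is essentially correct: the Funk--Hecke/Bochner identity diagonalizes the restriction of $\hat\mu$ to a concentric sphere on each space of degree-$k$ spherical harmonics with eigenvalue proportional to $J_{(n+2k-2)/2}$, sufficiency follows from density of polynomial restrictions in $C(S^{n-1})$ (a finite measure annihilating a dense subspace of $C(S^{n-1})$ is zero), and necessity follows by taking $\mu=Y_{k_0}\,d\sigma$ for a degree at which the Bessel factor vanishes. Two small remarks. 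First, your appeal to invariance property (ii) to dispose of the factor $\pi$ is not quite right as stated: a dilation $T=cI$ rescales $\Gamma$ and $\Lambda$ simultaneously, so it cannot turn the condition $J_{(n+2k-2)/2}(\pi r)\neq0$ into $J_{(n+2k-2)/2}(r)\neq0$ while keeping $\Gamma=S^{n-1}$; the honest explanation is that the survey quotes \cite{V1} in that paper's Fourier normalization (the same mismatch occurs in Theorem \ref{th16}(i)), and with the present paper's transform the correct condition involves $\pi r$. Second, in the sufficiency direction you should say explicitly that $f\,d\sigma$ defines a continuous linear functional on $C(S^{n-1})$, so vanishing of all spherical-harmonic moments kills the measure; with that, the argument is complete.
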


\begin{theorem}\cite{V2}\label{th15}
Let $\Gamma$ be the paraboloid $x_{n}=x_{1}^2+x_{2}^2+\cdots+x_{n-1}^2$
in $\mathbb R^n$ and $\Lambda$ an affine hyperplane in $\mathbb R^n$
of dimension $~n-1.$ Then $\left(\Gamma,\Lambda\right)$ is a HUP if and only
if $\Lambda$ is parallel to the hyperplane $x_{n}=0.$
\end{theorem}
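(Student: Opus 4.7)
The plan is to use the affine invariance of the HUP to reduce $\Lambda$ to a canonical form, and then handle each direction by direct Fourier analysis. Parametrize $\Gamma$ by $x' = (x_1, \ldots, x_{n-1}) \mapsto (x', |x'|^2)$, so that every $\mu \in X(\Gamma)$ corresponds to a function $G \in L^1(\mathbb R^{n-1})$ with
\[\hat\mu(\xi', \xi_n) = \int_{\mathbb R^{n-1}} G(x')\, e^{-i\pi(x'\cdot\xi' + |x'|^2\xi_n)}\,dx'.\]
The block maps $T(x', x_n) = (Ox', x_n)$ for $O\in O(n-1)$ and the anisotropic scalings $T_\lambda(x', x_n) = (\lambda x', \lambda^2 x_n)$ satisfy $T^{-1}\Gamma = \Gamma$; invoking $(\Gamma,\Lambda) \sim (T^{-1}\Gamma, T^\ast\Lambda)$ together with a translation of $\Lambda$, I may reduce the affine hyperplane $\Lambda$ either to $\{\xi_n = \gamma\}$ (when $\Lambda$ is parallel to $\{x_n=0\}$) or to $\{\xi_1 + c\xi_n = 0\}$ for some $c \in \mathbb R$ (otherwise).

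In the parallel case, the condition $\hat\mu|_\Lambda = 0$ reads
\[\int_{\mathbb R^{n-1}}\bigl(G(x')\,e^{-i\pi\gamma|x'|^2}\bigr)\,e^{-i\pi x'\cdot\xi'}\,dx' = 0 \quad\text{for all } \xi' \in \mathbb R^{n-1},\]
i.e., the Fourier transform on $\mathbb R^{n-1}$ of the $L^1$-function $G(x')\,e^{-i\pi\gamma|x'|^2}$ vanishes identically; by injectivity $G\equiv 0$, hence $\mu = 0$, so $(\Gamma,\Lambda)$ is a HUP.

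For the converse, when $\Lambda = \{\xi_1 + c\xi_n = 0\}$ I build an explicit nonzero counterexample. Substituting $\xi_1 = -c\xi_n$, using the identity $x_1^2 - c x_1 = (x_1 - c/2)^2 - c^2/4$, and shifting $x_1 \mapsto x_1 + c/2$ gives
\[\hat\mu|_\Lambda = e^{i\pi c^2\xi_n/4}\!\int G\bigl(x_1+\tfrac{c}{2},\,x''\bigr)\,e^{-i\pi\xi_n x_1^2}\,e^{-i\pi(x''\cdot\xi'' + |x''|^2\xi_n)}\,dx_1\,dx'',\]
with $x''= (x_2,\ldots,x_{n-1})$ and $\xi'' = (\xi_2,\ldots,\xi_{n-1})$. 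Now choose $G(x_1,x'') := h_1(x_1-c/2)\,h_2(x'')$, where $h_1 \in L^1(\mathbb R)$ is any odd nonzero function (say $h_1(t)=t\,e^{-t^2}$) and $h_2 \in L^1(\mathbb R^{n-2})$ is arbitrary and nonzero. The inner $x_1$-integrand is then $h_1(x_1)\,e^{-i\pi\xi_n x_1^2}$, which is odd in $x_1$; therefore $\hat\mu|_\Lambda \equiv 0$ for every $\xi_n,\xi''$, while $\mu\neq 0$, showing $(\Gamma,\Lambda)$ is not a HUP.

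The only real subtlety lies in the reduction: one must verify that the anisotropic scaling $T_\lambda$ maps $\Gamma$ to itself, which it does because $T_\lambda(x', |x'|^2) = (\lambda x', |\lambda x'|^2)$, and that the rotation-translation-scaling composition really covers every affine hyperplane not parallel to $\{x_n=0\}$. Once the canonical form is in hand, the argument is driven by a single structural observation: the reflection symmetry $x_1\mapsto -x_1$ of $\Gamma$ (composed with a translation by $c/2$) forces cancellation via an odd-function argument, mirroring Sjolin's treatment of the parabola case $n=2$ in Theorem \ref{th11}.
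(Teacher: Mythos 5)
Your argument is correct, but note that the paper does not actually prove Theorem \ref{th15}: it is quoted as a background result and attributed to Gonzalez Vieli \cite{V2}, so there is no in-paper proof to compare against. Your proposal is therefore a self-contained alternative to the cited source, and it is sound: the reduction via rotations $(x',x_n)\mapsto(Ox',x_n)$ and independent translation of $\Lambda$ (both legitimate under invariance properties $(i)$ and $(ii)$, which extend verbatim to $\mathbb R^n$) does bring a non-parallel hyperplane to the form $\{\xi_1+c\xi_n=0\}$; the parallel case is exactly injectivity of the Fourier transform on $L^1(\mathbb R^{n-1})$ applied to $G(x')e^{-i\pi\gamma|x'|^2}$; and in the non-parallel case the completion of the square plus the choice $G(x_1,x'')=h_1(x_1-c/2)h_2(x'')$ with $h_1$ odd makes the $x_1$-integrand odd against the even factor $e^{-i\pi\xi_n x_1^2}$, killing $\hat\mu$ on all of $\Lambda$ while $\mu\neq0$ (the anisotropic scaling $T_\lambda$ you mention is not even needed, since the construction works for every $c$). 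This is very much in the spirit of the techniques the present paper does use elsewhere: the odd-function cancellation is the same mechanism as in Lemmas \ref{lemma15} and \ref{lemma1} and in Sjolin's treatment of the parabola (Theorem \ref{th11}, \cite{S2}), whereas \cite{V2} argues directly with the Fourier transform of measures on the paraboloid. One small point of care: to exhibit $\mu$ as an element of $X(\Gamma)$ you should record that $d\mu=f\,d\sigma$ with $f=G/\sqrt{1+4|x'|^2}$, so that finiteness of $\mu$ is exactly $G\in L^1(\mathbb R^{n-1})$; this is immediate for your choice of $G$ but worth stating.
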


Let $\Gamma$ denote a system of three parallel lines in the plane that can be
expressed as $\Gamma=\mathbb R\times\{\alpha,\beta,\gamma\},$ where
$\alpha<\beta<\gamma$ and $(\gamma-\alpha)/(\beta-\alpha)\in\mathbb N.$
By the invariance properties of HUP, one can assume that $\Gamma=\mathbb R\times \{0,1,p\},$
for some $p\in\mathbb N$ with $p\geq2.$ The following characterization for
the Heisenberg uniqueness pairs corresponding to the above mentioned three
parallel lines has been given by D. B. Babot \cite{Ba}.

\begin{theorem}\cite{Ba}\label{th3}
Let $\Gamma= \mathbb R\times \{0,1,p\},$ for some $p\in\mathbb{N}$ with $p\geq2$
and $\Lambda\subset\mathbb R^2$ a closed set which is $2$-periodic with respect
to the second variable. Then $\left(\Gamma,\Lambda\right)$ is a HUP if and only
if the set
\begin{equation}\label{exp23}
\widetilde\Lambda= {\Pi^{3}(\Lambda)}\cup\left[{{\Pi^{2}(\Lambda)}\smallsetminus{\Pi^{2^\ast}(\Lambda)}}\right]
\cup\left[{{\Pi^{1}(\Lambda)}\smallsetminus{\Pi^{1^\ast}(\Lambda)}}\right]
\end{equation}
is dense in $\mathbb R$.
\end{theorem}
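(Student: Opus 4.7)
The plan is to move the problem to the frequency side. Any $\mu \in X(\Gamma)$ decomposes as $d\mu = f_0(x)\,dx\otimes\delta_0 + f_1(x)\,dx\otimes\delta_1 + f_p(x)\,dx\otimes\delta_p$ with $f_0,f_1,f_p \in L^1(\mathbb R)$, so
\[\hat\mu(\xi,\eta)=\hat f_0(\xi)+e^{-i\pi\eta}\hat f_1(\xi)+e^{-ip\pi\eta}\hat f_p(\xi),\]
which is $2$-periodic in $\eta$ (since $p\in\mathbb N$), matching the assumed periodicity of $\Lambda$. For fixed $\xi$, putting $z=e^{-i\pi\eta}$, the condition $\hat\mu|_\Lambda=0$ is equivalent to the trinomial $P_\xi(z)=\hat f_0(\xi)+\hat f_1(\xi)\,z+\hat f_p(\xi)\,z^p$ vanishing on the set of unit-circle points produced by the slice $\Lambda_\xi=\{\eta:(\xi,\eta)\in\Lambda\}$. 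In this way the HUP question reduces to a $\xi$-parametrized family of linear systems in the three unknowns $(\hat f_0(\xi),\hat f_1(\xi),\hat f_p(\xi))$.

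The projections $\Pi^{k}$ and $\Pi^{k^\ast}$ in (\ref{exp23}) should be read as bookkeeping for the rank of this linear system: $\Pi^{k}(\Lambda)$ records those $\xi$ whose slice yields at least $k$ equations, and the starred versions remove those $\xi$ for which special algebraic coincidences (the exponents $0,1,p$ are gappy, so a Vandermonde-type determinant for the trinomial can collapse) prevent the $k$ equations from being independent. With this reading, $\widetilde\Lambda$ is precisely the set of $\xi$ on which the slice $\Lambda_\xi$ forces $\hat f_0(\xi)=\hat f_1(\xi)=\hat f_p(\xi)=0$.

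\textbf{Sufficiency.} Assume $\widetilde\Lambda$ is dense. By the previous step $\hat f_0,\hat f_1,\hat f_p$ vanish on $\widetilde\Lambda$; being Fourier transforms of $L^1$ functions they are continuous on $\mathbb R$, hence identically zero, and so $\mu=0$. \textbf{Necessity.} If $\widetilde\Lambda$ is not dense, choose an open interval $I\subset\mathbb R\setminus\overline{\widetilde\Lambda}$. For each $\xi\in I$ the slice constraint system has a nontrivial kernel, and I would select a triple $(a(\xi),b(\xi),c(\xi))$ in this kernel; after a compactly supported smoothing on $I$ that keeps each component the Fourier transform of an $L^1$ function, set $\hat f_0=a,\hat f_1=b,\hat f_p=c$, invert, and assemble a nonzero $\mu$ with $\hat\mu|_\Lambda=0$, contradicting HUP.

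The main obstacle is the necessity direction. The gap in the trinomial $a+bz+cz^p$ splits the failure of full rank into qualitatively different strata --- slices that reduce to one, two, or three effective equations on $S^1$ each fail for different algebraic reasons --- and this is exactly what is encoded in the three-term decomposition $\Pi^{3}\cup[\Pi^{2}\setminus\Pi^{2^\ast}]\cup[\Pi^{1}\setminus\Pi^{1^\ast}]$. Choosing the kernel triple $(a,b,c)$ in a \emph{globally consistent}, sufficiently smooth, compactly supported manner on $I$, nontrivial on a set of positive measure, while respecting the closed structure of $\Lambda$ so that $\hat\mu$ vanishes on \emph{every} point of $\Lambda$ and not merely on generic slices, is a delicate selection problem and constitutes the technical heart of the theorem.
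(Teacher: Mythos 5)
The paper does not actually prove this theorem; it is quoted from Babot's article, and the closest analogue in the paper is the four-lines argument of Section \ref{section3}, whose machinery is precisely what your sketch omits. Your architecture is right (decompose $\mu$ into $f_0,f_1,f_p\in L^1(\mathbb R)$, read $\hat\mu\vert_\Lambda=0$ as a $\xi$-parametrized linear system in $(\hat f_0(\xi),\hat f_1(\xi),\hat f_p(\xi))$, get sufficiency from continuity on a dense set, get necessity by building a counterexample measure on an interval), but you have misread the dispensable sets. $\Pi^{k^\ast}(\Lambda)$ is not a rank or ``algebraic coincidence'' condition: $\Pi^{1^\ast}(\Lambda)$, for instance, consists of those $\xi$ for which $\chi_0(\xi)=e^{\pi i\eta_0(\xi)}$ satisfies a monic polynomial relation whose lower coefficients are \emph{locally Fourier transforms of $L^1$ functions} near $\xi$ (the spaces $L^{E,\xi}_{loc}$ and $P^{1,p}[L^{E,\xi}_{loc}]$). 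A single slice equation in three unknowns never forces all three to vanish by linear algebra; the actual mechanism is Wiener's lemma (Lemma \ref{lemma4}): if $\hat f_p(\xi)\neq0$ one divides the slice equation by $\hat f_p$ in the local Wiener algebra and concludes $\xi$ lies in a dispensable set, so on $\Pi^{1}(\Lambda)\smallsetminus\Pi^{1^\ast}(\Lambda)$ one gets $\hat f_p(\xi)=0$ and then iterates down to $\hat f_1$ and $\hat f_0$. Without this, your claim that $\widetilde\Lambda$ is ``precisely the set of $\xi$ on which the slice forces $\hat f_0=\hat f_1=\hat f_p=0$'' is an assertion of the entire sufficiency direction, not a proof of it.

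The necessity direction has the same defect in sharper form. ``Select a kernel triple and smooth it into Fourier transforms of $L^1$ functions'' does not work: a pointwise selection from the kernels has no reason to lie locally in the Fourier algebra, and the dispensable sets exist exactly to mark where such a realization is possible. The correct construction takes the $\varphi_k\in L^1(\mathbb R)$ witnessing membership in the dispensable set, chooses $f_p\in L^1(\mathbb R)$ with $\hat f_p(\xi)\neq0$ and $\text{supp }\hat f_p$ inside the relevant interval, and sets $f_k=f_p\ast\varphi_k$, as in Lemma \ref{lemma8}. Moreover the interval $I$ avoiding $\widetilde\Lambda$ need not meet only one dispensable set: the $\Pi^{k^\ast}(\Lambda)$ are disjoint and can interlace, so one must first prove analogues of Lemmas \ref{lemma6}--\ref{lemma9} (continuity of the solution functions on the dispensable sets, closedness of $\Lambda$ to pass to limits of the image points $\eta_j$, and a case analysis excluding each interlacing pattern) before any counterexample can be assembled. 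This interlacing analysis, which the paper itself identifies as the crux of the three- and four-lines results, is entirely absent from your proposal.
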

For the notations appeared in Equation (\ref{exp23}), we would like to refer the article \cite{Ba},
as those notations are quite involved. However, the nature of their occurrence can
be understood in the beginning of Section \ref{section3} when we formulate the four
lines problem.
\smallskip

Further, Jaming and Kellay \cite{JK} have given a unifying proof for some of the
Heisenberg uniqueness pairs corresponding to certain algebraic curves.

\begin{theorem}\cite{JK}\label{th13}
Let $\Gamma$ be any of the following curves:
\begin{enumerate}[(i)]
\item the graph of $\psi(t)=|t|^{\alpha},~ t\in\mathbb R,~ \alpha>0;$
\item a hyperbola;
\item a polygon;
\item an ellipse.
\end{enumerate}
Then there exists a set $E\subset\left(-\pi/2,\pi/2\right)\times\left(-\pi/2,\pi/2\right)$
of positive Lebesgue measure such that for $\left(\theta_{1},\theta_{2}\right)\in E,$
the pair $\left(\Gamma,L_{\theta_{1}}\cup L_{\theta_{2}}\right)$ is a HUP.
\end{theorem}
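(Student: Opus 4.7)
The plan is to recast the HUP condition as a fixed-point equation for a transfer operator, in the spirit of the Hedenmalm--Montes-Rodr\'iguez argument for the hyperbola (Theorem \ref{th18}). Write $d\mu = f\,ds$ with $f \in L^1(\Gamma, ds)$ and parametrize $\Gamma$ by $t \mapsto \gamma(t)$. For a line $L_\theta$ through the origin with direction $(\cos\theta,\sin\theta)$, set $\phi_\theta(t) = \gamma_1(t)\cos\theta + \gamma_2(t)\sin\theta$. The vanishing of $\hat\mu$ on $L_\theta$ is precisely the vanishing of the one-dimensional Fourier transform of the pushforward measure $(\phi_\theta)_\ast(f|\gamma'|\,dt)$, so this pushforward is the zero measure. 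Reading this back on $\Gamma$ via the branches of $\phi_\theta^{-1}$ yields a pointwise linear relation among the values of $f$ at the finitely many preimages of each $x$ in the range of $\phi_\theta$.

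Applied simultaneously to two lines $L_{\theta_1}$ and $L_{\theta_2}$, these preimage relations couple values of $f$ at distinct points of $\Gamma$. The natural way to package them is as a piecewise-smooth self-map $T = T_{\theta_1,\theta_2}\colon \Gamma \to \Gamma$, obtained by composing the two branch exchanges, together with its Perron--Frobenius operator $\mathcal L_T$, in such a way that $(\Gamma, L_{\theta_1}\cup L_{\theta_2})$ fails to be a HUP exactly when $\mathcal L_T$ admits a nontrivial $L^1$ fixed point. This transforms the problem from harmonic analysis into an ergodic-theoretic question about the family $\{T_{\theta_1,\theta_2}\}$.

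For each of the four curve types, $T_{\theta_1,\theta_2}$ should, after a suitable reparametrization, reduce to a classical low-complexity system: a circle diffeomorphism for the ellipse, a genuine interval exchange transformation for the polygon, a M\"obius-type map for the hyperbola (parametrize as $(e^t,e^{-t})$, so that $\phi_{\theta}$ becomes an exponential sum), and a piecewise smooth expanding map for the graph of $\psi(t)=|t|^\alpha$ (homogeneity of $\psi$ controls the Jacobians). In each case I would show that its combinatorial invariant --- rotation number, or Rauzy--Veech data --- is a non-constant real-analytic function of $(\theta_1,\theta_2)$. Since the bad parameter values (rational rotation number, or failure of the Keane condition) then form a set of measure zero inside some open subset of $(-\pi/2,\pi/2)^2$, the complement $E$ has positive Lebesgue measure. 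On $E$, unique ergodicity --- via Denjoy's theorem for the smooth cases or Masur--Veech for the polygon --- forces $\mathcal L_T$ to have only the zero fixed point, whence $f\equiv 0$.

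The main obstacle is precisely this ergodic-theoretic step, which has to be done case by case. The ellipse and hyperbola should yield to circle-rotation arguments analogous to those of \cite{HR}, but the polygon case requires navigating the theory of interval exchange transformations and checking that the resulting IET is not conjugate to a rotation with rational parameters (so that Masur--Veech unique ergodicity applies). The curve $\psi(t)=|t|^\alpha$ with non-integer $\alpha$ is the most delicate: the induced map is only piecewise smooth (not piecewise affine), so one must verify that its non-uniform expansion is strong enough to force uniqueness of the invariant density, while simultaneously retaining enough parameter-dependence that generic rotation numbers are attained on a set of positive measure in $(\theta_1,\theta_2)$.
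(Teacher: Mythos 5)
You should be aware at the outset that the paper contains no proof of this statement: Theorem \ref{th13} is quoted verbatim from Jaming and Kellay \cite{JK} as background, so there is no internal argument to compare yours against. Measured against the cited source, your outline is a faithful reconstruction of the ``dynamical system approach'' that the paper names: vanishing of $\hat\mu$ on $L_{\theta}$ forces the pushforward $(\phi_\theta)_\ast(f|\gamma'|\,dt)$ to vanish as a measure, each line therefore imposes an anti-invariance of the density under a branch-exchange involution $\sigma_{\theta}$, the composition $T=\sigma_{\theta_1}\circ\sigma_{\theta_2}$ becomes, after reparametrization, a rotation (ellipse), an interval-exchange-type map (polygon), or a Gauss/M\"obius-type map (hyperbola), and unique ergodicity for a suitable set of angle pairs kills the density. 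That is genuinely the strategy of \cite{JK}, in the same spirit as Theorem \ref{th18} of \cite{HR}.

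Two places in your sketch need repair before this is a proof rather than a program. First, the reduction ``$(\Gamma,L_{\theta_1}\cup L_{\theta_2})$ fails to be a HUP exactly when $\mathcal L_T$ admits a nontrivial $L^1$ fixed point'' is not the right equivalence: what the two projections give you is the pair of anti-invariances $g\circ\sigma_{\theta_j}\cdot|J_{\sigma_{\theta_j}}|=-g$, which is strictly stronger than invariance under the composition $T$. Unique ergodicity only tells you that a $T$-invariant $L^1$ density is constant (compact case) or incompatible with integrability (non-compact case); you must then invoke the anti-symmetry to force the constant to be $0$. If you track only fixed points of $\mathcal L_T$ you discard the sign information and the argument does not close. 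Second, the entire weight of the theorem sits in the ergodic step you explicitly defer: verifying that the rotation number (or Rauzy--Veech data) is nonconstant and that the bad parameters have measure zero is exactly where the Diophantine/Keane-type conditions enter, and it is why the conclusion is only a set $E$ of positive measure rather than full measure for the polygon and the $|t|^{\alpha}$ cases. You correctly identify these as the hard points, but as written the proposal asserts rather than establishes them, so it should be regarded as a correct roadmap to the proof in \cite{JK}, not a self-contained proof.
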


\section{ A review of the Heisenberg uniqueness pairs for the spiral,
hyperbola, circle and exponential curves}\label{section2}
In this section, we will work out some of the Heisenberg uniqueness pairs
corresponding to the spiral, hyperbola, circle and certain exponential
curves by using the basic tools of the Fourier analysis. Though, a complete
characterization for the Heisenberg uniqueness pairs corresponding to
either of the above curves is still open.
\smallskip

First, we prove that the spiral is a Heisenberg uniqueness pair for the anti-spiral.

\begin{theorem}\label{th1}
Suppose  $\Gamma=\left\{\left({e^{-t}\cos t}, {e^{-t}\sin t}\right): t\geq 0\right\}$ is a
spiral and let $\Lambda=\left\{\left({e^{s}\cos s}, {e^{s}\sin s}\right): s\leq0\right\}.$
Then $\left(\Gamma,\Lambda\right)$ is a Heisenberg uniqueness pair.
\end{theorem}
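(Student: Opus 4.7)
The plan is to exploit two features of the geometry: $\Gamma$ is compact (it lies in the closed unit disk since $|e^{-t}|\le 1$), so $\hat\mu$ extends to an entire function on $\mathbb C^2$; and $\Lambda$ accumulates at the origin as $s\to -\infty$, which makes a Taylor expansion of $\hat\mu$ at the origin effective. First I would expand
\[
\hat\mu(\xi,\eta)=\sum_{n=0}^{\infty} P_n(\xi,\eta),
\]
with $P_n$ a homogeneous polynomial of degree $n$, and restrict to $\Lambda$ through its natural parametrization to obtain
\[
F(s):=\hat\mu(e^{s}\cos s,\, e^{s}\sin s) = \sum_{n=0}^{\infty} e^{ns}\,Q_n(s),\qquad Q_n(s):=P_n(\cos s,\sin s).
\]
Each $Q_n$ is a $2\pi$-periodic trigonometric polynomial of degree at most $n$, and the hypothesis $\hat\mu|_\Lambda=0$ forces $F(s)=0$ for every $s\le 0$.

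The core step is induction on $n$ to show that $Q_n\equiv 0$. Cauchy estimates applied to the entire function $\hat\mu$ on a polydisc of radius $\rho>1$ yield a geometric bound $|Q_n(s)|\le C\rho^{-n}$ uniformly in $s\in\mathbb R$, which in turn gives the tail estimate
\[
\Bigl|\sum_{m\ge n+1} e^{ms}\,Q_m(s)\Bigr| \le C'\,e^{(n+1)s},\qquad s\le 0.
\]
Assuming inductively $Q_0=\cdots=Q_{n-1}\equiv 0$, the identity $F(s)=0$ reduces to $|Q_n(s)|\le C'\,e^{s}\to 0$ as $s\to -\infty$. Because $Q_n$ is $2\pi$-periodic, $Q_n(s_0)=Q_n(s_0-2\pi k)\to 0$ as $k\to\infty$, so $Q_n\equiv 0$. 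A homogeneous polynomial that vanishes on the unit circle must vanish on all of $\mathbb R^2$, so $P_n\equiv 0$. Iterating, every $P_n$ is zero, whence $\hat\mu\equiv 0$ and $\mu=0$.

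The main obstacle will be establishing the uniform tail bound on $\sum_{m>n}e^{ms}Q_m(s)$ that powers the peeling step; this is precisely where the compactness of $\Gamma$ (and hence the fact that $\hat\mu$ is entire of exponential type) is used, through Cauchy estimates on its Taylor coefficients. The remaining conceptual ingredient is the use of $2\pi$-periodicity of $Q_n$ to convert the pointwise decay $Q_n(s)\to 0$ at $s=-\infty$ into identical vanishing, which is what closes the induction.
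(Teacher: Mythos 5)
Your argument is correct, but it is a genuinely different proof from the one in the paper. The authors parametrize $\mu$ by a density $g\in L^{1}[0,\infty)$, observe that restricting $\hat\mu$ to the parametrized anti-spiral turns the hypothesis into a convolution equation $(H\ast G)(s)=0$ on $\mathbb R$ with $H$ a bounded function supported on a half-line, and then combine a Donoghue-type result ($h\ast g\equiv 0$ forces $\hat h$ to vanish on $\operatorname{supp}\hat g$) with the Bagchi--Sitaram proposition that a nonzero bounded function supported on $[0,\infty)$ has Fourier transform with full support; this exploits the matched logarithmic-spiral structure of $\Gamma$ and $\Lambda$ and the half-line support of $g$. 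You instead use only that $\Gamma$ lies in the closed unit disc, so $\hat\mu$ is entire with homogeneous Taylor components $P_n$ satisfying $|P_n(\cos s,\sin s)|\le \pi^n\|\mu\|/n!$, and that $\Lambda$ spirals into the origin with exponentially separated scales $e^{ns}$; the peeling induction plus $2\pi$-periodicity then kills each $Q_n$, and a homogeneous polynomial vanishing on the circle vanishes identically. Your tail estimate is sound (for $s\le 0$ and $m\ge n+1$ one has $e^{ms}\le e^{(n+1)s}$, and the factorial decay of the coefficients makes the remaining sum converge), so the induction closes. What your route buys is generality: the spiral shape of $\Gamma$ is irrelevant, and you have in effect shown that the anti-spiral is a uniqueness set for Fourier transforms of arbitrary compactly supported finite measures --- essentially the $NA$-set property the authors mention in Remark 2.12 and exploit in their proof of Theorem 2.11 for the pair (circle, spiral). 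What the paper's convolution route buys is a template that does not need compactness of $\Gamma$, which is why the same mechanism reappears verbatim for the noncompact hyperbola branch in Theorem 2.14.
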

In order to prove Theorem \ref{th1}, we need the following results from \cite{BS,D}.
\begin{theorem}\label{th2}\cite{D}
Let $h$ be a bounded measurable function and $g\in L^1(\mathbb R^n).$ If $h\ast g$
vanishes identically, then $\hat h$ vanishes on the support of $\hat g.$
\end{theorem}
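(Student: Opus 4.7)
My approach would be to reduce the statement to the distributional convolution theorem and then localize the resulting product identity by a Wiener--type approximation argument.

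First I would set up the Fourier side. Because $h\in L^\infty(\mathbb R^n)$ and $g\in L^1(\mathbb R^n)$, the convolution $h\ast g$ lies in $L^\infty\cap C_b(\mathbb R^n)$, and the hypothesis forces $h\ast g\equiv 0$. The function $\hat g$ is continuous and bounded, while $\hat h$ is only a tempered distribution. For any $\phi\in\mathcal S(\mathbb R^n)$, Fubini's theorem (applicable since $|h|\le M$, $g\in L^1$, and $\hat\phi\in L^1$) gives
\begin{equation*}
0=\int(h\ast g)(x)\hat\phi(x)\,dx=\int h(y)\left(\int g(x-y)\hat\phi(x)\,dx\right)dy=\int h(y)\widehat{\phi\hat g}(y)\,dy,
\end{equation*}
the last equality being the Fourier-inversion identity $\int g(x-y)\hat\phi(x)\,dx=\widehat{\phi\hat g}(y)$. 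Morally this is the distributional convolution theorem $\hat h\cdot\hat g=0$ in the form $\langle h,\widehat{\phi\hat g}\rangle=0$ for every $\phi\in\mathcal S$.

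The second step is to convert this into the vanishing of $\hat h$ on the open set $V=\{\xi:\hat g(\xi)\neq 0\}$, which is the distributional meaning of ``$\hat h$ vanishes on $\operatorname{supp}\hat g$.'' Given $\psi\in C_c^\infty(V)$, I would like to write $\psi=\phi\hat g$ for some $\phi\in\mathcal S$, for then the identity above applied with this $\phi$ yields $\langle\hat h,\psi\rangle=\int h\hat\psi=0$. The obstruction is that the natural candidate $\phi=\psi/\hat g$ is only continuous (since $\hat g$ is only continuous) and is not Schwartz. The remedy is approximation: I would produce $\phi_n\in\mathcal S(\mathbb R^n)$ with $\phi_n\hat g\to\psi$ in the Wiener algebra $A(\mathbb R^n)=\mathcal F(L^1)$, equivalently $\widehat{\phi_n\hat g}\to\hat\psi$ in $L^1$. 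That gives
\begin{equation*}
\langle\hat h,\psi\rangle=\int h(y)\hat\psi(y)\,dy=\lim_{n\to\infty}\int h(y)\widehat{\phi_n\hat g}(y)\,dy=0,
\end{equation*}
and since $\psi\in C_c^\infty(V)$ was arbitrary, $\hat h$ vanishes on $V$ as claimed.

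The hard part is precisely this Wiener--type approximation: producing $\phi_n\in\mathcal S$ with $\phi_n\hat g\to\psi$ in $A(\mathbb R^n)$ given only the continuity of $\hat g$. On $\operatorname{supp}\psi$ one has $|\hat g|\ge c>0$ by compactness and continuity, and the function $\psi/\hat g$, extended by zero outside, is continuous and compactly supported inside $V$; a local form of Wiener's inversion lemma is then invoked to conclude $\psi/\hat g\in A(\mathbb R^n)$, after which Schwartz-density in $A$ closes the argument. An equivalent route is to work on the convolution side: one shows by the primary-ideal version of Wiener's Tauberian theorem that every $f\in L^1$ whose Fourier transform is compactly supported in $V$ belongs to the closed $L^1$-ideal generated by $g$, so that $h\ast f=\lim h\ast g\ast k_n=0$; a distributional Fourier transform then gives $\hat h\cdot\hat f=0$, and letting $\hat f$ range over $C_c^\infty(V)$ forces $\hat h=0$ on $V$. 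Either packaging makes the local Wiener inversion the technical heart of the proof.
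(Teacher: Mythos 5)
The paper does not actually prove this statement: Theorem \ref{th2} is quoted verbatim from Donoghue's book \cite{D} and used as a black box in the proofs of Theorems \ref{th1} and \ref{th5}, so there is no in-paper argument to compare against. Your proof is the standard one for this classical fact and is essentially sound: the Fubini computation giving $\langle h,\widehat{\phi\hat g}\rangle=0$ for all $\phi\in\mathcal S$ is correct (up to the normalization constants coming from the paper's $e^{-i\pi x\cdot\xi}$ convention, which are harmless), and the reduction of the conclusion to the division problem $\psi=\phi\hat g$ for $\psi\in C_c^\infty(\{\hat g\neq0\})$ is the right move. The technical heart, as you say, is that $\psi/\hat g\in\mathcal F(L^1)$; mere continuity of $\psi/\hat g$ would not suffice, but the local Wiener inversion lemma (there is $G\in\mathcal F(L^1)$ with $G=1/\hat g$ near $\operatorname{supp}\psi$, whence $\psi G=\psi/\hat g$ lies in the algebra) does the job, and invoking it without proof is reasonable for a result that the paper itself only cites.

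One point deserves emphasis. What your argument establishes is that $\hat h$ vanishes on the \emph{open} set $V=\{\hat g\neq0\}$, equivalently $\operatorname{supp}\hat h\subset Z(\hat g)$, and this is exactly the form in which the paper applies the theorem (``$\operatorname{supp}\hat H\subset Z(\hat G)$''). The literal reading of the statement --- $\hat h$ vanishes on the closed set $\operatorname{supp}\hat g=\overline V$ --- is actually false in general: take $h(x)=e^{i\xi_0\cdot x}$ and $g\in L^1$ with $\hat g(\xi_0)=0$ but $\hat g\neq0$ in a punctured neighborhood of $\xi_0$; then $h\ast g\equiv0$ while $\operatorname{supp}\hat h=\{\xi_0\}\subset\operatorname{supp}\hat g$. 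So your decision to interpret the conclusion distributionally on $V$ is not a convenience but a necessity, and it is the correct reading for the uses made of the theorem in Section \ref{section2}.
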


Let $\mathbb R^n_+=\left\{\left(x_1,\ldots, x_n\right)\in\mathbb R^n: x_j\geq0;~j=1,\ldots,n\right\}.$
The following result had appeared in the article \cite{BS} by Bagchi and Sitaram, p. 421,
as a part of the proof of Proposition $2.1.$

\begin{proposition}\label{pro1}\cite{BS}
Let $h$ be a non-zero bounded Borel measurable function which is supported on $\mathbb R^n_+.$
Then $\text{supp } \hat h=\mathbb R^n.$
\end{proposition}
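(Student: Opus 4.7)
The plan is to argue by contradiction. Suppose $\text{supp}\,\hat h\neq\mathbb R^n$, so $\hat h$ (viewed as a tempered distribution, since $h\in L^\infty$ need not be integrable) vanishes on some nonempty open set $U\subset\mathbb R^n$. The goal is to derive $h=0$ a.e., contradicting the hypothesis that $h$ is nonzero.

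First, I would fix a nontrivial test function $\phi\in C_c^\infty(U)$. By the Paley--Wiener theorem $\hat\phi$ is the restriction to $\mathbb R^n$ of an entire function of exponential type; in particular $\hat\phi\in\mathcal S(\mathbb R^n)$ and its zero set has Lebesgue measure zero. Since $\hat h$ vanishes on $U$, choosing $\epsilon>0$ so small that $\text{supp}(\phi)+B(0,\epsilon)\subset U$ one has, for every $t\in B(0,\epsilon)$,
\[
0=\langle\hat h,\phi(\cdot-t)\rangle=\langle h,\widehat{\phi(\cdot-t)}\rangle
=\int_{\mathbb R^n_+}h(y)\,\hat\phi(y)\,e^{-2\pi i\, t\cdot y}\,dy.
\]
Setting $f(y)=h(y)\hat\phi(y)$, the integrand is in $L^1(\mathbb R^n)$ (bounded times Schwartz) and is supported in $\mathbb R^n_+$, and the displayed identity says $\hat f(t)=0$ throughout a neighborhood of the origin.

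The next step is the key one: use the one-sidedness of $f$ to upgrade this local vanishing of $\hat f$ to global vanishing. Since $f\in L^1(\mathbb R^n)$ is supported in $\mathbb R^n_+$, the formula $\hat f(z)=\int_{\mathbb R^n_+}f(y)e^{-2\pi i\,y\cdot z}\,dy$ defines a bounded holomorphic extension to the open tube $\mathbb R^n-i\,\text{int}(\mathbb R^n_+)$, continuous up to the distinguished boundary $\mathbb R^n$. Fixing $(\xi_2,\dots,\xi_n)$ real, the map $z_1\mapsto\hat f(z_1,\xi_2,\dots,\xi_n)$ is bounded holomorphic on the lower half-plane, continuous on its closure, and vanishes on an interval of $\mathbb R$; by one-variable Schwarz reflection and the identity theorem it vanishes identically. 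Iterating this argument once in each coordinate kills $\hat f$ on all of $\mathbb R^n$, so $f\equiv 0$ almost everywhere.

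Finally, since $\hat\phi$ is a nontrivial entire function of exponential type, its zero set in $\mathbb R^n$ has Lebesgue measure zero; combined with $h\hat\phi=0$ a.e., this forces $h=0$ a.e., the desired contradiction. The principal technical obstacle is the coordinate-by-coordinate reflection in step three; everything else is essentially formal. A short alternative I would mention in passing is to invoke the edge-of-the-wedge theorem comparing $\hat f(z)$ on the lower tube with the identically zero function on the upper tube, which yields the same conclusion in one stroke.
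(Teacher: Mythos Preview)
The paper does not give its own proof of this proposition; it simply records the statement and cites Bagchi--Sitaram \cite{BS}, noting that it appears there inside the proof of their Proposition~2.1. So there is nothing in the present paper to compare against.

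Your argument is correct and is essentially the standard one-sided Paley--Wiener route. The reduction via a test function $\phi\in C_c^\infty(U)$ to an honest $L^1$ function $f=h\hat\phi$ supported in $\mathbb R^n_+$ is clean, and the holomorphic extension of $\hat f$ to the tube $\mathbb R^n-i\,\text{int}(\mathbb R^n_+)$ together with the coordinate-by-coordinate Schwarz reflection/identity-theorem step is exactly what is needed. One small point worth making explicit in the iteration: after the first step you have $\hat f(\xi_1,\xi_2,\dots,\xi_n)=0$ for all $\xi_1\in\mathbb R$ but only for $(\xi_2,\dots,\xi_n)$ in a small cube; at the second step you fix $\xi_1\in\mathbb R$ arbitrary and $(\xi_3,\dots,\xi_n)$ still in that small cube, and free $\xi_2$. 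Spelling this out removes any ambiguity. The final step, that the zero set of the nontrivial real-analytic function $\hat\phi|_{\mathbb R^n}$ has Lebesgue measure zero, is standard.

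In short: your proof is sound, and since the paper offers no proof of its own, yours could serve as the missing justification.
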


\noindent{\em Proof of Theorem \ref{th1}.}
Since $\mu$ is absolutely continuous with respect to the arc length measure on $\Gamma,$
by Radon-Nikodym theorem there exists $f\in L^{1}[0,\infty)$ such that $d\mu=\sqrt{2}f(t)e^{-t}dt.$
Let $g(t)=\sqrt{2}f(t)e^{-t}.$ Then by the finiteness of $\mu,$ it follows that
$g\in L^{1}[0,\infty).$ By hypothesis, $\hat\mu\vert_\Lambda= 0$ implies
\begin{equation}\label{exp4}
\hat\mu(\xi,\eta)=\int_{0}^{\infty}e^{-i\pi e^{-t}(\xi \cos t + \eta\sin t)}d\mu(t)
=\int_{0}^{\infty}e^{-i\pi e^{(s-t)}\cos(t-s)}g(t)dt=0
\end{equation}
for all $( \xi,\eta)\in \Lambda.$ Let $H(t)=e^{ -i\pi e^{t}\cos t}\chi_{[0,\infty)}(t)$
and $G(t)=g(t)\chi_{(0,\infty)}(t).$ Then from (\ref{exp4}), we get $\hat\mu{( \xi,\eta)}=(H\ast G)(s)=0~\forall~s\in\mathbb R.$
In view of Theorem \ref{th2}, we infer that $\text{supp }\hat H\subset Z(\hat G),$
where  $Z(\hat G)$ denotes the zero set of $\hat G.$ As $H$ is a non-zero bounded
Borel measurable function supported in $[0,\infty),$ by Proposition \ref{pro1}
it follows that $\text{supp }\hat{H}=\mathbb R$ and hence $\hat G=0.$ Thus, $\mu=0.$
\smallskip

Next, we work out some of the Heisenberg uniqueness pairs corresponding to certain
exponential curves in the plane. Though, the result is true for a large class of
exponential curves, for the sake of simplicity we prove only for a particular one.

\begin{theorem}\label{th8}
Let $\alpha :\mathbb R\rightarrow\mathbb R_{+}$ be the function defined by
$\alpha(t)=e^{t^2}$ and let $\Gamma=\left\{\left(t, \alpha(t)\right):~ t\in\mathbb R\right\}.$
\smallskip

\noindent $(i)$ If $\Lambda$ is a straight line parallel to the $X$-axis.
Then $\left(\Gamma,\Lambda\right)$ is a HUP.
\smallskip

\noindent $(ii)$ Let $\Lambda=L_{1}\cup L_{2},$ where  $L_j;~j=1,2$ are
any two straight lines parallel to the $Y$-axis. Then $\left(\Gamma,\Lambda\right)$ is a HUP.
\end{theorem}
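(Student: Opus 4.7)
The strategy I would follow closely mirrors the technique used in Theorem~\ref{th1}: absorb the arc-length weight into an $L^1$ density and then reduce the vanishing of $\hat\mu$ on $\Lambda$ to a one-dimensional Fourier inversion problem. Since $\mu \in X(\Gamma)$, the Radon--Nikodym theorem will give $f \in L^1(\mathbb{R},ds)$ with $d\mu = f(t)\sqrt{1+4t^{2}e^{2t^{2}}}\,dt$; I set $g(t)=f(t)\sqrt{1+4t^{2}e^{2t^{2}}}\in L^{1}(\mathbb R)$, so that for every $(\xi,\eta)\in\mathbb R^{2}$
$$\hat\mu(\xi,\eta)=\int_{-\infty}^{\infty} g(t)\, e^{-i\pi t\xi}\, e^{-i\pi e^{t^{2}}\eta}\,dt.$$

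For part (i), take $\Lambda=\mathbb R\times\{\eta_{0}\}$. The hypothesis $\hat\mu(\xi,\eta_{0})=0$ for all $\xi\in\mathbb R$ says exactly that the Fourier transform (in the variable $t$) of the $L^{1}$ function $g(t)e^{-i\pi e^{t^{2}}\eta_{0}}$ is identically zero. Injectivity of the Fourier transform on $L^{1}(\mathbb R)$ then forces $g\equiv 0$ a.e., giving $\mu=0$. This part is essentially immediate.

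For part (ii), let $L_{j}=\{\xi_{j}\}\times\mathbb R$ with $\xi_{1}\neq\xi_{2}$. The hypothesis now reads
$$\int_{-\infty}^{\infty} g(t)\, e^{-i\pi t \xi_{j}}\, e^{-i\pi e^{t^{2}}\eta}\,dt = 0\qquad(\eta\in\mathbb R,\ j=1,2).$$
The natural idea is to push $\eta$ into the role of the Fourier variable by setting $u=e^{t^{2}}$. I would split the $t$-integral into $t>0$ and $t<0$, perform the substitution on each half-line (so $t=\pm\sqrt{\log u}$ and $dt=du/(2u\sqrt{\log u})$), and collect the two branches into a single function on $(1,\infty)$:
$$h_{j}(u)=\frac{1}{2u\sqrt{\log u}}\Bigl[g(\sqrt{\log u})\,e^{-i\pi\sqrt{\log u}\,\xi_{j}}+g(-\sqrt{\log u})\,e^{i\pi\sqrt{\log u}\,\xi_{j}}\Bigr]\chi_{(1,\infty)}(u).$$
A quick reverse change of variables gives $\|h_{j}\|_{L^{1}(\mathbb R)}\le\|g\|_{L^{1}(\mathbb R)}$, so $h_{j}\in L^{1}(\mathbb R)$ and the hypothesis becomes $\widehat{h_{j}}\equiv 0$; hence $h_{j}=0$ a.e.

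Writing $t=\sqrt{\log u}>0$, this yields the $2\times 2$ linear system
\begin{align*}
g(t)\,e^{-i\pi t \xi_{1}}+g(-t)\,e^{i\pi t \xi_{1}}&=0,\\
g(t)\,e^{-i\pi t \xi_{2}}+g(-t)\,e^{i\pi t \xi_{2}}&=0,
\end{align*}
for a.e.\ $t>0$. Its determinant equals $2i\sin\bigl(\pi t(\xi_{2}-\xi_{1})\bigr)$, which (because $\xi_{1}\neq\xi_{2}$) vanishes only on a discrete set, so $g(t)=g(-t)=0$ for a.e.\ $t$, and therefore $\mu=0$. The only genuinely delicate point is the $L^{1}$-bound on $h_{j}$ after the change of variables, because the weight $1/(2u\sqrt{\log u})$ is singular near $u=1$; but the Jacobian $du=2te^{t^{2}}dt$ cancels this singularity exactly, which is the reason the scheme works.
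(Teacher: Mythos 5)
Your proof is correct, and its skeleton---fold the two branches of the graph via the substitution $u=e^{t^{2}}$, land in $L^{1}(1,\infty)$ with the Jacobian cancelling the singularity at $u=1$, and invoke one-dimensional Fourier uniqueness---is the same as the paper's. The one genuine difference is the uniqueness input. Because each $L_{j}$ is a full vertical line you get $\widehat{h_{j}}\equiv 0$ on all of $\mathbb R$ and can finish with plain injectivity of the Fourier transform on $L^{1}(\mathbb R)$; the paper instead isolates this step as Lemma \ref{lemma15}, whose proof runs through the Havin--J\"oricke log-integral criterion (Lemma \ref{lemma3}) for $L^{1}$ functions supported on a half-line. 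That heavier tool is not needed for the theorem as stated, but it buys a strictly stronger statement: $\hat\mu$ vanishing merely on subsets $E_{j}\subset L_{j}$ of positive linear measure already forces $\mu=0$ (compare Theorem \ref{th11}$(iii)$ for the parabola), which your argument does not give. A second, cosmetic difference: the paper normalizes $L_{1}$ to the $Y$-axis, reads off from Lemma \ref{lemma15} that $g$ and $e^{-i\pi x_{o}t}g(t)$ are both odd, and combines; you keep $\xi_{1},\xi_{2}$ general and solve the $2\times 2$ system, whose determinant $2i\sin\bigl(\pi t(\xi_{2}-\xi_{1})\bigr)$ vanishes only on a countable set---exactly the non-degeneracy fact the paper uses in the form that $e^{2i\pi x_{o}t}=1$ holds for only countably many $t$. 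Part $(i)$ is identical in substance: the modulation $e^{-i\pi e^{t^{2}}\eta_{0}}$ is unimodular, so the conclusion $\hat g\equiv 0$ is the same whether one translates $\Lambda$ to the $X$-axis first (as the paper does) or not.
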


In order to prove Theorem \ref{th8}, we need the following two important results
about the uniqueness of Fourier transform. First, we state a result which
can be found in Havin and Joricke \cite{HJ}, p. 36.
\begin{lemma}\cite{HJ}\label{lemma3}
If $\varphi\in L^1(\mathbb R)$ is supported in $[0, \infty)$ and
$\int\limits_{\mathbb R}\log|\hat\varphi|\frac{dx}{1+x^2}=-\infty,$
then $\varphi=0.$
\end{lemma}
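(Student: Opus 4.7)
The plan is to exploit the one-sided support of $\varphi$ to realize $\hat\varphi$ as the boundary trace of a bounded holomorphic function on a half-plane, and then appeal to the classical fact that a non-zero bounded analytic function on a half-plane must have a logarithmically integrable boundary modulus. First, because $\varphi\in L^1(\mathbb R)$ is supported in $[0,\infty)$, the formula
\[
F(z)=\int_0^\infty \varphi(t)\,e^{-itz}\,dt
\]
defines a function on the closed lower half-plane $\{z\in\mathbb C:\mathrm{Im}\,z\leq 0\}$ which is bounded by $\|\varphi\|_1$, holomorphic in the open half-plane, continuous up to the boundary, and which restricts to $\hat\varphi$ on $\mathbb R$ (up to the normalization convention). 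Thus, via Fourier uniqueness on $L^1(\mathbb R)$, proving $\varphi=0$ is equivalent to showing $F\equiv 0$.

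Suppose for contradiction that $F\not\equiv 0$. The classical Poisson--Jensen majorization for bounded holomorphic functions on a half-plane then yields, for every $z_0=x_0+iy_0$ with $y_0<0$,
\[
\log|F(z_0)| \;\leq\; \frac{|y_0|}{\pi}\int_{\mathbb R} \frac{\log|\hat\varphi(t)|}{(t-x_0)^2+y_0^2}\,dt.
\]
This is a standard consequence of the inner/outer factorization in $H^\infty$ of the half-plane, obtained by transporting Jensen's inequality from the unit disk via the Cayley transform. The bound $|\hat\varphi|\leq\|\varphi\|_1$ gives $\log^+|\hat\varphi|\in L^1\!\left(\mathbb R,\tfrac{dt}{1+t^2}\right)$, and combining this with the hypothesis $\int_{\mathbb R}\log|\hat\varphi(t)|(1+t^2)^{-1}\,dt=-\infty$ forces the integral on the right above to equal $-\infty$, since for any fixed $z_0$ the Poisson kernel is uniformly equivalent on $\mathbb R$ to $(1+t^2)^{-1}$. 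Hence $\log|F(z_0)|=-\infty$, i.e.\ $F(z_0)=0$, for every such $z_0$, contradicting the supposition.

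The principal technical ingredient is the Poisson--Jensen majorization, which packages the essential content of Hardy-space theory on a half-plane; this is the step that cannot be bypassed by purely elementary real-variable methods. Once that tool is in hand, the rest is a short deduction: the hypothesis is calibrated precisely so that the Poisson integral of $\log|\hat\varphi|$ blows down to $-\infty$, which diagnoses the sole obstruction to $F$ being non-zero. The decay hypothesis on $\log|\hat\varphi|$ must be weighted by $(1+t^2)^{-1}$ rather than some faster-decaying weight because this is exactly the critical weight appearing in the half-plane Poisson kernel at any fixed interior point.
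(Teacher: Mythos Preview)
Your argument is correct and is essentially the classical Hardy-space proof of this fact. Note, however, that the paper does not actually supply a proof of this lemma: it is quoted as a known result from Havin--J\"oricke \cite{HJ}, p.~36, and used as a black box. What you have written is precisely the standard route one finds in such references---extend $\hat\varphi$ to a bounded holomorphic function on a half-plane via the one-sided support of $\varphi$, and then invoke the log-integrability of boundary values of non-zero $H^\infty$ functions (equivalently, the Poisson--Jensen majorization)---so there is no meaningful methodological difference to discuss.
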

As a consequence of Lemma \ref{lemma3} we prove the following result.
\begin{lemma}\label{lemma15}
Let $g\in L^{1}(\mathbb R)$ and $\alpha :\mathbb R\rightarrow\mathbb R_{+}$ be
defined by $\alpha(t)=e^{t^2}$. Suppose $E\subset\mathbb R$ and $|E|>0.$ Then
\begin{equation}\label{exp15}
\int_{\mathbb R}e^{-i\pi y\alpha(t)}g(t)dt=0
\end{equation}
for all $y\in E$ if and only if  $g$ is an odd function.
\end{lemma}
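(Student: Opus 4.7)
The plan is to reduce the statement to an application of Lemma~\ref{lemma3}. The easy direction is the ``if'': since $\alpha(t)=e^{t^2}$ is even in $t$, the exponential factor $e^{-i\pi y \alpha(t)}$ is even, so if $g$ is odd the integrand is odd and the integral vanishes for every $y\in\mathbb{R}$, in particular on $E$. Thus the entire content of the lemma lies in the converse direction.

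For the ``only if'' direction, I would write $g=g_e+g_o$ with $g_e(t)=(g(t)+g(-t))/2$ and $g_o(t)=(g(t)-g(-t))/2$. The odd component contributes $0$ to the integral for every $y$ by the observation above, so the hypothesis forces
\[
\int_{\mathbb{R}} e^{-i\pi y e^{t^2}} g_e(t)\,dt = 2\int_{0}^{\infty} e^{-i\pi y e^{t^2}} g_e(t)\,dt = 0
\qquad (y\in E).
\]
The goal is then to conclude $g_e\equiv 0$. I would introduce the substitution $u=e^{t^2}$, so that $t=\sqrt{\ln u}$ and $dt=\tfrac{du}{2u\sqrt{\ln u}}$, and define
\[
h(u) = \frac{g_e(\sqrt{\ln u})}{u\sqrt{\ln u}}\,\chi_{[1,\infty)}(u).
\]
A routine change of variables back to the $t$-variable shows $\|h\|_{L^1(\mathbb{R})}\le \|g\|_{L^1(\mathbb{R})}$, so $h\in L^1(\mathbb{R})$, and the integral identity above becomes
\[
\int_{\mathbb{R}} e^{-i\pi y u} h(u)\,du = 0 \qquad (y\in E),
\]
i.e.\ $\hat h$ vanishes on the set $E$ of positive Lebesgue measure, while $h$ itself is supported in $[1,\infty)\subset[0,\infty)$.

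The final step is to invoke Lemma~\ref{lemma3}. Because $|\hat h|\le \|h\|_{L^1}$, the positive part $(\log|\hat h|)^+$ contributes a finite amount to $\int_{\mathbb{R}}\log|\hat h(x)|\,\frac{dx}{1+x^2}$; on the other hand, on $E$ the integrand equals $-\infty$, and $\int_E \frac{dx}{1+x^2}>0$ since $|E|>0$, so the integral equals $-\infty$. Lemma~\ref{lemma3} therefore forces $h\equiv 0$, and unwinding the substitution gives $g_e\equiv 0$, i.e.\ $g$ is odd.

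The main obstacle is really just the bookkeeping of the change of variables $u=e^{t^2}$ and the verification that the resulting function $h$ lies in $L^1$ and is supported on a half-line; once that is set up the result is immediate from Lemma~\ref{lemma3}. The crucial analytic input, namely that an $L^1$ function supported on a half-line whose Fourier transform vanishes on a set of positive measure must itself vanish, is supplied by the logarithmic integral criterion of Havin--Jöricke already recorded in the paper.
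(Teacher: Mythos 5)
Your proposal is correct and follows essentially the same route as the paper: reduce to the even part $F(t)=g(t)+g(-t)$, change variables $u=e^{t^2}$ to produce an $L^1$ function supported on $[1,\infty)$ whose Fourier transform vanishes on $E$, and conclude via the Havin--J\"oricke logarithmic integral criterion (Lemma~\ref{lemma3}). Your only addition is to spell out explicitly why the hypothesis of Lemma~\ref{lemma3} is met (the integrand is $-\infty$ on a set of positive measure while the positive part is controlled by $\|h\|_{L^1}$), a step the paper leaves implicit.
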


\begin{proof}
The left hand side of Equation (\ref{exp15}) can be expressed as
\begin{eqnarray*}
I&=&\int_{-\infty}^{0}e^{-i\pi y\alpha(t)}g(t)dt+\int_{0}^{\infty}e^{-i\pi y\alpha(t)}g(t)dt\\
&=&\int_{0}^{\infty}e^{-i\pi y\alpha(t)}(g(t)+g(-t))dt\\
&=&\int_{0}^{\infty}e^{-i\pi y\alpha(t)}F(t)dt,
\end{eqnarray*}
where $F(t)=g(t)+g(-t)$ for all $t\geq0.$ Clearly $F\in L^{1}(0,\infty)$ and hence
by the change of variables $u=\alpha(t),$ we have
\begin{equation}\label{exp16}
I=\int_1^{\infty}e^{-i\pi x u}F(\sqrt{\log u})\frac{du}{2u\sqrt{\log u}}.
\end{equation}
Let $\varphi(u)={F(\sqrt{\log u})}/{2u\sqrt{\log u}}~\chi_{(1,\infty)}(u).$ Then
$\varphi\in L^{1}(\mathbb R)$ and from (\ref{exp16}) we have $I=\hat{\varphi}(y)=0$ for all $y\in E.$
Since $\hat{\varphi}$ vanishes on the set $E$ of positive Lebesgue measure, by Lemma \ref{lemma3}
it follows that $\varphi=0.$ That is, $F=0$ and hence $g$ is an odd function.
\smallskip

Conversely, if $g$ is an odd function, then (\ref{exp15}) trivially holds.
\end{proof}

\noindent {\em Proof of Theorem \ref{th8}.}
$(i)$ Since $\mu$ is supported on $\Gamma=\{(t, e^{t^2}): t\in\mathbb R\},$
there exists $f\in L^{1}(\mathbb R)$ such that $d\mu=f(t)\sqrt{1+4t^{2}e^{2t^{2}}}~dt$.
Let $g(t)=f(t)\sqrt{1 + 4t^2e^{2t^{2}}}.$ Then by the finiteness of $\mu,$ it
follows that $g\in L^{1}(\mathbb R)$ and
\[\hat\mu{(x, y)}= \int_{\mathbb R}e^{- i\pi\left(xt + ye^{t^2}\right)}g(t)dt.\]
In view of the invariance property $(i),$ we can assume that $\Lambda$ is the $X$-axis.
Hence $\hat\mu\vert_\Lambda=0$ implies that $\hat g(x)=0$ for all $x\in\mathbb R.$
Thus, we conclude that $\mu=0.$

\smallskip

\noindent $(ii)$ By invariance property $(i),$ we can assume $L_1$ is the $Y$-axis and
$L_2$ the line $x=x_o,$ where $x_o\neq0.$ Since $\hat\mu$ vanishes on $L_1,$ by Lemma
\ref{lemma15} it follows that $g$ is odd. Also, $\hat\mu$ vanishes on the line $L_2,$
implies that \[\int_{\mathbb R}e^{-i\pi(x_ot+ye^{t^2})}g(t)dt=0\] for all $y\in\mathbb R.$
In view of Lemma \ref{lemma15}, it follows that $e^{-i\pi x_ot}g(t)$ is an odd function.
Hence $e^{-i\pi x_ot}g(t)=-e^{i\pi x_ot}g(-t).$ Since $g$ is odd, it implies that
$(e^{2i\pi x_ot}-1)g(t)=0.$ As the identity $e^{2i\pi x_ot}=1$ holds only for the
countably many $t,$ we conclude that $g=0.$ Thus, $\mu=0.$

\begin{remark}\label{rk2}
Let $\alpha :\mathbb R\rightarrow\mathbb R_{+}$ be an even smooth function having
finitely many local extrema and $\Gamma=\left\{(t, \alpha(t)): t\in\mathbb R\right\}.$
Then the conclusions of Theorem \ref{th8} would also hold.
\end{remark}

Next, we work out some of the Heisenberg uniqueness pairs corresponding to the circle.
We show that (circle, spiral) is a HUP.

\smallskip

Let $\Gamma=S^1$ denote the unit circle in $\mathbb R^2$. If for $f\in L^{1}(\Gamma),$
we write $f(\theta)$ instead of $f(e^{i\theta}),$ then $f$ is a $2\pi$ periodic function
and $f\in L^{1}[0, 2\pi)$. Let $\mu$ be a finite complex-valued Borel measure in $\mathbb R^2$
which is supported on $\Gamma$ and absolutely continuous with respect to the arc length measure
on $\Gamma$. Then there exists $f\in\ L^1(S^1)$ such that  $d\mu= f(\theta)d\theta$.
Now, we prove the following result.

\begin{theorem}\label{th4}
Let $\Gamma=S^1$ and $\Lambda=\left\{\left({e^{t}\cos t},{e^{t}\sin t}\right):t\leq 0\right\}$
be the spiral. Then $\left(\Gamma,\Lambda\right)$ is a Heisenberg uniqueness pair.
\end{theorem}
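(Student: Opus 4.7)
The plan is to exploit the real-analyticity of $\hat\mu$, which holds because $\mu$ is supported on the compact set $S^{1}$. Writing $d\mu=f(\theta)\,d\theta$ with $f\in L^{1}(S^{1})$, the Fourier transform $\hat\mu$ extends to an entire function on $\mathbb{C}^{2}$; in particular it is real-analytic on $\mathbb{R}^{2}$.

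The essential geometric observation about $\Lambda$ is that the spiral visits every direction infinitely often while contracting to the origin. Explicitly, a point $(e^{t}\cos t,e^{t}\sin t)$ lies on the ray $R_{\phi}=\{r(\cos\phi,\sin\phi):r>0\}$ if and only if $t\equiv\phi\pmod{2\pi}$, and for each $\phi\in[0,2\pi)$ there are infinitely many such $t\leq 0$, namely $t=\phi-2\pi k$ for all sufficiently large positive integers $k$; the corresponding radii $r_{k}=e^{\phi-2\pi k}$ form a geometric sequence tending to $0$.

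Fix any $\phi\in[0,2\pi)$ and set $g_{\phi}(r)=\hat\mu(r\cos\phi,r\sin\phi)$. Then $g_{\phi}$ is real-analytic on $\mathbb{R}$, as the composition of a real-analytic function with a linear map, and by the hypothesis $\hat\mu|_{\Lambda}=0$ it vanishes at every $r_{k}$. Since the $r_{k}$ accumulate at $0$, the one-variable identity theorem forces $g_{\phi}\equiv 0$. As $\phi$ was arbitrary, $\hat\mu$ vanishes on every ray through the origin, hence on all of $\mathbb{R}^{2}$, and injectivity of the Fourier transform on finite Borel measures yields $\mu=0$.

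There is essentially no analytic obstacle once one notices the geometric property of $\Lambda$ above; everything reduces to the one-variable identity theorem. An alternative, more computational route closer to the Jacobi--Anger expansions used earlier in the section would rewrite the restriction of $\hat\mu$ to $\Lambda$ as
\[
F(t)=2\pi\sum_{m\in\mathbb Z}(-i)^{m}c_{m}J_{m}(\pi e^{t})e^{imt}=0,\qquad t\leq 0,
\]
where $c_{m}$ are the Fourier coefficients of $f$, and then inductively kill each pair $c_{\pm m}$ by isolating the $e^{|m|t}$ scale as $t\to-\infty$, using $J_{m}(z)\sim (z/2)^{m}/m!$ as $z\to 0$ together with the fact that a nontrivial trigonometric polynomial of $t$ cannot tend to $0$ at $-\infty$. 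The main place to be slightly careful in this alternative is the uniform $\ell^{1}$-type bound needed to justify term-by-term asymptotics (supplied by $|J_{m}(\pi e^{t})|\leq (\pi/2)^{|m|}/|m|!$ for $t\leq 0$ and the boundedness of the $c_{m}$).
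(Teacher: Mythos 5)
Your proof is correct and takes essentially the same route as the paper's: both use that $\hat\mu$ is real-analytic (since $\mu$ has compact support) and that the spiral meets every line through the origin in a sequence accumulating at the origin, so the one-variable identity theorem forces $\hat\mu\equiv 0$ on each such line and hence on $\mathbb{R}^2$. The only cosmetic difference is the last step, where you invoke injectivity of the Fourier transform directly while the paper instead restricts $\hat\mu$ to a circle of radius $r$ with $J_k(r)\neq 0$ for all $k$ and reads off $\hat f(k)=0$ from the Bessel expansion.
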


\begin{proof}
Since $\mu$ is supported on the unit circle $\Gamma,$ we can write the Fourier transform of $\mu$ by
\[\hat\mu{(x,y)}= \int_{-\pi}^{\pi}e^{-i\pi (x\cos\theta + y\sin\theta)}f(\theta)d\theta.\]
Hence $\hat\mu$ can be extended holomorphically to $\mathbb C^2.$ Thus, the function $F$
defined by
\[F(z_{1}, z_{2})= \int_{-\pi}^{\pi}e^{-i\pi (z_{1}\cos\theta + z_{2}\sin\theta)}f(\theta)d\theta,\]
is  holomorphic on $\mathbb C^2.$ In particular, $ \hat\mu=F\vert_{\mathbb R^2}$
is a real analytic function. Since $\hat\mu$ vanishes on the spiral $\Lambda,$
for any line $L$ which passes through the origin, $\hat\mu\vert_{\Lambda\cap L}=0.$
As $(0,0)$ is a limit point of the set $\Lambda\cap L,$ it follows that $\hat\mu\vert_L=0.$
Since $L$ is arbitrary, we infer that $\hat\mu{(x,y)}=0$ for all $(x,y)\in\mathbb R^2.$

\smallskip

Let $S_r=\{(r\cos t, r\sin t):~ 0\leq t<2\pi\},$ where $J_{k}(r)\neq 0$ for all $k\in\mathbb Z.$
Then $\hat\mu(r\cos t,r\sin t)=0$ implies $h*f(t)=0,$ where $h(t)=e^{-i\pi r\cos t}.$
As we know that the Fourier coefficients of $h$ satisfying $\hat h(k)=i^{k}(-1)^{k}J_{k}(r),$
it follows that $\hat{f}(k)J_{k}(r)=0 $ for all $k\in\mathbb Z$. Since $J_{k}(r)\neq 0$
for all $k\in\mathbb Z,$ $\hat{f}(k)=0$ for all $k\in\mathbb Z$ and hence $f=0.$
\end{proof}

\begin{remark}\label{rk4}
A set which is determining set for any real analytic function is called $NA$ - set.
For instance, the spiral is an $NA$ - set in the plane (see \cite{PS_0}). If $\mu$
is a finite Borel measure supported on a closed and bounded curve $\Gamma,$ then
$\hat\mu$ is real analytic. Thus, $(\Gamma,\text{NA - set})$ is a Heisenberg
uniqueness pair. However, the converse is not true.

\smallskip

Hence, in view of Remarks \ref{rk5} and \ref{rk4} we expect that the exceptional sets
for the Heisenberg uniqueness pairs corresponding to the unit circle $\Gamma=S^1$
are eventually contained in the zero sets of all homogeneous harmonic polynomials
union with the countably many circles whose radii are lying in the zero set of
the certain class of Bessel functions. On the basis of these credible observations,
we are trying to find out a complete characterization of the Heisenberg uniqueness
pairs corresponding to circle which may be presented somewhere else.
\end{remark}

Next, we work out some of the Heisenberg uniqueness pairs corresponding to the
hyperbola. Though in this case, Hedenmalm and Montes-Rodríguez \cite{HR} have
found that some discrete set $\Lambda_{\alpha,\beta}$ is enough for $\left(\Gamma, \Lambda_{\alpha,\beta}\right)$
to be a Heisenberg uniqueness pair. However, our approach is to consider those
sets $\Lambda$ which are essentially a union of continuous curves and located
somewhere else than the set $\Lambda_{\alpha,\beta}.$

\begin{theorem}\label{th5}
Let $\Gamma=\left\{\left(\cosh t, \sinh t\right):t\geq0\right\}$ be a branch of
the hyperbola and $\Lambda=\left\{\left(\cosh s, -\sinh s\right): s\in\mathbb R\right\}.$
Then $\left(\Gamma, \Lambda\right)$ is a HUP.
\end{theorem}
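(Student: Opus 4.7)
The plan is to reduce the condition $\hat\mu\vert_{\Lambda}=0$ to a convolution equation on the line and then analyse it on the Fourier side. Writing $d\mu=g(t)\,dt$ with $g\in L^{1}([0,\infty))$ (absorbing the arc-length factor $\sqrt{\cosh 2t}$ into $g$) and using the identity $\cosh s\cosh t-\sinh s\sinh t=\cosh(t-s)$, the hypothesis becomes
\[
\int_{0}^{\infty}e^{-i\pi\cosh(t-s)}\,g(t)\,dt=0\qquad(s\in\mathbb R).
\]
Set $H(u)=e^{-i\pi\cosh u}$ (bounded and even on $\mathbb R$) and $G(t)=g(t)\chi_{[0,\infty)}(t)\in L^{1}(\mathbb R)$. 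Evenness of $H$ lets me rewrite the identity above as the single convolution equation $(H\ast G)(s)=0$ for every $s\in\mathbb R$, and Theorem \ref{th2} then yields $\mathrm{supp}(\hat H)\subset Z(\hat G)$, the zero set of the continuous function $\hat G$.

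Because $G\in L^{1}$ is supported in the half line $[0,\infty)$, $\hat G$ is the boundary value on $\mathbb R$ of a function bounded by $\|G\|_{1}$ and holomorphic in the lower half plane $\{\mathrm{Im}\,\xi<0\}$. By the Luzin--Privalov boundary uniqueness theorem (essentially the F.\ and M.\ Riesz theorem applied to $H^{\infty}$), if $\hat G$ vanishes on any real subset of positive Lebesgue measure then $\hat G\equiv 0$, whence $g\equiv 0$ and $\mu=0$. It therefore suffices to show that the closed set $\mathrm{supp}(\hat H)$ has positive Lebesgue measure.

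This is the main obstacle, since $H$ is bounded but not integrable and $\hat H$ exists only as a tempered distribution. I would prove that $\hat H$ is actually represented by a smooth, nowhere vanishing function on $\mathbb R$. The total phase $\phi(u,\xi)=-\pi\cosh u-\pi\xi u$ has a unique nondegenerate stationary point $u_{\xi}=-\sinh^{-1}\xi$ with $\phi_{uu}(u_{\xi},\xi)=-\pi\sqrt{1+\xi^{2}}\neq 0$. Applying the stationary phase method to the Gaussian regularisations
\[
\hat H_{\varepsilon}(\xi)=\int_{\mathbb R}e^{-i\pi\cosh u-i\pi\xi u-\varepsilon u^{2}}\,du
\]
and letting $\varepsilon\downarrow 0$ produces a nonzero continuous limit at every $\xi\in\mathbb R$, uniformly on compact sets: the contribution from a neighbourhood of $u_{\xi}$ is the standard stationary phase constant times $(1+\xi^{2})^{-1/4}$, and the non-stationary tails are controlled by iterated integration by parts against $\phi_{u}=-\pi(\sinh u+\xi)$. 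Equivalently, the Schl\"afli integral $2K_{\nu}(z)=\int_{\mathbb R}e^{-z\cosh u-\nu u}\,du$, analytically continued to $z=i\pi$, $\nu=i\pi\xi$, identifies $\hat H(\xi)$ with a nonvanishing multiple of the Hankel function $H^{(2)}_{i\pi\xi}(\pi)$, which is smooth and nowhere zero for real $\xi$. Either route gives $\mathrm{supp}(\hat H)=\mathbb R$, forcing $\hat G\equiv 0$ and hence $\mu=0$.
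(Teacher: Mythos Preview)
Your reduction to a convolution equation and the appeal to Theorem~\ref{th2} match the paper exactly; the routes diverge only after that. You keep $H(u)=e^{-i\pi\cosh u}$ on all of $\mathbb R$, put the half-line support on $G$, invoke boundary uniqueness for $\hat G$, and are then forced to analyse the tempered distribution $\hat H$ by stationary phase and special functions. The paper instead takes $H(t)=e^{-i\pi\cosh t}\chi_{[0,\infty)}(t)$, so that $H$ itself is half-line supported, and then Proposition~\ref{pro1} (the Bagchi--Sitaram argument) gives $\mathrm{supp}\,\hat H=\mathbb R$ in one line, with no oscillatory-integral or Hankel-function analysis at all. The paper's device is considerably more economical; your route trades a soft Fourier-analytic fact for genuine hard analysis.

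There is also a gap in your Step~5. Stationary phase is an asymptotic method in a large parameter, but the phase $-\pi\cosh u-\pi\xi u$ carries none, so the ``standard stationary phase constant times $(1+\xi^{2})^{-1/4}$'' is not the value of $\hat H(\xi)$ but only a formal leading term, and nothing prevents cancellation with the remainder. Likewise, the assertion that $H^{(2)}_{i\pi\xi}(\pi)$ is nowhere zero for real $\xi$ is a nontrivial statement about Bessel functions that you have not justified. What your regularisation argument \emph{does} give, once the tail integrations by parts are carried out carefully, is that $\hat H$ is a continuous function of $\xi$; combined with $H\neq 0$ this already yields that $\mathrm{supp}\,\hat H$ has nonempty interior, hence positive measure, which is all your boundary-uniqueness step requires. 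So your argument can be repaired by dropping the nowhere-vanishing claim and keeping only continuity --- but the paper's truncation of $H$ avoids the issue entirely.
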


\begin{proof}
Since $\mu$ is supported on $\Gamma,$ there exists $f\in L^{1}[0,\infty)$ such that
$d\mu=f(t)\sqrt{\cosh 2t}~dt$. If we write $g(t)=f(t)\sqrt{\cosh 2t},$ then $g\in L^{1}[0,\infty)$ and
\[\hat\mu{(x, y)}= \int_0^\infty e^{- i\pi(x\cosh t + y\sinh t)}g(t)dt.\]
By hypothesis, $\hat\mu\vert_\Lambda= 0$ implies
\begin{equation}\label{exp5}
\hat\mu{(x,y)}=\int_{0}^{\infty}e^{-i\pi \cosh(t-s)}g(t)dt=0
\end{equation}
for all $(x,y)\in \Lambda.$ Let $H(t)=e^{-i\pi {\cosh t}}\chi_{[0,\infty)}(t)$ and
$G(t)=g(t)\chi_{(0,\infty)}(t).$ Then from (\ref{exp5}) we get $\hat\mu{(x,y)}=(H*G)(s)=0$
for all $s\in\mathbb R.$ In view of Theorem \ref{th2}, it follows that $\text {supp }\hat H\subset Z(\hat G).$
Hence by Proposition \ref{pro1}, $\text{supp }\hat{H}=\mathbb R.$ Thus, we conclude that $G=0.$
\end{proof}

\begin{theorem}\label{th6}
Let $\Gamma=\left\{\left(\cosh t, \sinh t\right) : t\in\mathbb R\right\}$ and
$\Lambda=L_{1}\cup L_{2},$ where $L_j;~j=1,2$  are any two lines parallel to
the $X$-axis. Then $\left(\Gamma,\Lambda\right)$ is a HUP.
\end{theorem}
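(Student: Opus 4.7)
The plan is to reduce $\hat\mu\vert_{\Lambda}=0$ to a $2\times 2$ linear system at almost every $u>1$ and show that the coefficient matrix is invertible off a null set. First I would set up coordinates as in the proof of Theorem \ref{th5}: by Radon--Nikodym write $d\mu=f(t)\sqrt{\cosh 2t}\,dt$, put $g(t)=f(t)\sqrt{\cosh 2t}\in L^{1}(\mathbb R)$, and, using the translation invariance of HUP, take $L_{j}=\{(x,y_{j}):x\in\mathbb R\}$ with $y_{1}\ne y_{2}$. The hypothesis becomes
\[\int_{\mathbb R}e^{-i\pi x\cosh t}\,e^{-i\pi y_{j}\sinh t}\,g(t)\,dt=0\qquad(x\in\mathbb R,\ j=1,2).\]

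Next I would split the integral over $t\ge 0$ and $t<0$ and perform the change of variable $u=\cosh t$ on each piece, using $\sinh t=\pm\sqrt{u^{2}-1}$. Setting $h_{\pm}(u)=g(\pm\cosh^{-1}u)/\sqrt{u^{2}-1}$ (which lie in $L^{1}([1,\infty))$ because $\int|g(t)|\,dt<\infty$), the relation above rewrites as
\[\int_{1}^{\infty}e^{-i\pi xu}\bigl[e^{-i\pi y_{j}\sqrt{u^{2}-1}}h_{+}(u)+e^{i\pi y_{j}\sqrt{u^{2}-1}}h_{-}(u)\bigr]\,du=0\]
for every $x\in\mathbb R$. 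Interpreting the left-hand side as the Fourier transform (in $x$) of an $L^{1}$ function supported on $[1,\infty)$, uniqueness of the Fourier transform forces the bracket to vanish for a.e.\ $u>1$ and each $j=1,2$.

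The two vanishing identities form a linear system for $(h_{+}(u),h_{-}(u))$ whose determinant equals $2i\sin\!\bigl(\pi(y_{2}-y_{1})\sqrt{u^{2}-1}\bigr)$. Since $y_{1}\ne y_{2}$, this vanishes only on the countable set where $(y_{2}-y_{1})\sqrt{u^{2}-1}\in\mathbb Z$, hence the matrix is invertible almost everywhere. Therefore $h_{+}=h_{-}=0$ a.e., whence $g=0$ and $\mu=0$.

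The main obstacle I anticipate is that the convolution/Proposition \ref{pro1} route used in Theorem \ref{th5} does not apply cleanly here: for the full branch $t\in\mathbb R$ one cannot write $\hat\mu\vert_{L_{j}}$ as a single one-sided convolution, because the map $t\mapsto(\cosh t,\sinh t)$ is two-to-one after projecting to the first coordinate. The device that resolves this is precisely the splitting into $t\ge0,\,t<0$ (producing the two unknowns $h_{+},h_{-}$) and the presence of two distinct lines (producing the two equations); the only verification to be careful about is that $h_{\pm}\in L^{1}([1,\infty))$ so that the standard Fourier uniqueness theorem applies.
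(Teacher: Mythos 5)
Your proof is correct and is essentially the paper's argument in different clothing: the paper applies its Lemma~\ref{lemma1} to each line to conclude that $g(t)$ and $e^{-i\pi y_o\sinh t}g(t)$ are both odd, which after the substitution $u=\cosh t$ is exactly your $2\times2$ system in $(h_+,h_-)$, and the paper's identity $(e^{2i\pi y_o\sinh t}-1)g(t)=0$ is precisely your determinant $2i\sin\bigl(\pi(y_2-y_1)\sqrt{u^2-1}\bigr)$ vanishing only on a countable set. The only cosmetic differences are that you avoid normalizing $y_1=0$ and need only plain $L^1$ Fourier injectivity rather than the one-sided Lemma~\ref{lemma3}, since the lines give vanishing for all $x\in\mathbb R$.
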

We need the following result in order to prove Theorem \ref{th6}.
\begin{lemma}\label{lemma1}
Let $g\in L^{1}(\mathbb R)$ and $E\subset\mathbb R$ such that $|E|>0.$ Then
\begin{equation}\label{exp6}
\int_{\mathbb R}e^{-i\pi x\cosh t}g(t)dt=0
\end{equation}
for all $x\in E$  if and only if $g$ is an odd function.
\end{lemma}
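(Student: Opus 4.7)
The structure here is almost identical to Lemma \ref{lemma15}, with $\cosh t$ replacing $e^{t^2}$, so the plan is to mimic that proof. The \emph{if} direction is immediate: when $g$ is odd, the integrand $e^{-i\pi x\cosh t}g(t)$ is odd in $t$ (since $\cosh$ is even), so the integral over $\mathbb R$ vanishes for every $x$, in particular on $E$.

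For the \emph{only if} direction, the plan is to symmetrize and then reduce to the Fourier transform of a one-sided $L^{1}$ function. First, I would split the integral at $0$ and substitute $t\mapsto-t$ in the piece over $(-\infty,0]$ to rewrite
\[
\int_{\mathbb R} e^{-i\pi x\cosh t}g(t)\,dt \;=\; \int_{0}^{\infty} e^{-i\pi x\cosh t}F(t)\,dt,
\]
where $F(t)=g(t)+g(-t)$. Clearly $F\in L^{1}[0,\infty)$. Next I would perform the change of variables $u=\cosh t$ on $[0,\infty)$, which is a smooth bijection onto $[1,\infty)$ with $du=\sinh t\,dt=\sqrt{u^{2}-1}\,dt$. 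This transforms the integral into
\[
\int_{1}^{\infty} e^{-i\pi x u}\,\frac{F(\cosh^{-1} u)}{\sqrt{u^{2}-1}}\,du \;=\; \widehat{\varphi}(x),
\]
where $\varphi(u)=\dfrac{F(\cosh^{-1} u)}{\sqrt{u^{2}-1}}\,\chi_{(1,\infty)}(u)$. The same change of variables (run backwards) shows $\|\varphi\|_{1}=\|F\|_{L^{1}[0,\infty)}<\infty$, so $\varphi\in L^{1}(\mathbb R)$ and is supported in $[1,\infty)\subset[0,\infty)$.

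By hypothesis $\widehat{\varphi}=0$ on $E$ with $|E|>0$; hence $\log|\widehat{\varphi}|\equiv-\infty$ on a set of positive Lebesgue measure, which forces
\[
\int_{\mathbb R}\log|\widehat{\varphi}(x)|\,\frac{dx}{1+x^{2}}=-\infty.
\]
Lemma \ref{lemma3} then yields $\varphi=0$, so $F\equiv 0$, i.e.\ $g(-t)=-g(t)$ a.e., and $g$ is odd.

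I do not anticipate a substantive obstacle: the only points requiring a little care are the $L^{1}$-bookkeeping under the change of variables $u=\cosh t$ (handled by the Jacobian $\sqrt{u^{2}-1}$ canceling against the one introduced from $F$) and the verification that vanishing on a positive-measure set is enough to trigger Lemma \ref{lemma3}, both of which are routine and parallel the treatment in Lemma \ref{lemma15}.
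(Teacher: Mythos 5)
Your proposal is correct and follows essentially the same route as the paper: symmetrize to $F(t)=g(t)+g(-t)$ on $[0,\infty)$, change variables $u=\cosh t$ to realize the integral as $\widehat{\varphi}(x)$ for a one-sided $\varphi\in L^{1}(\mathbb R)$, and invoke Lemma \ref{lemma3} to force $\varphi=0$. The only difference is that you spell out the $L^{1}$-bookkeeping and the passage from vanishing on a positive-measure set to the divergent log-integral slightly more explicitly than the paper does.
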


\begin{proof}
The left-hand side of Equation (\ref{exp6}) can be expressed as
\begin{eqnarray*}
I&=&\int_{-\infty}^{0}e^{-i\pi x\cosh t}g(t)dt+\int_{0}^{\infty}e^{-i\pi x\cosh t}g(t)dt\\
&=&\int_{0}^{\infty}e^{-i\pi x\cosh t}(g(t)+g(-t))dt  \\
&=&\int_{0}^{\infty}e^{-i\pi x\cosh t}F(t)dt,
\end{eqnarray*}
where $F(t)=g(t)+g(-t)$ for all $t\geq0.$ Clearly $F\in L^{1}(0,\infty).$
By change of variables $u=\cosh t,$ we get
\begin{equation}\label{exp7}
I=\int_1^{\infty}e^{-i\pi xu}F(\cosh ^{-1}u)\frac{du}{\sqrt{u^{2}-1}}.
\end{equation}
If we substitute $\varphi(u)=F(\cosh ^{-1}u)/\sqrt{u^{2}-1}~\chi_{(1,\infty)},$
then $\varphi\in L^{1}(\mathbb R)$ and $I=\hat{\varphi}(x)=0$ for all $x\in E.$
Hence by Lemma \ref{lemma3}, it follows that $\varphi=0.$ Thus, we infer that
$g$ is an odd function.
\smallskip

Conversely, suppose $g$ is an odd function, then (\ref{exp6}) trivially holds.
\end{proof}

\noindent {\em Proof of Theorem \ref{th6}.}
By invariance property $(i),$ we can assume that $L_{1}$ is the $X$-axis and $L_{2}$
the line $y=y_o,$ where $y_o\neq0.$ Since $\mu$ is supported on the hyperbola
$\Gamma,$ there exists $f\in L^{1}(\mathbb R)$ such that $d\mu=f(t)\sqrt{\cosh 2t}~dt$.
Let $g(t)= f(t)\sqrt{\cosh 2t},$ then $g\in L^{1}(\mathbb R).$ Hence in view of Lemma
\ref{lemma1}, $\hat\mu$ vanishes on $L_1$ implies that $g$ is an odd function. Further,
$\hat\mu\vert_{L_2}=0$ implies that
\[\int_{\mathbb R}e^{-i\pi (x\cosh t+y_o\sinh t)}g(t)dt=0\]
for all  $x\in\mathbb R.$ Then by Lemma \ref{lemma1} the function $e^{-i\pi y_o\sinh t}g(t)$
will be an odd function. Hence $e^{-i\pi y_o\sinh t}g(t)=-e^{i\pi y_o\sinh t}g(-t).$ As $g$ is
an odd function, it follows that $\left(e^{2i\pi y_o\sinh t}-1\right)g(t)=0.$ Using the fact that
$e^{2i\pi y_o\sinh t}=1$ holds only for the countably many values of $t,$ we conclude that $g=0.$

\begin{theorem}\label{th7}
Let $\Gamma=\left\{\left(\cosh t, \sinh t\right):~t\in\mathbb R\right\}$ and $\Lambda=L_{1}\cup L_{2},$
where $L_j;~j=1,2$ are any two straight lines which intersect at an angle $\alpha\in (0,\frac{\pi}{4}).$
Then $\left(\Gamma,\Lambda\right)$ is a HUP.
\end{theorem}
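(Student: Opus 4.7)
The approach mirrors that of Theorem \ref{th6}: write $d\mu = g(t)\,dt$ with $g \in L^{1}(\mathbb R)$ (possible since $\mu$ is absolutely continuous with respect to arc length and finite), and use the vanishing of $\hat\mu$ on each of the two lines to extract a symmetry or vanishing condition on $g$ via Lemma \ref{lemma1}; combining these will force $g \equiv 0$. By invariance property (i), translate so that $L_1 \cap L_2 = \{(0,0)\}$, and denote by $\theta_j \in [0,\pi)$ the angle of $L_j$ with the $x$-axis. Then $\hat\mu|_{L_j} = 0$ is equivalent to
\[
\int_{\mathbb R} e^{-i\pi r\,\phi_{\theta_j}(t)}\,g(t)\,dt = 0, \qquad r \in \mathbb R,
\]
where $\phi_\theta(t) = \cos\theta \cosh t + \sin\theta \sinh t = \tfrac{1}{2}\bigl((\cos\theta + \sin\theta)e^{t} + (\cos\theta - \sin\theta)e^{-t}\bigr)$. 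The behavior of $\phi_\theta$ splits according to the sign of $\cos 2\theta$.

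In the principal case both $|\tan\theta_j| < 1$, so $\phi_{\theta_j}(t) = c_j \cosh(t + \beta_j)$ for some $c_j > 0$ and $\beta_j \in \mathbb R$ with $\tanh\beta_j = \tan\theta_j$. Substituting $u = t + \beta_j$ and applying Lemma \ref{lemma1}, the map $u \mapsto g(u - \beta_j)$ is odd, i.e.\ $g(t) = -g(-t - 2\beta_j)$ for $j = 1, 2$. Comparing the two identities yields $g(s) = g(s + 2(\beta_2 - \beta_1))$ for all $s \in \mathbb R$, so $g$ is periodic with period $2(\beta_2 - \beta_1) \neq 0$ (since the lines are distinct). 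As no nonzero periodic function is integrable on $\mathbb R$, one concludes $g \equiv 0$.

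If instead $|\tan\theta_j| \geq 1$ for some $j$, then a single line suffices. When $|\tan\theta_j| > 1$, the function $\phi_{\theta_j}$ is a strictly monotone bijection $\mathbb R \to \mathbb R$, of the form $c \sinh(t + \beta)$; the substitution $u = \phi_{\theta_j}(t)$ turns the vanishing condition into $\hat{\tilde g} \equiv 0$ for some $\tilde g \in L^{1}(\mathbb R)$, whence $\tilde g = 0$ and $g = 0$. When $|\tan\theta_j| = 1$, $\phi_{\theta_j}$ is a constant multiple of $e^{\pm t}$ and a similar substitution yields an $L^{1}$ function supported on $(0, \infty)$ with identically vanishing Fourier transform; Lemma \ref{lemma3} (or direct Fourier uniqueness) then forces $g = 0$.

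The central point, and main obstacle, is the principal case: one must recognize that the two oddness conditions from Lemma \ref{lemma1} combine to produce a genuine translation symmetry of $g$ rather than a mere reflection symmetry, and then invoke the basic but crucial fact that a nontrivial periodic function cannot be integrable on $\mathbb R$. The hypothesis $\alpha \in (0, \pi/4)$ guarantees that the two lines are distinct (so the derived period is nonzero); in a streamlined variant one first normalizes $L_1$ to the $x$-axis via a hyperbolic rotation $T_s$ (using invariance (ii) and the fact that hyperbolic rotations preserve $\Gamma$), and the restriction $\alpha < \pi/4$ keeps the rotated $L_2$ in the regime $|\tan\theta_2| < 1$ where Lemma \ref{lemma1} applies uniformly, reducing everything to the principal case.
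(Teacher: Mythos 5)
Your proof is correct, and in the principal case (both lines lying in the sectors $|\tan\theta_j|<1$) it is essentially the paper's argument: each line yields, via Lemma \ref{lemma1}, oddness of $g$ about a center determined by $\tanh\beta_j=\tan\theta_j$, and oddness about two distinct centers forces $g$ to be periodic with nonzero period, hence $g=0$ because no nontrivial periodic function is integrable. (The paper normalizes $L_1$ to the $X$-axis, so its center is $0$ and the second center is $t_o$ with $\tan\alpha=-\tanh t_o$; you work symmetrically with two centers $\beta_1,\beta_2$, which is only cosmetically different.) What you add, and what the paper glosses over, is the treatment of lines with $|\tan\theta_j|\ge 1$: a Euclidean rotation does not preserve the hyperbola, and the hyperbolic rotations that do preserve $\Gamma$ can carry $L_1$ to the $X$-axis only when $|\tan\theta_1|<1$, so the paper's ``without loss of generality'' silently excludes configurations permitted by the hypothesis (e.g.\ both lines nearly vertical and meeting at a small angle). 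Your observation that a single line with $|\tan\theta|\ge 1$ already forces $g=0$ --- since $\phi_\theta$ is then a bijection of $\mathbb R$ or a pure exponential, so the vanishing condition becomes the identical vanishing of the Fourier transform of an $L^1$ function --- closes that gap cleanly and in fact establishes the stronger statement for any two distinct concurrent lines. One caveat: your closing remark that the restriction $\alpha<\pi/4$ keeps the hyperbolically rotated $L_2$ in the regime $|\tan\theta_2|<1$ is not correct; hyperbolic rotations do not preserve Euclidean angles, and even before rotating, two lines meeting at angle less than $\pi/4$ can straddle an asymptote (take $\theta_1=\pi/4-\epsilon$ and $\theta_2=\pi/4+\epsilon$). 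Since your main argument never invokes that remark --- the case split on the sign of $\cos 2\theta_j$ already covers every configuration --- the proof stands as written.
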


\begin{proof}
Without loss of generality, we can assume that $L_1$ is the $X$-axis and
$L_2=\{(s\cosh t_o, -s\sinh t_o): s\in\mathbb R\},$ where $\tan\alpha=-\tanh t_o.$
Since $\mu$ is supported on the hyperbola $\Gamma,$ as similar to Theorem \ref{th6}
there exists $g\in L^{1}(\mathbb R)$ such that $d\mu=g(t)dt.$ Suppose $\hat\mu=0$ on
$\Lambda,$ then we have \[\int_{\mathbb R}e^{-i\pi (x\cosh t+y\sinh t)}g(t)dt=0\]
for all $(x,y)\in L_2.$ This in turn implies \[\int_{\mathbb R}e^{-i\pi s\cosh t}g(t+t_o)dt=0\]
for all $s\in\mathbb R.$ In view of Lemma \ref{lemma1}, it follows that $g(t_o+\cdot)$ must be
an odd function. Since $\hat\mu$ is also vanishing on the $X$-axis, $g$ will be odd. Hence
$g\left(2t_o\pm t\right)=g(t)$ for all $t\in\mathbb R.$ That is, $g$ is a periodic function
contained in $L^1(\mathbb R).$ Thus, we conclude that $g=0.$
\end{proof}

\begin{remark}\label{rk1}
${\bf(a).}$ Let $\Gamma$ be the hyperbola and $\Lambda$ a straight line parallel to the
$X$-axis. Then $\left(\Gamma,\Lambda\right)$ is not a HUP. Consider $g=\sqrt{\cosh 2t}\sin t~\chi_{(-\pi, \pi)}$
and $d\mu=g(t)dt.$ Then $\hat\mu$ vanishes on $\Lambda.$

\smallskip

\noindent ${\bf(b).}$ We would like to mention that Theorem \ref{th7} is contained well
in the case $(ii)$ of Theorem \ref{th13} due to Jaming and Kellay \cite{JK}. However,
our approach for proof of Theorem \ref{th7} is quite different.
\end{remark}

\section{Heisenberg uniqueness pairs corresponding to the four parallel lines}\label{section3}
A  characterization of the Heisenberg uniqueness pairs corresponding to any two
parallel straight lines have been done by Hedenmalm et al. \cite{HR}. Further,
D. B. Babot \cite{Ba} has worked out an analogous result for a certain system
of three parallel lines. In this section, we prove a characterization of the
Heisenberg uniqueness pairs corresponding to a certain system of four parallel
lines. In the above case, we observe the phenomenon of interlacing of three
totally disconnected sets.
\smallskip

Let $\Gamma_o$ denote a system of four parallel lines that can be expressed as
$\Gamma_o=\mathbb R\times\{\alpha,\beta,\gamma,\delta\},$ where $\alpha<\beta<\gamma<\delta,~$
$p=(\delta-\alpha)/(\beta-\alpha)\in\mathbb N\smallsetminus\{1,2\}$ and $(\gamma-\alpha)/(\beta-\alpha)=2.$
If $(\Gamma_o, \Lambda_o)$ is a HUP, then by using invariance property $(i),$
$(\Gamma_o, \Lambda_o)$ can be reduced to $\left(\Gamma_o-(0,\alpha), \Lambda_o\right).$
Since scaling can be thought as a diagonal matrix, by using invariance property $(ii),$
$\left(\Gamma_o-(0,\alpha), \Lambda_o\right)$ can be reduced to $\left(T^{-1}(\Gamma_o-(0,\alpha)), T^\ast\Lambda_o\right),$
where $T=\text{diag}\{(\beta-\alpha),(\beta-\alpha)\}.$ Let $\Lambda=T^\ast\Lambda_o$ and
$\Gamma=T^{-1}(\Gamma_o-(0,\alpha)).$ Then $\Gamma=\mathbb R\times\{0,1,2,p\},$ where
$p\in\mathbb N$ with $p\geq3.$ Thus, $(\Gamma_o, \Lambda_o)$ is a HUP if and only if
$(\Gamma, \Lambda)$ is a HUP.

\smallskip

Before we state our main result of this section, we need to set up some necessary
notations and the subsequent auxiliary results.
\smallskip

Let $\mu$ be a finite Borel measure which is supported on $\Gamma$ and absolutely
continuous with respect to the arc length measure on $\Gamma.$ Then there exist
functions $f_k\in L^1(\mathbb R);~k=0,1,2,3$ such that
\begin{equation}\label{exp1}
d\mu=f_0(x)dxd\delta_0(y)+f_1(x)dxd\delta_1(y)+f_2(x)dxd\delta_2(y)+f_3(x)dxd\delta_p(y),
\end{equation}
where $\delta_{t}$ denotes the point mass measure at $t.$ By taking the Fourier transform
of both sides of (\ref{exp1}) we get
\begin{equation}\label{exp11}
\hat\mu(\xi,\eta)= \hat{f_0}(\xi) + e^{\pi i\eta}\hat{f_1}(\xi) +
e^{2\pi i\eta}\hat{f_2}(\xi) + e^{p\pi i\eta}\hat{f_3}(\xi).
\end{equation}

Notice that for each fixed $(\xi,\eta)\in\Lambda,$ the right-hand side of Equation $(\ref{exp11})$
is a trigonometric polynomial of degree $p$ that could have preferably some missing terms.
Therefore, it is an interesting question to find out the smallest set $\Lambda$ that determines
the above trigonometric polynomial. We observe that the size of $\Lambda$ depends on the choice
of a number of lines as well as irregular separation among themselves. That is, a larger number of
lines or value of $p$ would force smaller size of $\Lambda.$ Eventually, the problem would
become immensely difficult for a large value of $p.$
\smallskip

Observe that $\hat \mu$ is a $2$-periodic function in the second variable. Hence, for any set
$\Lambda\subset\mathbb R^2,$ it is enough to consider the set
\[\pounds(\Lambda)= \left\{(\xi,\eta) : (\xi,\eta+ 2k)\in\Lambda, \text{ for some } k\in\mathbb Z\right\}\]
for the purpose of HUP. Also, it is easy to verify that $\left(\Gamma,\Lambda\right)$ is a
HUP if and only if $(\Gamma,\overline{\pounds(\Lambda)})$ is a HUP, where $\overline{\pounds(\Lambda)}$
denotes the closure of $\pounds(\Lambda)$ in $\mathbb R^2.$ In view of the above facts, it is enough
to work with the closed set $\Lambda\subset\mathbb R^2$ which is $2$-periodic with respect
to the second variable.
\smallskip

Now, it is evident from the Riemann-Lebesgue lemma that the exponential functions,
which appeared in (\ref{exp11}), cannot be expressed as the Fourier transform
of functions in $L^1(\mathbb R).$ However, they can locally agree with the Fourier
transform of functions in $L^1(\mathbb R).$ Hence, in view of the condition
$\hat\mu\vert_\Lambda=0,$ we can classify these related exponential functions.

\smallskip

Given a set $E\subset\mathbb R$ and a point $\xi\in E,$ let $I_\xi$
denote an interval containing $\xi.$ We define three functions spaces
in the following way.

\smallskip

\noindent $\textbf{(A).}$ $ L^{E,\xi}_{loc}=\{\psi :E\rightarrow\mathbb C$
such that  $\psi(\xi)\neq0$ and there is an interval $I_\xi$ and a function
$\varphi\in L^1(\mathbb R)$ which satisfies $\psi= \hat{\varphi}$ on $I_{\xi}\cap E\}.$
\smallskip

\noindent $\textbf{(B).}$
$P^{1, 2}[ L^{E,\xi}_{loc}]=\{\psi :E\rightarrow\mathbb C$ such that there is
an interval $I_{\xi}$ and $\varphi_j\in L^1(\mathbb R);~j=0,1$ which satisfies
$\psi^2+\hat{\varphi}_1\psi+\hat{\varphi}_0=0$ on $I_{\xi}\cap E\}.$
\smallskip

Now, for $p\in\mathbb N$ with $p\geq3,$ we define the third functions space as
follows.
\smallskip

\noindent $\textbf{(C).}$  $P^{1, p}[ L^{E,\xi}_{loc}]=\{\psi :E\rightarrow\mathbb C$
such that there is an interval $I_{\xi}$ and functions $\varphi_j\in L^1(\mathbb R);~j=0,1,2$
which satisfy $\psi^p+\hat{\varphi}_2\psi^2+\hat{\varphi}_1\psi+\hat{\varphi}_0=0$
on $I_{\xi}\cap E\}.$
\smallskip

We will frequently use the following Wiener's lemma that plays a key role in the
rest part of the arguments for proofs.

\begin{lemma}\label{lemma4}\cite{K}
Let $\psi\in L^{E,\xi}_{loc}$ and $\psi(\xi)\neq0.$ Then $1/{\psi}\in  L^{E,\xi}_{loc}.$
\end{lemma}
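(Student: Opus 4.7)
\emph{Plan.} By hypothesis there is an interval $I_\xi$ containing $\xi$ and $\varphi \in L^1(\mathbb R)$ with $\psi = \hat\varphi$ on $I_\xi \cap E$; set $a := \hat\varphi(\xi) = \psi(\xi) \neq 0$. The plan is to replace $\hat\varphi$ by an auxiliary function $F$ that (i) agrees with $\hat\varphi$ on a neighborhood of $\xi$, (ii) differs from a nonzero constant by an element of the Wiener algebra $A(\mathbb R) := \{\hat u : u \in L^1(\mathbb R)\}$, and (iii) has no zero on $\mathbb R$. The classical Wiener inversion theorem will then invert $F$ in the unitization $\mathbb C\cdot 1 + A(\mathbb R)$, and a second cutoff multiplication will bring the reciprocal back into $A(\mathbb R)$ on a neighborhood of $\xi$.

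\emph{Carrying this out.} By continuity of $\hat\varphi$, I first choose an open interval $J_\xi\subset I_\xi$ about $\xi$ with $|\hat\varphi(x)-a|<|a|/2$ on $J_\xi$, and a Schwartz cutoff $\chi\in\mathcal S(\mathbb R)$ with $0\le\chi\le 1$, supported in $J_\xi$, and identically $1$ on a smaller neighborhood $J'_\xi$ of $\xi$. Since Schwartz functions belong to $A(\mathbb R)$ (their inverse Fourier transforms are Schwartz, hence in $L^1$) and $A(\mathbb R)$ is closed under pointwise multiplication (as $\widehat{u*v}=\hat u\hat v$), the function
\[ F(x) := a + \chi(x)\bigl(\hat\varphi(x)-a\bigr) = a + \bigl(\chi\hat\varphi - a\chi\bigr)(x) \]
has the form $a+g$ with $g\in A(\mathbb R)$, agrees with $\hat\varphi=\psi$ on $J'_\xi\cap E$, and is nowhere zero on $\mathbb R$: on $\operatorname{supp}\chi$ the bound $|g(x)|\le|a|/2$ prevents cancellation with $a$, while off $\operatorname{supp}\chi$ one has $F\equiv a\neq 0$. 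The classical Wiener lemma (Katznelson, \cite{K}) now produces $h\in L^1(\mathbb R)$ with $1/F = 1/a + \hat h$ on all of $\mathbb R$. Picking another Schwartz cutoff $\tilde\chi$ supported in $J'_\xi$ and identically $1$ on a smaller neighborhood $J''_\xi$ of $\xi$, the product $\tilde\chi/F = \tilde\chi/a + \tilde\chi\hat h$ is a sum of two elements of $A(\mathbb R)$, hence equals $\hat{\tilde\varphi}$ for some $\tilde\varphi\in L^1(\mathbb R)$; on $J''_\xi\cap E$ we have $\tilde\chi\equiv 1$ and $F=\psi$, so $\hat{\tilde\varphi}=1/\psi$ there. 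Since $(1/\psi)(\xi)=1/a\neq 0$, this places $1/\psi$ in $L^{E,\xi}_{loc}$.

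\emph{Main obstacle.} The cutoff/unitization step is routine; the substantive ingredient will be the quoted global Wiener inversion theorem asserting that any $a+g$ with $a\in\mathbb C^\times$, $g\in A(\mathbb R)$, and $a+g$ nowhere zero on $\mathbb R$ is invertible in the unitized Wiener algebra. This is equivalent to identifying the Gelfand spectrum of $\mathbb C\cdot 1 + A(\mathbb R)$ with the one-point compactification of $\mathbb R$, and is the nontrivial harmonic-analytic input that \cite{K} supplies.
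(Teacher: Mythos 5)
Your argument is correct: the cutoff-and-unitization reduction to the global Wiener $1/f$ theorem, followed by a second cutoff to land back in $A(\mathbb R)$ near $\xi$, is exactly the standard proof of this localized Wiener lemma, and every step (the lower bound $|F|\ge |a|/2$ on $\operatorname{supp}\chi$, $F\equiv a$ elsewhere, and the agreement $F=\psi$ on $J'_\xi\cap E$) checks out. Note that the paper itself supplies no proof at all --- it states the lemma as a quoted result and refers to Kahane's book, p.~57 --- so your write-up is essentially a reconstruction of the argument behind that citation rather than an alternative to anything in the paper.
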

For more details, see \cite{K}, p.57.
\smallskip

In view of Lemma \ref{lemma4}, we derive the following relation among the sets which are
described by $\textbf{(A)},~\textbf{(B)}$ and $\textbf{(C)}.$ We would like to mention that
the integral choice of $p$ in Lemma \ref{lemma5} has been considered for a convenience.
\begin{lemma}\label{lemma5}
 For $p\geq3,$ the following inclusions hold.
\begin{equation}\label{exp3}
L^{E,\xi}_{loc}\subset P^{1,2}[ L^{E,\xi}_{loc}]\subset P^{1, p}[ L^{E,\xi}_{loc}].
\end{equation}
\end{lemma}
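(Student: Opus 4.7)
The plan is to establish the two inclusions separately, and both turn out to be algebraic manipulations in the local function algebra on $I_\xi\cap E$, using only the multiplicative relation $\widehat{u}\cdot\widehat{v}=\widehat{u*v}$ and the closure of $L^{1}(\mathbb R)$ under convolution and finite sums.

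For the first inclusion $L^{E,\xi}_{loc}\subset P^{1,2}[L^{E,\xi}_{loc}]$, I would simply take $\psi=\widehat{\varphi}$ on $I_\xi\cap E$ and choose $\varphi_{1}=-\varphi\in L^{1}(\mathbb R)$ and $\varphi_{0}=0$. Then on $I_\xi\cap E$ one has $\psi^{2}+\widehat{\varphi}_{1}\psi+\widehat{\varphi}_{0}=\psi^{2}-\psi\cdot\psi=0$, so $\psi$ lies in $P^{1,2}[L^{E,\xi}_{loc}]$.

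For the second inclusion $P^{1,2}[L^{E,\xi}_{loc}]\subset P^{1,p}[L^{E,\xi}_{loc}]$, the strategy is to iterate the quadratic relation $\psi^{2}=-\widehat{\varphi}_{1}\psi-\widehat{\varphi}_{0}$ (valid on $I_\xi\cap E$) in order to express every higher power $\psi^{n}$, $n\ge 2$, in the linear form
\[
\psi^{n}=\widehat{g}_{n}\,\psi+\widehat{h}_{n},\qquad g_{n},h_{n}\in L^{1}(\mathbb R).
\]
Induction on $n$ does the job: start with $g_{2}=-\varphi_{1}$, $h_{2}=-\varphi_{0}$, and, multiplying by $\psi$ and substituting the quadratic relation once, derive the recursion
\[
g_{n+1}=h_{n}-g_{n}*\varphi_{1},\qquad h_{n+1}=-\,g_{n}*\varphi_{0}.
\]
Because $L^{1}(\mathbb R)$ is closed under convolution and finite linear combinations, both $g_{n}$ and $h_{n}$ remain in $L^{1}(\mathbb R)$ at every stage. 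Specializing to $n=p\ge 3$ gives $\psi^{p}-\widehat{g}_{p}\,\psi-\widehat{h}_{p}=0$ on $I_\xi\cap E$, and then taking $\varphi_{2}=0$, together with $\varphi_{1}':=-g_{p}$ and $\varphi_{0}':=-h_{p}$ (all in $L^{1}(\mathbb R)$), puts this identity into the exact form $\psi^{p}+\widehat{\varphi}_{2}\psi^{2}+\widehat{\varphi}_{1}'\psi+\widehat{\varphi}_{0}'=0$ required by $\textbf{(C)}$.

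No essential obstacle is expected: neither Wiener's lemma nor any analytic input beyond the convolution theorem is needed, and the common interval $I_\xi$ does not need to be shrunk during the induction, since each successive identity $\psi^{n}=\widehat{g}_{n}\psi+\widehat{h}_{n}$ is derived directly from the original quadratic relation on the same $I_\xi\cap E$. The only mild bookkeeping point is to verify the recursion once carefully, after which the two inclusions fall out immediately.
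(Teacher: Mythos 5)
Your proof is correct, and both inclusions are established rigorously; but your route is genuinely different from (and more elementary than) the paper's. For the first inclusion the paper does not use the trivial relation $\psi^{2}-\hat{\varphi}\,\psi=0$: it invokes Wiener's lemma twice to write $1/\psi$ and then $1/\hat{g}$ as local Fourier transforms, deduces that $\psi^{2}$ (and inductively every power $\psi^{p}$) itself lies in $L^{E,\xi}_{loc}$, and only then assembles a quadratic relation using an auxiliary $f_{0}\in L^{1}(\mathbb R)$ with $I_{1}\subset\operatorname{supp}\hat{f}_{0}$, shrinking the interval several times along the way. Your choice $\varphi_{1}=-\varphi$, $\varphi_{0}=0$ short-circuits all of this and is perfectly legitimate, since the definitions of $P^{1,2}$ and $P^{1,p}$ only ask for \emph{some} $\varphi_{j}\in L^{1}(\mathbb R)$ (the zero function qualifies) and impose no nondegeneracy on the coefficients; the downstream uses of the lemma in the proof of Theorem 3.8 only need the set inclusions, so nothing breaks. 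For the second inclusion the paper multiplies the quadratic relation by $\psi$ and by $\hat{f}_{0}$ and adds, obtaining a cubic relation, and then appeals to induction somewhat tersely; your device of keeping every power in the reduced linear form $\psi^{n}=\hat{g}_{n}\psi+\hat{h}_{n}$ with the explicit recursion $g_{n+1}=h_{n}-g_{n}\ast\varphi_{1}$, $h_{n+1}=-g_{n}\ast\varphi_{0}$ makes that induction completely transparent, and your observation that the interval never needs to shrink (because each new identity is a pointwise algebraic consequence of the original relation on $I_{\xi}\cap E$ together with the global identity $\hat{u}\hat{v}=\widehat{u\ast v}$) is accurate. What the paper's longer argument buys is the auxiliary fact that all powers of $\psi$ are themselves locally Fourier transforms, which is in the spirit of its dispensable-set machinery but is not needed for the stated inclusions; what your argument buys is brevity and the elimination of Wiener's lemma from this step entirely.
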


\begin{proof}
$\bf{(a)} $ If $\psi\in L^{E,\xi}_{loc},$ then by the Wiener's lemma $1/{\psi}\in  L^{E,\xi}_{loc}.$
By definition, there exist intervals $I_1, I_2$ containing $\xi$ and functions $f,g\in L^1(\mathbb R)$
such that $\psi=\hat{f}$ on $I_1\cap E$ and $\frac{1}{\psi}=\hat{g}$  on $I_2\cap E.$
Hence we can extract an interval $I_3\subset I_1\cap I_2$ containing $\xi$ such that
$\psi^2=\frac{\hat f}{\hat g}$ on $I_3\cap E.$ As $\hat g(\xi)\neq 0,$ there exists
an interval $I_4$ containing $\xi$ and a function $h\in L^1(\mathbb R)$ such that
$\frac{1}{\hat g}= \hat h$ on $ I_4\cap E.$ Further, we can extract an interval
$I_5\subset I_3\cap I_4$ containing $\xi$ such that
\begin{equation}\label{exp12}
\psi^2=\hat f~\hat h=\widehat{f\ast h}=\hat\varphi
\end{equation}
on $I_5\cap E,$ where $\varphi=f\ast h\in L^1(\mathbb R).$ This implies $\psi^2\in L^{E,\xi}_{loc}.$
Hence by the induction argument, it can be shown that $\psi^p\in L^{E,\xi}_{loc},$ whenever $p\in\mathbb N.$
Now, consider a function $f_0\in L^1(\mathbb R)$ such that $I_1\subset\text{supp }\hat{f}_0.$
Since $\psi=\hat{f}$ on $I_1\cap E,$ it follows that
\begin{equation}\label{exp13}
\hat{f}_0\psi= \hat{f}_0\hat{f}=\widehat{f_0\ast f}
\end{equation}
on $I_1\cap E.$ Hence from (\ref{exp12}) and (\ref{exp13}) we conclude that
\begin{equation}\label{exp14}
 \psi^2+\hat{\varphi}_1\psi+\hat{\varphi}_0=0
\end{equation}
on $I_\xi\cap E,$ where $I_\xi\subset I_1\cap I_5,~\varphi_0=-(f_0\ast f+\varphi)$ and
$\varphi_1=f_0.$ Thus, $\psi\in P^{1, 2}[ L^{E,\xi}_{loc}].$ By applying induction, we can
show that $\psi^p+\hat{\varphi}_1\psi+\hat{\varphi}_0=0,$ whenever $p\in\mathbb N.$

\smallskip

\noindent $\bf{(b)} $ If $\psi\in P^{1, 2}[ L^{E,\xi}_{loc}],$ then there exists an interval
$I_\xi$ containing $\xi$ and functions $f,g\in L^1(\mathbb R)$ such that
\begin{equation}\label{exp17}
\psi^2+\hat{f} \psi+\hat g=0
\end{equation}
on $I_\xi\cap E.$
Now, consider a function $f_0\in L^1(\mathbb R)$ such that $I_\xi\subset\text{supp }\hat{f}_0.$
After multiplying (\ref{exp17}) by $\psi$ and $\hat{f}_0$ separately
and adding the resultant equations, we can write
\[\psi^3+\left(\hat{f}_0+ \hat f\right)\psi^2+\left(\hat{f}_0\hat f+
\hat g\right)\psi+\hat{f}_0\hat g=0.\]
Hence for the appropriate choice of $\varphi_j;~j=0,1,2,$ we have
\begin{equation}\label{exp141}
\psi^3+\hat{\varphi}_2\psi^2+\hat{\varphi}_1\psi+\hat{\varphi}_0=0
\end{equation}
on $I_\xi\cap E.$ Further by induction, it follows that
$\psi^p+\hat{\varphi}_2\psi^2+\hat{\varphi}_1\psi+\hat{\varphi}_0=0$
on $I_\xi\cap E,$ whenever $p\in\mathbb N.$ Thus, $\psi\in P^{1, p}[ L^{E,\xi}_{loc}].$
\end{proof}

Let $\Pi(\Lambda)$ be the projection of $\Lambda$ on $\mathbb R\times\{0\}.$ For
$\xi\in\Pi(\Lambda),$ we denote the corresponding image on the $\eta $ - axis by
\[\varSigma_{\xi}= \{\eta\in [0,2) : (\xi,\eta)\in\Lambda\}.\]

Now, we require analyzing the set $\Pi(\Lambda)$ to know its basic geometrical
structure in accordance with the Heisenberg uniqueness pair. Since it is expected
that the set $\varSigma_{\xi}$ may consist one or more image points depending upon
the order of its winding, the set $\Pi(\Lambda)$ can be decomposed into the
following four disjoint sets. For the sake of convenience, we denote $F_o=\{0,1,2,3\}.$

\smallskip

\noindent ${\bf{(P_1).}}$ $\Pi^1(\Lambda)=\{\xi\in\Pi(\Lambda):\text{ there is a unique }\eta_0\in\varSigma_{\xi}\}.$
\smallskip

\noindent ${\bf{(P_2).}}$ $\Pi^2(\Lambda)=\{\xi\in\Pi(\Lambda):\text{ there are only two distinct }\eta_j\in \varSigma_{\xi};~j=0,1\}.$
\smallskip

In order to describe the rest of the two partitioning sets, we will use the
notion of symmetric polynomial. For each $k\in\mathbb Z_+,$ the complete homogeneous
symmetric polynomial $H_k$ of degree $k$ is the sum of all monomials of degree
$k.$ That is,
\[H_k\left(x_1,\ldots,x_n\right)=\sum\limits_{l_1+\cdots+l_n=k;\;l_i\geq 0}x_1^{l_1}\ldots x_n^{l_n}.\]
For more details, we refer to \cite{MIG}.
\smallskip

Consider four distinct image points $\eta_j\in[0,2)$ and denote $a_j=e^{\pi i\eta_j};~j\in F_0.$
For $p\geq3,$ we define the remaining two sets as follows:
\smallskip

\noindent ${\bf{(P_3).}}$ $\Pi^3(\Lambda)=\{\xi\in\Pi(\Lambda):$ there are at least
three distinct $\eta_j\in\varSigma_{\xi}$ for $j=0,1,2$ and if there is another
$\eta_3\in \varSigma_{\xi},$ then $H_{p-2}(a_0,a_{1},a_2)=H_{p-2}(a_0,a_{1},a_{3})\}.$
\smallskip

\noindent ${\bf{(P_4).}}$ $\Pi^4(\Lambda)=\{\xi\in\Pi(\Lambda):$ there are at least four distinct
 $\eta_j\in\varSigma_{\xi};~j\in F_o$ which satisfy
$H_{p-2}(a_0,a_{1},a_2)\neq H_{p-2}(a_0,a_{1},a_{3})\}.$

\smallskip

In this way, we get the desired decomposition as $\Pi(\Lambda)=\bigcup\limits_{j=1}^4\Pi^j(\Lambda).$

\smallskip

Now, for three distinct image points $\eta_j\in [0,2);~j=0,1,2,$ denote
$a=e^{\pi i\eta_0},$ $b=e^{\pi i\eta_1}$ and $c=e^{\pi i\eta_2}.$ Consider
the system of equations $A_\xi^3X=B_\xi^p,$ where
\begin{equation}\label{exp21}
A_\xi^3= \left( \begin{array}{ccc}
1 & a & a^2 \\
1 & b & b^2 \\
1 & c & c^2 \end{array} \right),
\end{equation}
$X_\xi=(\tau_0,\tau_1,\tau_2)$ and $B_\xi^p=-\left(a^p,b^p,c^p\right).$ Since
$\det A_\xi^3=(a-b)(b-c)(c-a)\neq0,$ $A_\xi^3X=B_\xi^p$ has a unique solution.
A simple calculation gives

\begin{eqnarray}\label{exp19}
 \tau_o&=&-abcH_{p-3}(a,b,c),\\
\tau_1&=&H_{p-1}(a,b,c)-\left(a^{p-1}+b^{p-1}+c^{p-1}\right)+\sum\limits_{l+m+n=p-1\atop {l,m,n\geq1}}a^lb^mc^n,\nonumber\\
\tau_2&=&-H_{p-2}(a,b,c).\nonumber
\end{eqnarray}

Since measure in the question is supported on a certain system of four parallel lines
and the exponential functions which have appeared in (\ref{exp11}) can locally agree
with the Fourier transform of some functions in $L^1(\mathbb R),$ the following sets
sitting in $\Pi(\Lambda)$ seems to be dispensable in the process of getting the
Heisenberg uniqueness pairs.
\smallskip

\noindent $\bf{(P_{1^\ast})}.$
As each $\xi\in\Pi^1(\Lambda)$ has a unique image in $\varSigma_{\xi},$ we can define
a function $\chi_0$ on $\Pi^1(\Lambda)$ by $\chi_0(\xi)= e^{\pi i\eta_0},$ where
$\eta_0=\eta_0(\xi)\in \varSigma_{\xi}.$ Now, the first dispensable set can be defined by
\[\Pi^{1^\ast}(\Lambda)=\left\{\xi\in\Pi^1(\Lambda):\chi_0\in P^{1, p}[L^{\Pi^1(\Lambda),\xi}_{loc}]\right\}.\]

Next, for $\xi\in\Pi^2(\Lambda),$ let $\chi_j(\xi)= e^{\pi i\eta_j},$ where $\eta_j=\eta_j(\xi)\in \varSigma_{\xi};~j=0,1.$

\smallskip

\noindent $\bf{(P_{2^\ast}').}$
Since each $\xi\in\Pi^2(\Lambda)$ has two distinct image points in $\varSigma_{\xi},$
we define two functions $\delta_j$ on $\Pi^2(\Lambda);j=0,1$ such that  $X_\xi=\left(\delta_0(\xi),\delta_1(\xi)\right)$
is the solution of $A^2_\xi X_\xi=B^2_\xi,$ where
{\[A^2_\xi= \left( \begin{array}{ccc}
1 & {\chi_0(\xi)} \\
1 & {\chi_1(\xi)}
\end{array} \right)\]}
and $B^2_\xi=-\left({\chi_0(\xi)}^2,{\chi_1(\xi)}^2\right).$ In this way, an auxiliary dispensable
set can be defined by
\[\Pi^{2^\ast}(\Lambda)=\left\{\xi\in\Pi^2(\Lambda):\delta_j\in L^{\Pi^2(\Lambda),\xi}_{loc}; ~j=0,1\right\}.\]

\noindent $\bf{(P_{2^\ast}'').}$ Further, we define three functions $\rho_j$ on $\Pi^2(\Lambda);~j=0,1,2$
such that  $X_\xi=\left(\rho_0(\xi),\rho_1(\xi),\rho_2(\xi)\right)$ becomes a solution of  $A^p_\xi X_\xi=B^p_\xi,$
where
{\[A^p_\xi= \left( \begin{array}{ccc}
1 & {\chi_0(\xi)} & {\chi_0(\xi)}^2\\
1 & {\chi_1(\xi)} & {\chi_1(\xi)}^2
\end{array} \right)\]}
and $B^p_\xi=-\left({\chi_0(\xi)}^p,{\chi_1(\xi)}^p\right).$ Hence the second dispensable set can be
defined by
\[\Pi^p_{2^\ast}(\Lambda)=\left\{\xi\in\Pi^2(\Lambda):\rho_j\in L^{\Pi^2(\Lambda),\xi}_{loc}; ~j=0,1,2\right\}.\]

For $\xi\in\Pi^3(\Lambda),$ let $\chi_j(\xi)= e^{\pi i\eta_j},$ where $\eta_j=\eta_j(\xi)\in \varSigma_{\xi};~j=0,1,2.$

\smallskip

\noindent $\bf{(P_{3^\ast}').}$ For each $\xi\in\Pi^3(\Lambda)$ has three distinct image
points in $\varSigma_{\xi},$ we define three functions $e_j$ on $\Pi^3(\Lambda);~j=0,1,2$
such that $X_\xi=\left(e_0(\xi),e_1(\xi),e_2(\xi)\right)$ is the solution of
$A^3_\xi X_\xi=B^3_\xi,$ where $A^3_\xi$ is the matrix given by Equation (\ref{exp21})
and $B^3_\xi=-\left({\chi_0(\xi)}^3,{\chi_1(\xi)}^3,{\chi_2(\xi)}^3\right).$
Hence another auxiliary dispensable set can be defined by
\[\Pi^{3^\ast}(\Lambda)=\left\{\xi\in\Pi^3(\Lambda):e_j\in L^{\Pi^3(\Lambda),\xi}_{loc}; ~j=0,1,2\right\}.\]
\smallskip

\noindent $\bf{(P_{3^\ast}'').}$ Once again we define three functions $\tau_j$ on
$\Pi^3(\Lambda);~j=0,1,2$ such that $X_\xi=\left(\tau_0(\xi),\tau_1(\xi),\tau_2(\xi)\right)$
is the solution of $A^3_\xi X_\xi=B^p_\xi,$ where
$B^p_\xi=-\left({\chi_0(\xi)}^p,{\chi_1(\xi)}^p,{\chi_2(\xi)}^p\right).$
Hence the third dispensable set can be defined by
\[\Pi^p_{3^\ast}(\Lambda)=\left\{\xi\in\Pi^3(\Lambda):\tau_j\in L^{\Pi^3(\Lambda),\xi}_{loc};~j=0,1,2\right\}.\]

Now, we prove the following two lemmas that speak about a sharp contrast in the pattern
of dispensable sets as compared to dispensable sets which appeared in two lines and three
lines results. That is, a larger value of $p$ will increase the size of dispensable sets in
case of four lines problem. Further, we observe that dispensable sets are eventually those
sets contained in $\Pi(\Lambda)$ where we could not solve Equation (\ref{exp11}).
For more details, we refer to \cite{Ba,HR}.

\begin{lemma}\label{lemma99}
For $p\geq3,$ the following inclusion holds.
\[\Pi^{2^\ast}(\Lambda)\subset\Pi^p_{2^\ast}(\Lambda).\]
\end{lemma}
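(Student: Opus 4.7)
The plan is to exploit the fact that the system $A_\xi^p X_\xi=B_\xi^p$ that defines $\Pi^p_{2^\ast}$ is underdetermined (two equations, three unknowns), and to promote the quadratic local identity $\chi_j^2+\delta_1\chi_j+\delta_0=0$ (for $j=0,1$) available at any $\xi\in\Pi^{2^\ast}(\Lambda)$ into a degree-$p$ identity of the shape required by the definition of $\Pi^p_{2^\ast}(\Lambda)$. The construction will be essentially explicit: use the quadratic relation to reduce powers of $\chi_j$ modulo $\psi^2+\delta_1\psi+\delta_0$, and then choose one free parameter to match the nonvanishing conditions at $\xi$.

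First I would carry out the algebraic reduction. Since $\chi_j^2=-\delta_1\chi_j-\delta_0$ on some interval $I_\xi\cap\Pi^2(\Lambda)$ for $j=0,1$, a straightforward induction on $k$ produces polynomials $\alpha_k,\beta_k\in\mathbb{Z}[\delta_0,\delta_1]$ with $\chi_j^k=\alpha_k\chi_j+\beta_k$ on $I_\xi\cap\Pi^2(\Lambda)$; in particular $\chi_j^p=\alpha_p\chi_j+\beta_p$ for $j=0,1$. Substituting into $\chi_j^p+\rho_2\chi_j^2+\rho_1\chi_j+\rho_0=0$ and again using $\chi_j^2=-\delta_1\chi_j-\delta_0$, the required identity collapses to
\[
\bigl(\alpha_p-\rho_2\delta_1+\rho_1\bigr)\chi_j+\bigl(\beta_p-\rho_2\delta_0+\rho_0\bigr)=0,\qquad j=0,1.
\]
Because $\chi_0(\xi)\neq\chi_1(\xi)$ (by $\xi\in\Pi^2(\Lambda)$) and both are continuous, after possibly shrinking $I_\xi$ the two $\chi_j$'s are locally linearly independent, so both the coefficient of $\chi_j$ and the constant term must vanish. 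This produces the one-parameter family of solutions
\[
\rho_1=\rho_2\delta_1-\alpha_p,\qquad \rho_0=\rho_2\delta_0-\beta_p,
\]
indexed by $\rho_2$.

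Next I would verify that a suitable choice of $\rho_2$ places $(\rho_0,\rho_1,\rho_2)$ in $L^{\Pi^2(\Lambda),\xi}_{loc}$. The class of functions on $\Pi^2(\Lambda)$ that agree locally near $\xi$ with the restriction of some $\hat\varphi$, $\varphi\in L^1(\mathbb{R})$, is closed under sums and products via $\hat f\hat g=\widehat{f\ast g}$, and contains the constants (take $\varphi$ to be a bump function with $\hat\varphi\equiv c$ on a small interval around $\xi$). Since $\delta_0,\delta_1$ belong to this class, so do $\alpha_p$ and $\beta_p$, and therefore so do $\rho_1$ and $\rho_0$ once $\rho_2$ is picked to be a constant.

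The only delicate point, and the step I expect to be the main obstacle, is honoring the nonvanishing clause $\rho_j(\xi)\neq 0$ built into the definition of $L^{\Pi^2(\Lambda),\xi}_{loc}$. Using that $\delta_0(\xi),\delta_1(\xi)\neq 0$, the forbidden constants for $\rho_2(\xi)$ are only the three values $0$, $\alpha_p(\xi)/\delta_1(\xi)$ and $\beta_p(\xi)/\delta_0(\xi)$, so any constant $\rho_2$ outside this finite set works. With such a choice, $(\rho_0,\rho_1,\rho_2)$ solves $A_\xi^p X_\xi=B_\xi^p$ and each $\rho_j\in L^{\Pi^2(\Lambda),\xi}_{loc}$, giving $\xi\in\Pi^p_{2^\ast}(\Lambda)$ and establishing the claimed inclusion $\Pi^{2^\ast}(\Lambda)\subset\Pi^p_{2^\ast}(\Lambda)$.
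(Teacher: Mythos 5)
Your proof is correct, and at bottom it runs on the same engine as the paper's: the class of functions that locally agree with Fourier transforms of $L^1$ functions is closed under sums and products (via $\hat f\hat g=\widehat{f\ast g}$), so the quadratic relation $\chi_j^2+\delta_1\chi_j+\delta_0=0$ can be parlayed into a monic degree-$p$ relation with coefficients in that class. The paper organizes this as an upward iteration borrowed from Lemma \ref{lemma5}$(b)$ --- multiply the relation by $\chi_j$ and by an auxiliary $\hat f_0$ with $\hat f_0\neq 0$ on the interval, add, and repeat to raise the degree one step at a time --- whereas you reduce $\chi_j^p$ downward modulo the quadratic to $\alpha_p\chi_j+\beta_p$ and then exploit the underdeterminedness of $A^p_\xi X_\xi=B^p_\xi$ through the free parameter $\rho_2$. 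The one place where your write-up genuinely adds something is the last step: the definition of $L^{\Pi^2(\Lambda),\xi}_{loc}$ carries the normalization $\psi(\xi)\neq 0$, and your choice of a constant $\rho_2$ avoiding the three values $0$, $\alpha_p(\xi)/\delta_1(\xi)$, $\beta_p(\xi)/\delta_0(\xi)$ (legitimate because $\delta_0(\xi),\delta_1(\xi)\neq0$ is built into $\xi\in\Pi^{2^\ast}(\Lambda)$) secures this for all three $\rho_j$; the paper's proof does not address this point and simply asserts that $\xi_o\in\Pi^p_{2^\ast}(\Lambda)$ is ``easy to see.'' Your explicit one-parameter family of solutions also makes transparent why the inclusion is generally proper, which the paper only remarks on later.
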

\begin{proof}
If $\xi_o\in\Pi^{2^\ast}(\Lambda),$ then $\delta_j\in L^{\Pi^2(\Lambda),\xi_o}_{loc}.$
Hence there exists an interval $I_{\xi_o}$ containing $\xi_o$ and  $\varphi_j\in L^1(\mathbb{R})$
such that $\delta_j=\hat{\varphi}_j;~j=0,1$ satisfy \[\hat\varphi_0+\hat\varphi_1\chi_j+{\chi_j}^2=0\]
on $I_{\xi_o}\cap\Pi^2(\Lambda),$ whenever $j=0,1.$ Now, by the similar iteration as in the proof
of Lemma \ref{lemma5}$(b),$ we infer that there exist a common set of $\psi_j\in L^1(\mathbb{R});~j=0,1,2$
such that
\[\hat{\psi}_0+\hat{\psi}_1\chi_j+\hat{\psi}_2{\chi_j}^2+{\chi_j}^p=0\]
on $I_{\xi_o}\cap\Pi^2(\Lambda),$ whenever $j=0,1.$ If we denote $\hat{\psi}_j=\rho_j,$
then it is easy to see that $\xi_o\in\Pi^p_{2^\ast}(\Lambda).$
\end{proof}

\begin{lemma}\label{lemma100}
For $p\geq3,$ the following inclusion holds.
\[\Pi^{3^\ast}(\Lambda)\subseteq\Pi^p_{3^\ast}(\Lambda).\] Moreover, equality
holds for $p=3.$
\end{lemma}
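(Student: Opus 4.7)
The plan is to adapt the iteration technique from the proof of Lemma \ref{lemma5}$(b)$ to the present cubic setting. Starting from $\xi_o\in\Pi^{3^\ast}(\Lambda)$, I would unpack the definition to obtain an interval $I_{\xi_o}$ containing $\xi_o$ and functions $\varphi_0,\varphi_1,\varphi_2\in L^1(\mathbb R)$ with $e_j=\hat{\varphi}_j$ on $I_{\xi_o}\cap\Pi^3(\Lambda)$. Reading off the system $A^3_\xi X_\xi=B^3_\xi$, each row gives
\[
\chi_j^3+e_2\chi_j^2+e_1\chi_j+e_0=0,\qquad j=0,1,2,
\]
so all three exponentials $\chi_0,\chi_1,\chi_2$ are roots of a common cubic whose coefficients lie locally in the Fourier-transform class.

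The key observation is that once one cubic annihilates the three $\chi_j$, any higher power $\chi_j^p$ can be reduced modulo it to a quadratic polynomial in $\chi_j$ whose coefficients are \emph{universal} (integer-coefficient) polynomials in $e_0,e_1,e_2$. I would carry this out by induction on $p$. For the base case $p=4$, multiplying the cubic relation by $\chi_j$ and substituting $\chi_j^3=-e_2\chi_j^2-e_1\chi_j-e_0$ yields
\[
\chi_j^4+(e_1-e_2^2)\chi_j^2+(e_0-e_1e_2)\chi_j-e_0e_2=0.
\]
For the inductive step, assuming $\chi_j^p+\tau_2^{(p)}\chi_j^2+\tau_1^{(p)}\chi_j+\tau_0^{(p)}=0$, multiply by $\chi_j$ and eliminate the resulting $\chi_j^3$ term using the original cubic to obtain a relation of the same shape for $\chi_j^{p+1}$, whose coefficients are polynomials in $\tau_0^{(p)},\tau_1^{(p)},\tau_2^{(p)}$ and $e_0,e_1,e_2$. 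After finitely many steps I read off the desired $\tau_0,\tau_1,\tau_2$.

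It remains to check that these coefficients lie in $L^{\Pi^3(\Lambda),\xi_o}_{loc}$. Here I would invoke exactly the argument used in the derivation of \eqref{exp12}: a product $\hat{f}\,\hat{g}=\widehat{f\ast g}$ stays in the local Fourier-transform class on the intersection of the two defining intervals, and sums are immediate. Since only finitely many products and sums are involved, intersecting all intervals in sight produces a single neighborhood $I'_{\xi_o}\subset I_{\xi_o}$ of $\xi_o$ on which all the $\tau_k$ equal $\hat{\psi}_k$ for some $\psi_k\in L^1(\mathbb R)$; by construction the $\tau_k$ solve $A^3_\xi X_\xi=B^p_\xi$, so $\xi_o\in\Pi^p_{3^\ast}(\Lambda)$. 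The equality statement for $p=3$ is immediate, since then $\Pi^{3^\ast}(\Lambda)$ and $\Pi^p_{3^\ast}(\Lambda)$ are defined via the identical system $A^3_\xi X_\xi=B^3_\xi$, whose unique solution gives $e_j=\tau_j$. The main (minor) obstacle I anticipate is purely bookkeeping: tracking how the interval $I_{\xi_o}$ shrinks under iterated products and ensuring the (implicit) nonvanishing hypothesis in the definition of $L^{\Pi^3(\Lambda),\xi_o}_{loc}$ survives each reduction, both handled by the same convolution/Wiener maneuver already used in Lemma \ref{lemma5}.
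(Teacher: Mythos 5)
Your proposal is correct and follows essentially the same route as the paper: the paper's proof simply invokes ``the similar iteration as in the proof of Lemma \ref{lemma5}$(b)$'' to pass from the cubic relation satisfied by the $\chi_j$ to a monic degree-$p$ relation with the same lower-order shape, which is exactly the reduction-modulo-the-cubic induction you spell out, together with the same convolution argument for keeping the coefficients in $L^{\Pi^3(\Lambda),\xi_o}_{loc}$. Your explicit base case $\chi_j^4+(e_1-e_2^2)\chi_j^2+(e_0-e_1e_2)\chi_j-e_0e_2=0$ and your observation that equality for $p=3$ is definitional both check out.
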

\begin{proof}
If $\xi_o\in\Pi^{3^\ast}(\Lambda),$ then $e_j\in L^{\Pi^3(\Lambda),\xi_o}_{loc}.$
Hence there exists an interval $I_{\xi_o}$ containing $\xi_o$ and  $\varphi_j\in L^1(\mathbb{R})$
such that $e_j=\hat{\varphi}_j;~j=0,1,2$ satisfy
\[\hat{\varphi}_0+\hat{\varphi}_1\chi_j+\hat{\varphi}_2{\chi_j}^2+{\chi_j}^3=0\]
on $I_{\xi_o}\cap\Pi^3(\Lambda),$ whenever $j=0,1,2.$ By the similar iteration as in the proof
of Lemma \ref{lemma5}$(b),$ it follows that there exist $\psi_j\in L^1(\mathbb{R});~j=0,1,2$
such that \[\hat{\psi}_0+\hat{\psi}_1\chi_j+\hat{\psi}_2{\chi_j}^2+{\chi_j}^p=0\]
on $I_{\xi_o}\cap\Pi^3(\Lambda),$ whenever $j=0,1,2.$ If we denote $\hat{\psi}_j=\tau_j,$
then it is easy to verify that $\xi_o\in\Pi^p_{3^\ast}(\Lambda).$
\end{proof}

On the basis of structural properties of the dispensable sets, we observe that
these sets are essentially minimizing the size of projection $\Pi(\Lambda).$
Now, we can state our main result of this section about the Heisenberg uniqueness
pairs corresponding to the above described system of four parallel straight lines.

\begin{theorem}\label{th17}
Let $\Gamma=\mathbb R\times\{0,1,2,p\},$ where $p\in\mathbb N$ and $p\geq3.$
Let $\Lambda\subset\mathbb R^2$ be a closed set which is $2$-periodic with
respect to the second variable. Suppose $\Pi(\Lambda)$ is dense in $\mathbb R.$
If $\left(\Gamma,\Lambda\right)$ is a Heisenberg uniqueness pair, then the set
\[\widetilde{\Pi}(\Lambda)= {\Pi^4(\Lambda)}\bigcup\limits_{j=0}^2\left[{{\Pi^{(3-j)}(\Lambda)}\smallsetminus{\Pi^{{(3-j)}^\ast}(\Lambda)}}\right] \]
is dense in $\mathbb R.$ Conversely, if the set
\[\widetilde{\Pi_p}(\Lambda)= {\Pi^4(\Lambda)}\cup\left[{{\Pi^{3}(\Lambda)}\smallsetminus{\Pi^p_{{3}^\ast}(\Lambda)}}\right]\cup\left[{{\Pi^{2}(\Lambda)}\smallsetminus{\Pi^p_{{2}^\ast}(\Lambda)}}\right] \cup\left[{{\Pi^{1}(\Lambda)}\smallsetminus{\Pi^{{1}^\ast}(\Lambda)}}\right]\]
is dense in $\mathbb R,$ then $\left(\Gamma,\Lambda\right)$ is a Heisenberg uniqueness pair.
\end{theorem}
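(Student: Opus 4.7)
The plan is as follows. Writing
\[
d\mu = \sum_{j=0}^{3} f_j(x)\,dx\,d\delta_{h_j}(y)
\]
with heights $(h_0,h_1,h_2,h_3)=(0,1,2,p)$ and $f_j\in L^1(\mathbb R)$, the hypothesis $\hat\mu|_\Lambda=0$ becomes the polynomial identity
\[
\hat f_0(\xi)+z\,\hat f_1(\xi)+z^2\,\hat f_2(\xi)+z^p\,\hat f_3(\xi)=0 \quad\text{for }(\xi,\eta)\in\Lambda,\ z=e^{\pi i\eta}.
\]
Since each $\hat f_j$ is continuous, it would suffice to show $\hat f_j(\xi_0)=0$ for every $\xi_0$ in a dense subset of $\mathbb R$. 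I would handle sufficiency by a two-round cascade --- first $\hat f_3\equiv 0$, then the remaining coefficients --- and handle necessity by contrapositive, constructing a nontrivial annihilating measure.

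For sufficiency, assume $\widetilde{\Pi_p}(\Lambda)$ is dense and fix $\xi_0$ in it. Round one establishes $\hat f_3(\xi_0)=0$. If $\xi_0\in\Pi^4(\Lambda)$, the four equations in $(\hat f_0,\hat f_1,\hat f_2,\hat f_3)(\xi_0)$ form a generalized Vandermonde system whose determinant, by the Jacobi/Schur bialternant identity, is a nonzero scalar multiple of $H_{p-3}(a_0,a_1,a_2,a_3)\prod_{i<j}(a_j-a_i)$; coupled with the easy identity $H_{p-2}(a_0,a_1,a_2)-H_{p-2}(a_0,a_1,a_3)=(a_2-a_3)H_{p-3}(a_0,a_1,a_2,a_3)$, the defining inequality for $\Pi^4$ becomes exactly nonvanishing of this determinant, so all $\hat f_j(\xi_0)=0$. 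Otherwise $\xi_0\in\Pi^k(\Lambda)\setminus\Pi^p_{k^\ast}(\Lambda)$ for some $k\in\{1,2,3\}$ (with $\Pi^{1^\ast}$ in place of $\Pi^p_{1^\ast}$), and I would argue by contradiction. Assuming $\hat f_3(\xi_0)\neq 0$, continuity together with Wiener's Lemma \ref{lemma4} would give an interval $I\ni\xi_0$ on which $1/\hat f_3$ is a local Fourier transform, so the ratios $\hat f_j/\hat f_3$ would be local Fourier transforms solving respectively $A^3_\xi X=B^p_\xi$, $A^p_\xi X=B^p_\xi$, or (for $k=1$) the monic polynomial equation satisfied by $\chi_0$ with local-FT coefficients. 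For $k=1$, since $P^{1,p}[L^{\Pi^1(\Lambda),\xi_0}_{loc}]$ carries no nonvanishing hypothesis on its polynomial coefficients, this immediately places $\xi_0$ in $\Pi^{1^\ast}$; for $k=2,3$ a perturbation within the kernel of the system, modelled on the proof of Lemma \ref{lemma5}(b), produces representatives fulfilling the nonvanishing requirement of $L^{\Pi^k(\Lambda),\xi_0}_{loc}$, placing $\xi_0$ in $\Pi^p_{k^\ast}$. Each outcome contradicts the choice of $\xi_0$, so $\hat f_3(\xi_0)=0$; density and continuity then give $\hat f_3\equiv 0$.

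With $\hat f_3\equiv 0$ the polynomial identity drops to quadratic in $z$, and round two reruns the same template: on $(\Pi^3\cup\Pi^4)\cap\widetilde{\Pi_p}(\Lambda)$ any three distinct roots force the quadratic to vanish; on $\Pi^2\setminus\Pi^p_{2^\ast}$ the Wiener contradiction uses the inclusion $\Pi^{2^\ast}\subseteq\Pi^p_{2^\ast}$ from Lemma \ref{lemma99}; on $\Pi^1\setminus\Pi^{1^\ast}$ it uses the inclusion chain $L^{E,\xi}_{loc}\subset P^{1,2}[L^{E,\xi}_{loc}]\subset P^{1,p}[L^{E,\xi}_{loc}]$ from Lemma \ref{lemma5}. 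Hence every $\hat f_j$ vanishes at every $\xi_0\in\widetilde{\Pi_p}(\Lambda)$, and so everywhere by continuity, giving $\mu=0$. For necessity I would argue the contrapositive: supposing $\widetilde\Pi(\Lambda)$ is not dense, I would pick an open interval $J$ disjoint from it, so on $J$ one has $\Pi^4\cap J=\emptyset$ and $\Pi^k\cap J\subseteq\Pi^{k^\ast}$ for $k=1,2,3$. Then I would build a nonzero $\mu\in X(\Gamma)$ with $\hat\mu|_\Lambda=0$ by choosing $f_3\in L^1(\mathbb R)$ whose Fourier transform is compactly supported in a subinterval $J'\subset J$ and nonvanishing on $J'$, and defining $\hat f_0,\hat f_1,\hat f_2$ on $J'$ through the local FT data $e_j,\delta_j,\chi_0$ supplied by $\Pi^{3^\ast},\Pi^{2^\ast},\Pi^{1^\ast}$ (schematically, $\hat f_j=e_j\hat f_3$ on the $\Pi^3$ stratum of $J'$, with analogous formulas on the $\Pi^2,\Pi^1$ strata), extended by zero outside $J'$.

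The hard part will be precisely this gluing step for the necessity direction: the dispensable sets furnish only local Fourier-transform representations, and on $J$ the three strata $\Pi^1,\Pi^2,\Pi^3$ can interlace in the totally disconnected manner flagged in the abstract, so matching the local prescriptions across strata and verifying that the resulting $f_j$ lie globally in $L^1(\mathbb R)$ will require a delicate partition-of-unity and compactness argument in the spirit of Babot's three-line analysis. A secondary obstacle, on the sufficiency side, is the kernel-perturbation step for the $\Pi^2$ and $\Pi^3$ cases in round one, which calls for a correction inside the one-dimensional kernel that fulfils the nonvanishing hypothesis of $L^{\Pi^k(\Lambda),\xi_0}_{loc}$ without disturbing the local Fourier-transform property; this should follow the template in the proof of Lemma \ref{lemma5}(b).
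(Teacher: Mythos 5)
Your sufficiency argument is essentially the paper's: the same case analysis over $\Pi^4,\Pi^3,\Pi^2,\Pi^1$, the same use of Wiener's lemma to push a point with $\hat f_3(\xi)\neq 0$ into the relevant dispensable set, and the same inner cascade $\hat f_3\Rightarrow\hat f_2\Rightarrow\hat f_1$ on the thin strata using Lemmas \ref{lemma5} and \ref{lemma99}. Organizing it as two global rounds rather than a per-point cascade changes nothing essential, and your bialternant computation of the $4\times 4$ determinant together with the identity $H_{p-2}(a_0,a_1,a_2)-H_{p-2}(a_0,a_1,a_3)=(a_2-a_3)H_{p-3}(a_0,a_1,a_2,a_3)$ is a welcome explicit justification of the step the paper only asserts in case $(S_1)$.

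The necessity half is where you have a genuine gap, and it sits exactly where you flagged it. You propose to take an interval missing $\widetilde\Pi(\Lambda)$ and build one annihilating measure on all of it by gluing the local Fourier-transform data supplied separately by $\Pi^{1^\ast},\Pi^{2^\ast},\Pi^{3^\ast}$. But these three strata can interlace densely, their local data at different base points are a priori unrelated, and the prescriptions they impose on $\hat f_0,\hat f_1,\hat f_2,\hat f_3$ are normalized differently (on $\Pi^{2^\ast}$ the data $\delta_j$ correspond to a solution with $\hat f_3=0$, on $\Pi^{3^\ast}$ the data $e_j$ to one with $\hat f_3\neq 0$); since the $\hat f_k$ must be continuous, these prescriptions would have to match at every accumulation point of one stratum inside another, and nothing guarantees this. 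The paper does not attempt such a gluing. Instead it shows the interlaced configuration cannot occur: Lemmas \ref{lemma6} and \ref{lemma7} use the continuity of the locally-Fourier function $\delta_1$ and of the discriminant-type symmetric function $\rho$ to pull an entire subinterval down into $\bigcup_{j\geq 2}\Pi^j(\Lambda)$ or into $\Pi^{3^\ast}(\Lambda)\cup\Pi^4(\Lambda)$, and Lemmas \ref{lemma8} and \ref{lemma9} then run a trichotomy showing that the bad interval cannot meet exactly one, exactly two, or all three dispensable sets; a counterexample measure is constructed only in Lemma \ref{lemma8}, where the projection lies in a single dispensable set and a single family $\varphi_k$ works on the whole subinterval, so no gluing is needed. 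That interlacing and pull-down argument is the key idea of the necessity direction and is absent from your proposal; note also that if your direct gluing did go through, one would expect a clean if-and-only-if characterization, which the authors explicitly leave open in Remark \ref{rk7}.
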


\begin{remark}\label{rk7}
In view of Lemma \ref{lemma99}, we infer that $\Pi^{2^\ast}(\Lambda)$ is a proper subset of
$\Pi^p_{2^\ast}(\Lambda)$ for any $p\geq3.$ However, for $p=3,$ Lemma \ref{lemma100} yields
$\Pi^{3^\ast}(\Lambda)=\Pi^p_{3^\ast}(\Lambda).$ Hence for any $p\geq3,$ the set $\widetilde{\Pi_p}(\Lambda)$
is properly contained in $\widetilde\Pi(\Lambda).$ Thus, an analogous result for four lines
problem as compared to three lines result is still open.
\end{remark}

We need the following two lemmas which are required to prove the necessary
part of Theorem \ref{th17}. The main idea behind these lemmas is to pull down
an interval from some of the partitioning sets of the projection $\Pi(\Lambda).$
The above argument helps to negate the assumption that $\widetilde\Pi(\Lambda)$
is not dense in $\mathbb R.$
\begin{lemma}\label{lemma6}
Suppose $I$ is an interval such that $I\cap\Pi^{2^\ast}(\Lambda)$ is dense in $I.$
Then there exists an interval $I'\subset I$ such that $I'\subset\bigcup\limits_{j=2}^4\Pi^j(\Lambda).$
\end{lemma}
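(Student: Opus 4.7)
The plan is to work locally near a base point $\xi_0\in I\cap\Pi^{2^\ast}(\Lambda)$, which exists by the density hypothesis, and to promote the local $L^1$-Fourier control available there to control of the image points on a subinterval. By definition of $\Pi^{2^\ast}(\Lambda)$ there exist an interval $I_{\xi_0}\subset I$ and functions $\varphi_0,\varphi_1\in L^1(\mathbb R)$ with $\delta_j=\hat\varphi_j$ on $I_{\xi_0}\cap\Pi^2(\Lambda)$ for $j=0,1$. Unravelling the system $A^2_\xi X_\xi=B^2_\xi$ that defined $\delta_0,\delta_1$, the two image points $\chi_0(\xi),\chi_1(\xi)$ at each $\xi\in I_{\xi_0}\cap\Pi^2(\Lambda)$ are precisely the two distinct roots of the quadratic
\[Y^2+\hat\varphi_1(\xi)Y+\hat\varphi_0(\xi)=0.\]

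Next I would study the continuous discriminant $D(\xi)=\hat\varphi_1(\xi)^2-4\hat\varphi_0(\xi)$. Since $\xi_0\in\Pi^2(\Lambda)$ has two distinct image points, $D(\xi_0)\neq0$, and continuity lets me replace $I_{\xi_0}$ by an open subinterval $I'\ni\xi_0$ on which $D$ never vanishes. The quadratic formula then produces two continuous branches of roots $r_1(\xi),r_2(\xi)$ on $I'$, which coincide with $\{\chi_0(\xi),\chi_1(\xi)\}$ as unordered pairs for $\xi\in I'\cap\Pi^{2^\ast}(\Lambda)$. In parallel, the $2$-periodicity of $\Lambda$ in the second variable together with its closedness identifies $\Lambda$ with a closed subset of the cylinder $\mathbb R\times(\mathbb R/2\mathbb Z)$, whose fibres are compact; a tube-lemma style argument then forces $\Pi(\Lambda)$ to be closed in $\mathbb R$. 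Density of $\Pi^{2^\ast}(\Lambda)\subset\Pi(\Lambda)$ in $I$ therefore gives $I\subset\Pi(\Lambda)$, and in particular $I'\subset\Pi(\Lambda)$.

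The final step is to show that every $\xi\in I'$ has at least two distinct image points. Fix $\xi\in I'$ and choose $\xi_n\to\xi$ in $I'\cap\Pi^{2^\ast}(\Lambda)$; writing $\eta_0^n,\eta_1^n\in[0,2)$ for the two image points at $\xi_n$, we have $\{e^{\pi i\eta_0^n},e^{\pi i\eta_1^n}\}=\{r_1(\xi_n),r_2(\xi_n)\}$. After relabelling and passing to a subsequence I may assume $e^{\pi i\eta_j^n}\to r_{j+1}(\xi)$ and $\eta_j^n\to\tilde\eta_j\in[0,2]$. Using the $2$-periodicity of $\Lambda$ to replace $\tilde\eta_j=2$ by $0$ if necessary, the closedness of $\Lambda$ yields $(\xi,\tilde\eta_j)\in\Lambda$ and hence $\tilde\eta_j\in\varSigma_\xi$; since $r_1(\xi)\neq r_2(\xi)$ the two resulting image points are distinct, so $\xi\in\bigcup_{j=2}^4\Pi^j(\Lambda)$.

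I expect the main obstacle to be this last limiting step: one has to prevent the two sequences $\{\eta_0^n\}$ and $\{\eta_1^n\}$ from collapsing to a single image point and from escaping the fundamental domain $[0,2)$ without leaving a trace in $\varSigma_\xi$. The non-vanishing of $D$ on $I'$ rules out collapse (as $r_1\neq r_2$ throughout $I'$), while the closedness of $\Lambda$ together with its $2$-periodicity is precisely what transports limits across the boundary $\eta=2$ of the fundamental domain back into $\varSigma_\xi$.
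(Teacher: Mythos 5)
Your proof is correct and follows essentially the same route as the paper: fix a base point where $\delta_j=\hat\varphi_j$ locally, use continuity of the coefficients to isolate a subinterval $I'$ on which the two roots stay separated, and then run a sequential-compactness argument with the closedness and $2$-periodicity of $\Lambda$ to produce two distinct image points at every $\xi\in I'$. Your nonvanishing discriminant $D=\hat\varphi_1^2-4\hat\varphi_0=(\chi_0-\chi_1)^2$ is just an equivalent packaging of the paper's criterion $|\delta_1(\xi)|<2$ (for unit-modulus roots these detect the same degeneration), and your intermediate step that $\Pi(\Lambda)$ is closed is harmless but redundant, since the limiting argument already supplies the image points.
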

\begin{proof}
If $\bar\xi\in I\cap\Pi^{2^\ast}(\Lambda),$ then $\delta_j\in L^{\Pi^2(\Lambda),\bar\xi}_{loc};~j=0,1.$
By hypothesis, $I\cap\Pi^{2^\ast}(\Lambda)$ is dense in $I,$
therefore there exists an interval $I_{\bar\xi}\subset I$ containing $\bar\xi$
such that $\delta_j$ can be extended continuously on $I_{\bar\xi}.$ In addition, $\delta_1$
satisfies
\begin{equation}\label{exp20}
|\delta_1(\bar\xi)|=\left|e^{\pi i\bar\eta_0}+e^{\pi i\bar\eta_1}\right|<2,
\end{equation}
whenever $\bar\xi\in I\cap\Pi^{2^\ast}(\Lambda).$ Since $\delta_1$ is continuous on $I_{\bar\xi},$
we can extract an interval $I'\subset I_{\bar\xi}$ containing $\bar\xi$ such that
$|\delta_1(\xi)|<2~\text{for all}~\xi\in I'.$

\smallskip

 Consequently, $I'\cap\Pi^{2^\ast}(\Lambda)$
is dense in $I'.$ Now for $\xi\in I',$ there exists a sequence $\xi_n\in I'\cap\Pi^{2^\ast}(\Lambda)$
such that $\xi_n\rightarrow\xi.$ Hence the corresponding image sequences
$\eta_j^{(n)}\in\varSigma_{\xi_n}\subseteq [0, 2)$  will have convergent
subsequences, say $\eta_j^{(n_k)}$ which converge to $\eta_j; ~j=0,1.$
Since the set $\Lambda$ is closed, $(\xi, \eta_j)\in\Lambda$ for
$j=0,1.$ Now, we only need to show that $\eta_0\neq\eta_1.$ If possible,
suppose $\eta_0=\eta_1,$ then by the continuity of $\delta_1$ on $I',$
it follows that $|\delta_1(\xi_n)|\rightarrow|\delta_1(\xi)|.$ However,
\[|\delta_1(\xi_{n_k})|=\left|e^{\pi i\eta_0^{(n_k)}}+
e^{\pi i\eta_1^{(n_k)}}\right|\rightarrow 2.\]
That is, $|\delta_1(\xi)|=2,$ which contradicts the fact that
$|\delta_1(\xi)|<2~\text{for all}~\xi\in I'.$ Thus, we infer
that $I'\subset\bigcup\limits_{j=2}^4\Pi^j(\Lambda).$
\end{proof}

\begin{lemma}\label{lemma7}
Let $I$ be an interval such that $I\cap\Pi^{3^\ast}(\Lambda)$
is dense in $I.$ Then there exists an interval $I'\subset I$ such that
$I'$ is contained in $\Pi^{3^\ast}(\Lambda)\cup\Pi^4(\Lambda).$
\end{lemma}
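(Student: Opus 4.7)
The plan is to mirror the structure of the proof of Lemma \ref{lemma6}, replacing the two--image apparatus ($\delta_0,\delta_1$ together with the pointwise bound $|\delta_1|<2$) by its three--image analogue: the coefficients $(e_0,e_1,e_2)$ of the monic cubic whose roots are $\chi_0(\xi),\chi_1(\xi),\chi_2(\xi)$, together with the nonvanishing of its discriminant.

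First I would fix $\bar\xi\in I\cap \Pi^{3^\ast}(\Lambda)$. By definition of $\Pi^{3^\ast}(\Lambda)$, there is an interval $I_{\bar\xi}$ containing $\bar\xi$ and functions $\varphi_j\in L^1(\mathbb R)$, $j=0,1,2$, with $e_j=\hat\varphi_j$ on $I_{\bar\xi}\cap \Pi^3(\Lambda)$. Since $I\cap \Pi^{3^\ast}(\Lambda)$ is dense in $I$, we may shrink $I_{\bar\xi}$ so that $I_{\bar\xi}\subset I$ and regard each $e_j$ as continuously extended to all of $I_{\bar\xi}$ via $\hat\varphi_j$. At $\bar\xi$, the three unit--modulus numbers $\chi_0(\bar\xi),\chi_1(\bar\xi),\chi_2(\bar\xi)$ are distinct roots of $z^3+e_2(\bar\xi)z^2+e_1(\bar\xi)z+e_0(\bar\xi)$, so the discriminant
\[
\Delta(\xi)=18\,e_2(\xi)e_1(\xi)e_0(\xi)-4\,e_2(\xi)^3 e_0(\xi)+e_2(\xi)^2 e_1(\xi)^2-4\,e_1(\xi)^3-27\,e_0(\xi)^2
\]
of this cubic, being a fixed polynomial in the continuously extended $e_j$'s, is a continuous function on $I_{\bar\xi}$ nonzero at $\bar\xi$. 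Extracting a subinterval $I'\subset I_{\bar\xi}$ containing $\bar\xi$ on which $\Delta$ never vanishes is then immediate.

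Next I would carry out the limit extraction. Fix any $\xi\in I'$ and pick $\xi_n\in I'\cap \Pi^{3^\ast}(\Lambda)$ with $\xi_n\to\xi$. Each $\xi_n$ has three distinct images $\eta_0^{(n)},\eta_1^{(n)},\eta_2^{(n)}\in[0,2)$ with $\chi_j(\xi_n)=e^{\pi i\eta_j^{(n)}}$, and after passing to a subsequence $\eta_j^{(n)}\to\eta_j\in[0,2]$ for $j=0,1,2$. Since $e_0,e_1,e_2$ are, on $\Pi^3(\Lambda)$, the elementary symmetric functions of $\chi_0,\chi_1,\chi_2$, one has the factorization $\Delta(\xi_n)=\prod_{i<j}(\chi_i(\xi_n)-\chi_j(\xi_n))^2$; by continuity the right-hand side tends to $\prod_{i<j}(e^{\pi i\eta_i}-e^{\pi i\eta_j})^2$, while the left-hand side tends to $\Delta(\xi)\neq 0$. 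A collision $\eta_i\equiv \eta_j\pmod{2}$ for some $i\neq j$ would force the product to vanish, a contradiction. Hence $\eta_0,\eta_1,\eta_2$ are pairwise distinct modulo $2$, and closedness of $\Lambda$ places $(\xi,\eta_0),(\xi,\eta_1),(\xi,\eta_2)$ in $\Lambda$. Thus $\xi\in \Pi^3(\Lambda)\cup \Pi^4(\Lambda)$. If moreover $\xi\in \Pi^3(\Lambda)\setminus \Pi^4(\Lambda)$, the identity $e_j=\hat\varphi_j$ on $I_{\bar\xi}\cap \Pi^3(\Lambda)$, valid in particular in a neighbourhood of $\xi$, certifies $e_j\in L^{\Pi^3(\Lambda),\xi}_{loc}$ and so $\xi\in \Pi^{3^\ast}(\Lambda)$.

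The main obstacle I expect is pinpointing the correct open condition that forces three limiting unit--modulus roots to remain distinct. The naive modulus bound $|e_2|<3$, the direct analogue of $|\delta_1|<2$ used in Lemma \ref{lemma6}, only rules out the simultaneous collision of all three roots into one and is thus inadequate; the discriminant of the cubic is the precise replacement, and once this replacement is in place the remainder of the argument runs in close parallel to the two--image case.
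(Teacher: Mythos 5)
Your proof is correct and follows essentially the same route as the paper's: extend the coefficient functions $e_j$ continuously via the $\hat\varphi_j$, locate a subinterval on which a symmetric polynomial detecting root collisions stays nonzero, and pass to limits using the closedness of $\Lambda$ to rule out coincidences among the three limiting images. The only difference is in the choice of that auxiliary function: where you use the genuine discriminant $\prod_{i<j}\left(\chi_i-\chi_j\right)^2$, the paper uses $\rho=\left(a^3(b-c)+b^3(c-a)+c^3(a-b)\right)^2$, which equals the discriminant times $(a+b+c)^2$; your choice is if anything the cleaner one, since it vanishes exactly when two roots collide.
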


\begin{proof}
Let $\bar\xi\in I\cap\Pi^{3^\ast}(\Lambda),$ then $e_j\in L^{\Pi^3(\Lambda),\bar \xi}_{loc};~j=0,1,2.$
For $p=3,$ Equation (\ref{exp19}) yields \[(e_0,e_1,e_2)=\left(-abc,(ab+bc+ca),-(a+b+c)\right),\]
where $(a, b, c)=(\chi_0,\chi_1,\chi_2).$ Hence $e_j;~j=0,1,2$ are constant multiples of the
elementary symmetric polynomials. Now, we define a function $\rho$ on $\Pi^3(\Lambda)$ by
\[\rho=\left(a^3(b-c)+b^3(c-a)+c^3(a-b)\right)^2.\]
Since $\rho$ is a symmetric polynomial in $a,b,c,$ by the fundamental theorem of symmetric polynomials,
$\rho$ can be expressed as a polynomial in $e_j;~j=0,1,2.$ Moreover, $\rho(\bar\xi)\neq0.$ Hence
it follows that $\rho\in L^{\Pi^3(\Lambda),\bar \xi}_{loc}.$ By hypothesis, $I\cap\Pi^{3^\ast}(\Lambda)$
is dense in $I,$ there exists an interval $I_{\bar\xi}\subset I$ containing $\bar\xi$
such that $\rho$ can be continuously extended on $I_{\bar\xi}.$ Thus, by continuity
of $\rho$ on $I_{\bar\xi},$ there exists an interval $J\subset I_{\bar\xi}$ containing
$\bar\xi$ such that $\rho(\xi)\neq0$ for all $\xi\in J.$

\smallskip

Consequently, $J\cap\Pi^{3^\ast}(\Lambda)$ is dense in $J$ and hence for $\xi\in J,$
there exists a sequence $\xi_n\in J\cap\Pi^{3^\ast}(\Lambda)$ such that $\xi_n\rightarrow\xi.$
Thus, the corresponding image sequences $\eta_j^{(n)}\in\varSigma_{\xi_n}\subseteq [0, 2)$
will have convergent subsequences, say $\eta_j^{(n_k)}$ which converge to $\eta_j; ~j=0,1,2$.
Since the set $\Lambda$ is closed, $(\xi, \eta_j)\in\Lambda$ for $j=0,1,2.$

\smallskip

Next, we claim that all of $\eta_j;~j=0,1,2$ are distinct. On the contrary,
suppose all are equal or any two of them are equal. Then by the continuity of $\rho$
on $J,$ it follows that $\rho(\xi)=0,$ which contradicts the fact that $\rho(\xi)\neq0$
for all $\xi\in J.$ Hence we infer that $ J\subset\bigcup\limits_{j=3}^4\Pi^j(\Lambda).$
Further, using the facts that $e_j\in L^{\Pi^3(\Lambda),\bar\xi}_{loc}$ and $J\cap\Pi^{3^\ast}(\Lambda)$
is dense in $J,$ $e_j$ can be extended continuously on an interval $I'\subset J$ containing
$\bar\xi$ such that $e_j(\xi)\neq0$ for all $\xi\in I'.$ That is, if $\xi\in I'\cap\Pi^3(\Lambda),$
then $e_j\in L^{\Pi^3(\Lambda),\xi}_{loc}$ and hence $\xi\in\Pi^{3^\ast}(\Lambda).$ Thus,
we conclude that  $I'\subset\Pi^{3^\ast}(\Lambda)\cup\Pi^4(\Lambda).$
\end{proof}

\noindent {\em Proof of Theorem \ref{th17}.}
We first prove the sufficient part of Theorem \ref{th17}. Suppose the set $\widetilde{\Pi_p}(\Lambda)$
is dense in $\mathbb R.$ Then we show that $(\Gamma, \Lambda)$ is a Heisenberg uniqueness pair.
For $\hat\mu\vert_\Lambda=0,$ we claim that $\hat{f_k}\vert_{\widetilde{\Pi_p}(\Lambda)}=0,$
whenever $k\in F_o.$ Since $\hat{f_k}$ is a continuous function which vanishes on a dense set
$\widetilde{\Pi_p}(\Lambda),$ it follows that $\hat{f_k}\equiv0$ for all $k\in F_o.$ Thus, $\mu=0.$

\smallskip

As the projection $\Pi(\Lambda)$ is decomposed into the four pieces, the proof of the
above assertion will be carried out in the following four cases.

\smallskip

\noindent $\bf{(S_1).} $ If $\xi\in\Pi^4(\Lambda),$ then there exist at least four distinct
$\eta_j\in\varSigma_\xi$ such that $\hat\mu(\xi,\eta_j)=0$ for all $j\in F_o.$ Hence $\hat{f_k}(\xi); ~k\in F_o$ satisfy
a homogeneous system of four equations. As $\xi\in\Pi^4(\Lambda),$ by using the
property that $H_{p-2}(a_0,a_{1},a_2)\neq H_{p-2}(a_0,a_{1},a_{3}),$ we infer
that $\hat{f_k}(\xi)=0$ for all $k\in F_o.$

\smallskip

\noindent $\bf{(S_2).} $ If $\xi\in\Pi^3(\Lambda),$ then there exist at least three
distinct $\eta_j\in\varSigma_\xi$ which satisfy $\hat\mu(\xi,\eta_j)=0;$ $j=0,1,2.$
If $\hat{f_3}(\xi)=0,$ then we get $\hat{f_k}(\xi)=0$ for $k=0,1,2.$
On the other hand if $\hat{f_3}(\xi)\neq0,$ then we can substitute
\begin{equation}\label{exp2}
\hat{f_j}(\xi)=\tau_j(\xi)\hat{f_3}(\xi),
\end{equation}
where $\tau_j$ are defined on $\Pi^3(\Lambda)$ for $j=0,1,2.$ Hence $X_{\xi}=\left(\tau_0(\xi),\tau_1(\xi),\tau_2(\xi)\right)$
will satisfy the system of equations $A^3_\xi X_\xi=B^p_\xi.$ By applying the Wiener lemma to Equations (\ref{exp2}),
we infer that $\tau_j\in L^{\Pi^3(\Lambda),\xi}_{loc};~j=0,1,2.$ That is, $\xi\in\Pi^p_{3^\ast}(\Lambda).$
Thus for $\xi\in{\Pi^{3}(\Lambda)\smallsetminus\Pi^p_{3^\ast}(\Lambda)},$ we conclude that $\hat{f_k}(\xi)=0$
for all $k\in F_o.$

\smallskip

\noindent $\bf{(S_3).} $ If $\xi\in\Pi^2(\Lambda),$ then there exist two distinct
$\eta_j\in\varSigma_\xi$ for which $\hat\mu(\xi,\eta_j)=0,$ whenever $j=0, 1.$
That is,
\begin{equation}\label{exp9}
\hat{f_0}(\xi)+\chi_j(\xi)\hat{f_1}(\xi)+\chi_j^2(\xi)\hat{f_2}(\xi)+\chi_j^p(\xi)\hat{f_3}(\xi)=0,
\end{equation}
where $\chi_j(\xi)=e^{\pi i\eta_j};~j=0, 1.$ If $\hat{f_3}(\xi)\neq0,$
then by applying the Wiener lemma to Equations (\ref{exp9}), it follows that
$\xi\in\Pi^p_{2^\ast}(\Lambda).$ That is, if $\xi\in{\Pi^2(\Lambda)\smallsetminus\Pi^p_{2^\ast}(\Lambda)},$
then $\hat{f_3}(\xi)=0.$

\smallskip

Further, if $\hat{f_3}(\xi)=0$ and $\hat{f_2}(\xi)\neq0,$ then an application of the
Wiener lemma to Equations (\ref{exp9}), it follows that $\xi\in\Pi^{2^\ast}(\Lambda).$
By Lemma \ref{lemma99}, $\xi\in\Pi^p_{2^\ast}(\Lambda).$ Thus for
$\xi\in{\Pi^2(\Lambda)\smallsetminus\Pi^p_{2^\ast}(\Lambda)},$
we infer that $\hat{f_k}(\xi)=0$ for all $k\in F_o.$

\smallskip

\noindent $\bf{(S_4).} $ If $\xi\in\Pi^1(\Lambda),$ then there exists a unique
$\eta_0\in\varSigma_\xi$ for which $\hat\mu(\xi,\eta_0)=0.$ That is,
\begin{equation}\label{exp10}
\hat{f_0}(\xi)+\chi_0(\xi)\hat{f_1}(\xi)+\chi_{0}^2(\xi)\hat{f_2}(\xi)+\chi_0^p(\xi)\hat{f_3}(\xi)=0,
\end{equation}
where $\chi_0(\xi)=e^{\pi i\eta_0}.$ If $\hat{f_3}(\xi)\neq0,$ then by applying the Wiener lemma to
Equation (\ref{exp10}), it implies that $\chi_0\in P^{1,p}[L^{\Pi^1(\Lambda),\xi}_{loc}].$ That is,
$\xi\in\Pi^{1^\ast}(\Lambda).$ Thus for $\xi\in{\Pi^1(\Lambda)\smallsetminus\Pi^{1^\ast}(\Lambda)},$
we have $\hat{f_3}(\xi)=0.$

\smallskip

Further, if $\hat{f_3}(\xi)=0$ and $\hat{f_2}(\xi)\neq0,$ then an application of the Wiener lemma to
Equation (\ref{exp10}), yields $\chi_0\in P^{1,2}[L^{\Pi^1(\Lambda),\xi}_{loc}].$ By Lemma
\ref{lemma5}, it follows that  $\xi\in\Pi^{1^\ast}(\Lambda).$ That is, if
$\xi\in{\Pi^1(\Lambda)\smallsetminus\Pi^{1^\ast}(\Lambda)},$ then $\hat{f_k}(\xi)=0$
for $k=2,3.$

\smallskip

Finally, if $\hat{f_k}(\xi)=0$ for $k=2,3$ and $\hat{f_1}(\xi)\neq0,$ then by applying the Wiener lemma
to Equation (\ref{exp10}), we infer that $\chi_0\in L^{\Pi^1(\Lambda),\xi}_{loc}.$ By Lemma \ref{lemma5},
it follows that $\xi\in\Pi^{1^\ast}(\Lambda).$ Thus for $\xi\in{\Pi^1(\Lambda)\smallsetminus\Pi^{1^\ast}(\Lambda)},$
we conclude that $\hat{f_k}(\xi)=0$ for all $k\in F_o.$

\smallskip

Now, we prove the necessary part of Theorem \ref{th17}. Suppose $\left(\Gamma, \Lambda\right)$
is a Heisenberg uniqueness pair. Then we claim that the set $\widetilde\Pi(\Lambda)$ is dense in
$\mathbb R.$  We observe that this is possible if the dispensable sets $\Pi^{j^\ast}(\Lambda);~j=1,2,3$
interlace to each other, though these sets are disjoint among themselves.

\smallskip

If possible, suppose $\widetilde\Pi(\Lambda)$ is not dense in $\mathbb R.$ Then there
exists an open interval $I_o\subset\mathbb R$ such that $I_o\cap\widetilde\Pi(\Lambda)$
is empty. This in turn implies that
\begin{equation}\label{exp8}
\Pi(\Lambda)\cap I_o=\left(\bigcup_{j=1}^3\Pi^{j^\ast}(\Lambda)\right)\cap I_o.
\end{equation}
Thus from (\ref{exp8}), it follows that $I_o$ intersects only the dispensable sets.
Now, the remaining part of the proof of Theorem \ref{th17} is a consequence of the
following two lemmas which provide the interlacing property of the dispensable sets
$\Pi^{j^\ast}(\Lambda);~j=1,2,3.$

\begin{lemma}\label{lemma8}
There does not exist any interval $J\subset I_o$ such that
$\Pi(\Lambda)\cap J$ is contained in $\Pi^{j^\ast}(\Lambda);~j=1,2,3.$
\end{lemma}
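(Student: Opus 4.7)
The plan is to argue by contradiction, handling the three possibilities $j=1,2,3$ individually. In each case, assume there is an interval $J\subset I_o$ such that $\Pi(\Lambda)\cap J\subset\Pi^{j^\ast}(\Lambda)$; since $\Pi(\Lambda)$ is assumed dense in $\mathbb R$, the intersection $\Pi^{j^\ast}(\Lambda)\cap J$ is automatically dense in $J$. The broad strategy then mirrors the proofs of Lemmas \ref{lemma6} and \ref{lemma7}: use density plus the local Fourier-transform structure of the dispensable set to pass to a subinterval $I'\subset J$ sitting inside a union of $\Pi^k(\Lambda)$'s, and then exploit $I'\subset I_o$ (where the higher-index pieces are forbidden) to force $I'$ into the single set $\Pi^{j^\ast}(\Lambda)$. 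The final contradiction will come from the fact that an entire interval cannot lie inside a single $\Pi^{j^\ast}(\Lambda)$ without violating the interlacing phenomenon suggested by the statement.

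For $j=3$, apply Lemma \ref{lemma7} directly: density of $\Pi^{3^\ast}(\Lambda)\cap J$ in $J$ yields $I'\subset J$ with $I'\subset\Pi^{3^\ast}(\Lambda)\cup\Pi^4(\Lambda)$, and since $I_o\cap\Pi^4(\Lambda)=\emptyset$ we must have $I'\subset\Pi^{3^\ast}(\Lambda)$. The defining equations of $\Pi^{3^\ast}(\Lambda)$ then place three unimodular continuous root functions $\chi_0,\chi_1,\chi_2$ on $I'$ that satisfy a monic polynomial whose coefficients locally agree with Fourier transforms of $L^1$ functions. Continuity combined with Wiener's Lemma \ref{lemma4} and the Riemann-Lebesgue decay of the coefficients should produce the contradiction, since the relation must persist on the whole of $I'$ while the unimodularity of the $\chi_j$'s forbids the coefficients from decaying at infinity as required for $L^1$-Fourier transforms.

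For $j=2$, the argument is parallel but uses Lemma \ref{lemma6}: obtain a subinterval $I'\subset J$ with $I'\subset\bigcup_{k=2}^{4}\Pi^k(\Lambda)$, remove $\Pi^4(\Lambda)$ via $I'\subset I_o$, and remove the $\Pi^3(\Lambda)$ part using the hypothesis $\Pi(\Lambda)\cap J\subset\Pi^{2^\ast}(\Lambda)\subset\Pi^2(\Lambda)$, so that $I'\subset\Pi^{2^\ast}(\Lambda)$. The quadratic identity encoded in the definition of $\Pi^{2^\ast}(\Lambda)$ then plays the same role as the cubic in the previous case, and the same Wiener-lemma/Riemann-Lebesgue contradiction closes the argument.

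The case $j=1$ is the most delicate, since no ready-made analogue of Lemmas \ref{lemma6} and \ref{lemma7} is on hand. The plan is first to prove such an analogue: show that if $J\cap\Pi^{1^\ast}(\Lambda)$ is dense in $J$, then there is a subinterval $I'\subset J$ on which the unique-image function $\chi_0$ extends continuously and $I'\subset\bigcup_{k=1}^{4}\Pi^k(\Lambda)$. Trimming against $I_o$ then leaves $I'\subset\Pi^{1^\ast}(\Lambda)$, after which the polynomial identity $\chi_0^p+\hat\varphi_2\chi_0^2+\hat\varphi_1\chi_0+\hat\varphi_0=0$ together with $|\chi_0|=1$ must be shown to be inconsistent on an interval. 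I expect this last step to be the main obstacle: producing the $\Pi^{1^\ast}$-analogue of Lemmas \ref{lemma6}--\ref{lemma7} requires some fresh symmetric-polynomial bookkeeping, and extracting the final contradiction from the membership relation $P^{1,p}$ will likely require iterated use of Wiener's lemma in the spirit of Lemma \ref{lemma5}, combined with a careful analysis of how a unimodular continuous root of a polynomial with $L^1$-Fourier-transform coefficients forces one of those coefficients to vanish identically on $I'$.
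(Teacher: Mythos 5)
Your proposal misses the essential mechanism of this lemma and relies on a contradiction that does not exist. Lemma \ref{lemma8} is not an intrinsic statement about the geometry of the dispensable sets: it is proved in the paper by invoking the standing hypothesis of the necessity direction, namely that $\left(\Gamma,\Lambda\right)$ \emph{is} a Heisenberg uniqueness pair. The paper's argument is a construction: if $\Pi(\Lambda)\cap J\subset\Pi^{j^\ast}(\Lambda)$, then the defining local identity of the dispensable set supplies $\varphi_k\in L^1(\mathbb R)$ with $\chi_j^p+\hat\varphi_2\chi_j^2+\hat\varphi_1\chi_j+\hat\varphi_0=0$ on $I_\xi\cap\Pi^j(\Lambda)$; choosing $f_3\in L^1(\mathbb R)$ with $\hat f_3(\xi)\neq0$ and $\operatorname{supp}\hat f_3\subset I_\xi$, and setting $f_k=f_3\ast\varphi_k$, one obtains a \emph{nonzero} measure $\mu$ supported on $\Gamma$ with $\hat\mu\vert_\Lambda=0$ (using Equation (\ref{exp8}) to see that $\Lambda$ only meets the strip over $I_\xi$ inside the dispensable set). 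That contradicts the HUP assumption. Your proposal never uses the HUP hypothesis at all, so it is attempting to prove a statement that is simply false: without that hypothesis an interval can perfectly well sit inside a single dispensable set (e.g.\ for $\Lambda=\mathbb R\times\{0\}$, every $\xi$ lies in $\Pi^{1^\ast}(\Lambda)$ since the constant function $1$ locally agrees with Fourier transforms of $L^1$ functions).

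The specific contradiction you propose --- that unimodularity of the $\chi_j$'s is incompatible, via Riemann--Lebesgue, with the coefficients being Fourier transforms of $L^1$ functions --- does not hold. The memberships in $L^{E,\xi}_{loc}$ and $P^{1,p}[L^{E,\xi}_{loc}]$ only require agreement with $\hat\varphi$ on a \emph{bounded} interval intersected with $E$; Riemann--Lebesgue constrains $\hat\varphi$ at infinity and says nothing about its values on a compact set, which is exactly why the paper introduces these local classes in the first place (see the discussion preceding definitions $\textbf{(A)}$--$\textbf{(C)}$). Likewise your invocations of Lemmas \ref{lemma6} and \ref{lemma7} are misplaced here: those lemmas are the engine of Lemma \ref{lemma9} (the two-set interlacing case), where one needs to promote density of one dispensable set into an interval contained in higher-index pieces; in Lemma \ref{lemma8} the hypothesis already hands you an interval on which $\Pi(\Lambda)$ lies in a single $\Pi^{j^\ast}(\Lambda)$, and the correct move is to exploit the local $L^1$-Fourier data to build the annihilating measure directly.
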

\begin{proof}
On the contrary, suppose there exists an interval $J\subset I_o$ such that
$\Pi(\Lambda)\cap J\subset\Pi^{j^\ast}(\Lambda),~\text{for some}~j\in\{1,2,3\}.$
Since $\Pi^{j^\ast}(\Lambda);~j=1,2,3$ are disjoint among themselves, there could be
three possibilities.
 \smallskip

\noindent $\bf{(a).}$ If $\xi\in\Pi(\Lambda)\cap J\subset\Pi^{1^\ast}(\Lambda),$
then $\chi_0\in P^{1,p}[L^{\Pi^1(\Lambda),\xi}_{loc}].$ Hence there exists
an interval $I_\xi\subset J$ containing $\xi$ and $\varphi_k\in L^1(\mathbb R);~k=0,1,2$
such that
\[\chi_0^p+\hat{\varphi}_2\chi_0^2+\hat{\varphi}_1\chi_0+\hat{\varphi}_0=0\]
on $I_\xi\cap\Pi^1(\Lambda).$ Now, consider a function $f_3\in L^1(\mathbb R)$ such that
$\hat{f_3}(\xi)\neq0$ and $\text{supp }\hat{f_3}\subset I_\xi.$ Let $f_k=f_3\ast\varphi_k;~k=0,1,2.$
Then we can construct a Borel measure $\mu$ which is supported on $\Gamma$ such that
\[\hat\mu(\overline{\xi},\overline{\eta})=\hat{f_0}(\overline{\xi}) +
\chi_0(\overline{\xi})\hat{f_1}(\overline{\xi}) + \chi_0^2(\overline{\xi})\hat{f_2}(\overline{\xi}) +
\chi_0^p(\overline{\xi})\hat{f_3}(\overline{\xi})=0\]
for all $\overline\xi\in I_\xi\cap\Pi^{1^\ast}(\Lambda),$ where $\overline\eta\in\varSigma_{\overline\xi}.$
Since (\ref{exp8}) yields $I_\xi\cap\Pi(\Lambda)=I_\xi\cap\Pi^{1^\ast}(\Lambda),$ it implies that
$\hat\mu\vert_\Lambda=0.$ However, $\mu$ is a non-zero measure which contradicts the fact that
$\left(\Gamma, \Lambda\right)$ is a HUP.

\smallskip

\noindent $\bf{(b).}$  If $\xi\in\Pi(\Lambda)\cap J\subset\Pi^{2^\ast}(\Lambda),$ then
by Lemma \ref{lemma99}, $\xi\in\Pi^p_{2^\ast}(\Lambda).$ Hence there exists an interval
$I_\xi\subset J$ containing $\xi$ and $\varphi_k\in L^1(\mathbb R);~k=0,1,2$
such that
\[\chi_j^p+\hat{\varphi}_2\chi_j^2+\hat{\varphi}_1\chi_j+\hat{\varphi}_0=0\]
on $I_\xi\cap\Pi^2(\Lambda)$ for $j=0,1.$ Let $f_3\in L^1(\mathbb R)$ be such that
$\hat{f_3}(\xi)\neq0$ and $\text{supp }\hat{f_3}\subset I_\xi.$ Denote $f_k=f_3\ast\varphi_k;~k=0,1,2.$
Then we can construct a Borel measure $\mu$ that satisfies
\[\hat\mu(\bar\xi,\bar{\eta_j})=\hat{f_0}(\bar\xi) +
\chi_j(\bar\xi)\hat{f_1}(\bar\xi) + \chi_j^2(\bar\xi)\hat{f_2}(\bar\xi) +
\chi_j^p(\bar\xi)\hat{f_3}(\bar\xi)=0\]
for all $\bar\xi\in I_\xi\cap\Pi^{2^\ast}(\Lambda)$ and $j=0,1.$
Since $I_\xi\cap\Pi(\Lambda)=I_\xi\cap\Pi^{2^\ast}(\Lambda),$ it follows that
$\hat\mu\vert_\Lambda=0,$ though $\mu$ is a non-zero measure.

\smallskip

\noindent $\bf{(c).} $ If $\xi\in\Pi(\Lambda)\cap J\subset\Pi^{3^\ast}(\Lambda),$ then
by Lemma \ref{lemma100}, it follows that $\xi\in\Pi^p_{3^\ast}(\Lambda).$ As $\tau_k\in L^{\Pi^3(\Lambda),\xi}_{loc};~k=0,1,2,$
there exists an interval $I_\xi\subset J$ containing $\xi$ and $\varphi_k\in L^1(\mathbb R)$
such that $\hat{\varphi}_k=\tau_k$ on $I_\xi\cap\Pi^3(\Lambda)$ for $k=0,1,2.$ Let
$f_3\in L^1(\mathbb R)$ be such that $\hat{f_3}(\xi)\neq0$ and $\text{supp }\hat{f_3}\subset I_\xi.$
Denote $f_k=f_3*\varphi_k;~k=0,1,2.$ Since  $X_{\xi}=(\tau_0(\xi),\tau_1(\xi),\tau_2(\xi))$ satisfies
$A^3_{\xi} X_{\xi}=B^p_{\xi},$ we have
\[\tau_0+\chi_j\tau_1+\chi_j^2\tau_2+\chi_j^p=0\] on $I_\xi\cap\Pi^3(\Lambda)$ for $j=0,1,2.$
Hence we can construct a Borel measure $\mu$ such that
\[\hat\mu(\bar\xi,\bar{\eta_j})=\hat{f_0}(\bar\xi)+\chi_j(\bar\xi)\hat{f_1}(\bar\xi)+
\chi_j^2(\bar\xi)\hat{f_2}(\bar\xi)+\chi_j^p(\bar\xi)\hat{f_3}(\bar\xi)=0\]
for all $\bar\xi\in I_\xi\cap\Pi^{3^\ast}(\Lambda)$ and $j=0,1,2.$
As $I_\xi\cap\Pi(\Lambda)=I_\xi\cap\Pi^{3^\ast}(\Lambda),$ we infer that $\hat\mu\vert_\Lambda=0,$
even though $\mu$ is a non-zero measure.
\end{proof}

The next lemma is to deal with the situation that any interval $J\subset I_o$
can not contain only the points of any pair of dispensable sets.
\begin{lemma}\label{lemma9}
There does not exist any interval $J\subset I_o$ such that $\Pi(\Lambda)\cap J$
is contained in $\Pi^{j^\ast}(\Lambda)\cup\Pi^{k^\ast}(\Lambda)~\forall~j\neq k$
and $j,k\in\{1,2,3\}.$
\end{lemma}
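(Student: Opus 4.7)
\noindent\emph{Proof plan.} I would argue by contradiction. Suppose there exist distinct indices $j,k\in\{1,2,3\}$ and an interval $J\subset I_o$ with $\Pi(\Lambda)\cap J\subset\Pi^{j^\ast}(\Lambda)\cup\Pi^{k^\ast}(\Lambda)$. The standing density hypothesis on $\Pi(\Lambda)$ in Theorem~\ref{th17} forces $\Pi^{j^\ast}(\Lambda)\cup\Pi^{k^\ast}(\Lambda)$ to be dense in $J$. A Baire-type dichotomy (the closure of a union of two nowhere-dense sets is the union of two closed sets with empty interior and therefore still has empty interior, so a union of two sets which is dense in an interval must have at least one summand dense in some open subinterval) will serve as the main localization tool.

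Write $m=\max(j,k)\in\{2,3\}$ and $n=\min(j,k)\in\{1,2\}$. The proof splits according to whether $\Pi^{m^\ast}(\Lambda)$ is dense in some open subinterval $J_1\subset J$ or is nowhere dense in $J$. In the dense case I invoke Lemma~\ref{lemma6} (when $m=2$) or Lemma~\ref{lemma7} (when $m=3$) to extract $J_2\subset J_1$ satisfying $J_2\cap\Pi^1(\Lambda)=\emptyset$, respectively $J_2\cap\Pi(\Lambda)\subset\Pi^{3^\ast}(\Lambda)\cup\Pi^4(\Lambda)$. Intersecting either conclusion with the running inclusion $\Pi(\Lambda)\cap J_2\subset\Pi^{j^\ast}(\Lambda)\cup\Pi^{k^\ast}(\Lambda)$ and invoking the pairwise disjointness of the partition classes $\Pi^1,\Pi^2,\Pi^3,\Pi^4$ collapses the right-hand side to $\Pi^{m^\ast}(\Lambda)$ alone, contradicting Lemma~\ref{lemma8}.

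In the complementary nowhere-dense case, $\overline{\Pi^{m^\ast}(\Lambda)}\cap J$ has empty interior, so I pick an open subinterval $J_2\subset J$ disjoint from $\overline{\Pi^{m^\ast}(\Lambda)}$ (which is possible precisely because the complement of a closed set with empty interior in $J$ is open and dense in $J$). The running inclusion then gives $\Pi(\Lambda)\cap J_2\subset\Pi^{n^\ast}(\Lambda)$, a single dispensable set, and Lemma~\ref{lemma8} once more supplies the contradiction. This covers all three pairs uniformly, since the Baire dichotomy guarantees we are always in one of these two regimes.

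I expect the main obstacle to be bookkeeping rather than any new analytic input: at each shrinking step one must verify that the disjointness of $\Pi^1,\ldots,\Pi^4$ really does collapse $\Pi^{j^\ast}\cup\Pi^{k^\ast}$ to a single $\Pi^{\ell^\ast}$ on the chosen $J_2$, and that the density (or nowhere-denseness) of the governing dispensable set survives the passage from $J$ or $J_1$ to $J_2$. The analytic content is already packaged in Lemmas~\ref{lemma6} and~\ref{lemma7}; Lemma~\ref{lemma9} then emerges as the purely topological consequence of those together with Lemma~\ref{lemma8}.
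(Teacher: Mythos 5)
Your proposal is correct and follows essentially the same route as the paper: argue by contradiction, use Lemma \ref{lemma8} to force the higher-indexed dispensable set to be dense on a subinterval, apply Lemma \ref{lemma6} or \ref{lemma7} there, and collapse the resulting interval to a single dispensable set via disjointness of the $\Pi^j(\Lambda)$, contradicting Lemma \ref{lemma8} again. Your Baire-dichotomy packaging merely unifies the paper's three explicit cases (a)--(c) into one argument, with the ``nowhere dense'' branch playing the role of the paper's density claim.
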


\begin{proof}
On the contrary, suppose there exists an interval $J\subset I_o$ such that
$\Pi(\Lambda)\cap J\subset\Pi^{j^\ast}(\Lambda)\cup\Pi^{k^\ast}(\Lambda)~\text{for some}~j\neq k$
and $j,k\in\{1,2,3\}.$ Then we have the following three cases:
\smallskip

\noindent $\textbf{(a).}$ If $\Pi(\Lambda)\cap J\subset\Pi^{1^\ast}(\Lambda)\cup\Pi^{2^\ast}(\Lambda),$
then Equation (\ref{exp8}) yields
\begin{equation}\label{exp18}
J\cap\Pi(\Lambda)=J\cap\left(\Pi^{1^\ast}(\Lambda)\cup\Pi^{2^\ast}(\Lambda)\right).
\end{equation}
We claim that $J\cap\Pi^{2^\ast}(\Lambda)$ is dense in $J.$ If possible, suppose
there exists an interval $I\subset J$ such that $\Pi^{2^\ast}(\Lambda)\cap I=\varnothing.$
Then from (\ref{exp18}), we get $I\cap\Pi(\Lambda)=I\cap\Pi^{1^\ast}(\Lambda)\subset\Pi^{1^\ast}(\Lambda)$
which contradicts Lemma \ref{lemma8}. By Lemma \ref{lemma6}, there exists
an interval $I'\subset J$ such that $I'\subset\bigcup\limits_{j=2}^4\Pi^j(\Lambda).$
This contradicts the assumption that $I_o$ intersects only the dispensable sets.

\smallskip

\noindent $\textbf{(b).}$ If $\Pi(\Lambda)\cap J\subset\Pi^{1^\ast}(\Lambda)\cup\Pi^{3^\ast}(\Lambda),$
then $J\cap\Pi(\Lambda)=J\cap\left(\Pi^{1^\ast}(\Lambda)\cup\Pi^{3^\ast}(\Lambda)\right).$
As similar to the case $\textbf{(a)},$ $J\cap\Pi^{3^\ast}(\Lambda)$ is also dense in $J.$
Hence by Lemma \ref{lemma7}, there exists an interval $I'\subset J$ such that $I'$
is contained in $\Pi^{3^\ast}(\Lambda)\cup\Pi^4(\Lambda).$ Thus in view of Lemma
\ref{lemma8}, we have arrived at a contradiction to the assumption that $I_o$
intersects only the dispensable sets.

\smallskip

\noindent $\textbf{(c).}$ If $\Pi(\Lambda)\cap J\subset\Pi^{2^\ast}(\Lambda)\cup\Pi^{3^\ast}(\Lambda),$
then $J\cap\Pi(\Lambda)=J\cap(\Pi^{2^\ast}(\Lambda)\cup\Pi^{3^\ast}(\Lambda)).$
Hence it follows that $J\cap\Pi^{3^\ast}(\Lambda)$ is dense in $J.$ By using Lemma \ref{lemma7},
there exists an interval $I'\subset J$ such that $I'$ is contained in $\Pi^{3^\ast}(\Lambda)\cup\Pi^4(\Lambda),$
which contradict the assumption that $I_o$ intersects only the dispensable sets.
\end{proof}

Finally, since $\Pi(\Lambda)$ is a dense subset of $\mathbb R,$ in view of Lemmas \ref{lemma8} and
\ref{lemma9}, the only possibility that any interval $J\subset I_o$ would intersect all
the dispensable sets $\Pi^{j^\ast}(\Lambda);~j=1,2,3.$ We claim that $\Pi^{2^\ast}(\Lambda)\cap I_o$
is dense in $I_o.$ Otherwise, there exists an interval $I\subset I_o$ such that $\Pi^{2^\ast}(\Lambda)\cap I=\varnothing.$
Then from (\ref{exp8}), we get $I\cap\Pi(\Lambda)\subset\left(\Pi^{1^\ast}(\Lambda)\cup\Pi^{3^\ast}(\Lambda)\right)$
which contradicts Lemma \ref{lemma9}. Hence by Lemma \ref{lemma6}, there exists an interval
$I'\subset I_o$ such that $I'$ is contained in $\Pi^{2}(\Lambda)\cup\Pi^3(\Lambda)\cup\Pi^4(\Lambda)$
which contradicts the assumption that $I_o$ intersects only the dispensable sets.

\bigskip

\noindent{\bf Concluding remarks:}\\

\noindent $\textbf{(a).}$ We observe a phenomenon of interlacing of
three totally disconnected disjoint dispensable sets ${\Pi^{{(3-j)}^\ast}(\Lambda)}:~j=0,1,2$
which are essentially derived from zero sets of four trigonometric
polynomials.

\smallskip

\noindent $\textbf{(b).}$  If the measure in question is supported on
an arbitrary number of parallel lines, then the size of the dispensable
sets would be larger. Indeed, the method used for the proof of Theorem
\ref{th17} would be highly implicit for a large number of parallel lines.
Since the dispensable sets are totally disconnected, it would be an
interesting question to analyze Heisenberg uniqueness pairs corresponding
to the finite number of parallel lines in terms of Hausdorff dimension of
the dispensable sets.
\smallskip

\noindent $\textbf{(c).}$ If we consider countably many parallel lines, then
whether the projection $\Pi(\Lambda)$ would be still relevant after deleting
the countably many dispensable sets, seems to be a reasonable question. We
leave these questions open for the time being.
\smallskip

\noindent $\textbf{(d).}$
For $p=3,$  in Lemma \ref{lemma7} we have used the fact that any symmetric polynomial in
$a,b,c$ can be expressed as a polynomial in $\tau_j;~j=0,1,2.$ This enables us to define
a function $\rho\in L_{loc}^{\Pi^3(\Lambda),\bar\xi},$ which is crucial in the proof of
Lemma \ref{lemma7}. However, for $p\geq4,$ the functions $\tau_j;~j=0,1,2$ appeared in
Equations (\ref{exp19}) are away from the elementary symmetric polynomials. If we could
identify the space of symmetric polynomials generated by $\tau_j;~j=0,1,2,$ then we can
think to modify the Lemma \ref{lemma7} in terms of $\Pi_{3^\ast}^p(\Lambda)$ that would
help in minimizing the size of the set $\widetilde\Pi(\Lambda).$ Hence a characterization
of $\Lambda$ for four lines problem might be obtained that would be closed to three lines
result. However, an exact analogue of three lines result for a large number of lines is
still open.

\bigskip

\noindent{\bf Acknowledgements:}\\

The authors would like to thank the referee for his/her fruitful suggestion.
The authors wish to thank E. K. Narayanan and Rama Rawat for several fruitful
discussions during preparation of this manuscript. The authors would also like
to gratefully acknowledge the support provided by IIT Guwahati, Government of India.

\bigskip


\end{document}